\DeclareSymbolFontAlphabet{\amsmathbb}{AMSb}%
\newcommand{\IP}{\amsmathbb{P}}
\newcommand{\R}{\amsmathbb{R}}
\newcommand{\N}{\amsmathbb{N}}
\newcommand{\I}{\mathbb{1}} %
\newcommand{\Z}{\amsmathbb{Z}}
\newcommand{\cA}{\mathcal{A}}
\newcommand{\cC}{\mathcal{C}}
\newcommand{\cD}{\mathcal{D}} 
\newcommand{\cE}{\mathcal{E}}
\newcommand{\cF}{\mathcal{F}}
\newcommand{\cH}{\mathcal{H}}
\newcommand{\cJ}{\mathcal{J}}
\newcommand{\cL}{\mathcal{L}}
\newcommand{\cS}{\mathcal{S}}
\newcommand{\cU}{\mathcal{U}}
\DeclareMathOperator{\E}{\amsmathbb{E}} %
\DeclareMathOperator{\Cov}{\mathsf{Cov}}
\DeclareMathOperator{\trace}{Tr}
\newcommand{\dd}{\,\mathrm{d}}
\newcommand{\dom}{\mathrm{dom}} %
\newcommand{\bigE}[1]{\E\left[#1\right]}
\newcommand{\inpro}[3][{}]{ \langle #2 , #3 \rangle_{#1} }
\newcommand{\norm}[2][{}]{\| #2 \|_{#1}}
\newcommand{\lrnorm}[2][{}]{\left\| #2 \right\|_{#1}}
\newcommand{\Bignorm}[2][{}]{\Big\| #2 \Big\|_{#1}}
\newtheorem{lemma}{Lemma}[section]
\newtheorem{proposition}[lemma]{Proposition}
\newtheorem{theorem}[lemma]{Theorem}
\theoremstyle{remark}
\newtheorem{remark}[lemma]{Remark}
\theoremstyle{definition}
\newtheorem{assumption}[lemma]{Assumption}
\begin{document}
	\title[Approximation of the HEIDIH model]{The heat modulated infinite dimensional Heston model and its numerical approximation}
	
	\author[F.E.~Benth]{Fred Espen Benth} \address[Fred Espen Benth]{\newline Department of Mathematics, University of Oslo.
		\newline Postboks 1053, Blindern, 0316 Oslo, Norway.} \email[]{fredb@math.uio.no}
	
	\author[G.~Lord]{Gabriel Lord} \address[Gabriel Lord]{\newline Department of Mathematics, IMAPP, Radboud University.
		\newline Postbus 9010, 6500 GL Nijmegen, The Netherlands.} \email[]{gabriel.lord@ru.nl}
	
	\author[G.~Di~Nunno]{Giulia Di Nunno} \address[Giulia Di Nunno]{\newline Department of Mathematics, University of Oslo.
		\newline Postboks 1053, Blindern, 0316 Oslo, Norway.
		\newline and
		\newline Department of Business and Management Science, NHH Norwegian School of Economics.
		\newline Helleveien 30, 5045 Bergen, Norway.}
	\email[]{giulian@math.uio.no}
	
	\author[A.~Petersson]{Andreas Petersson} \address[Andreas Petersson]{\newline Department of Mathematics, University of Oslo.
		\newline Postboks 1053, Blindern, 0316 Oslo, Norway.} \email[]{andreep@math.uio.no}
	
	\thanks{\textit{Acknowledgement.} The research leading to these results was funded within the project STORM: Stochastics for Time-Space Risk Models, from the Research Council of Norway (RCN), project number: 274410.}
	
	\subjclass{60H15, 60H35, 65M60, 46E22, 46N30}
	\keywords{stochastic partial differential equations, infinite dimensional Heston model, forward prices, stochastic heat equation, stochastic advection equation, reproducing kernel Hilbert spaces, finite element method}
	
	\begin{abstract}
		The HEat modulated Infinite DImensional Heston (HEIDIH) model and its numerical approximation are introduced and analyzed. This model falls into the general framework of infinite dimensional Heston stochastic volatility models of (F.E. Benth, I.C. Simonsen '18), introduced for the pricing of forward contracts. The HEIDIH model consists of a one-dimensional stochastic advection equation coupled with a stochastic volatility process, defined as a Cholesky-type decomposition of the tensor product of a Hilbert-space valued Ornstein-Uhlenbeck process, the mild solution to the stochastic heat equation on the real half-line. The advection and heat equations are driven by independent space-time Gaussian processes which are white in time and colored in space, with the latter covariance structure expressed by two different kernels. First, a class of weight-stationary kernels are given, under which regularity results for the HEIDIH model in fractional Sobolev spaces are formulated. In particular, the class includes weighted Mat\'ern kernels. Second, numerical approximation of the model is considered. An error decomposition formula, pointwise in space and time, for a finite-difference scheme is proven. For a special case, essentially sharp convergence rates are obtained when this is combined with a fully discrete finite element approximation of the stochastic heat equation. The analysis takes into account a localization error, a pointwise-in-space finite element discretization error and an error stemming from the noise being sampled pointwise in space. The rates obtained in the analysis are higher than what would be obtained using a standard Sobolev embedding technique. Numerical simulations illustrate the results.
	\end{abstract}
	
	\maketitle
	
	\section{Introduction}
	
	This paper introduces the HEat modulated Infinite DImensional Heston (HEIDIH) model as a special case of the infinite dimensional stochastic volatility model of~\cite{BS18} and study its numerical approximation. We give a background to the class of models considered in~\cite{BS18} before discussing our results in detail.
	
	\subsection{Stochastic volatility models in infinite dimensions}
	In the last few years, infinite dimensional stochastic volatility models have garnered increasing interest from both analytical (e.g., in \cite{BS18,CT20,CKK22A,CKK22B}),  financial (in \cite{BK15,BDNS21}), as well as most recently (see~\cite{BE22,K23}) also from numerical perspectives. One of the motivations stems from modeling the risk-neutral dynamics of the forward price $f(t,T)$ of a contract at time $t \ge 0$ delivering a commodity (e.g., an instantaneous amount of energy) at time $T > t$. Under a Musiela parametrization $X(t,x) := f(t,x+t)$ the dynamics are interpreted as solving a stochastic partial differential equation (SPDE) on $\R^+$. The solution is formally given by
	\begin{equation}
		\label{eq:spde-approach}
		X(t,x) := (\cS(t) X(0))(x) + \int^t_0 (\cS(t-s) \sigma(s))(x) \dd B(s,x), \quad t, x \in \R^+ \times \R^+,
	\end{equation}
	with initial value $X(0) := f(0,\cdot)$. Here $\cS$ is the left-shift semigroup on functions on $\R^+$, $(B(\cdot,x))_{x \ge 0}$ is a family of Brownian motions and $\sigma$ an appropriate stochastic volatility term. This is made rigorous by interpreting~\eqref{eq:spde-approach} as the mild solution of a stochastic evolution equation in a Hilbert space $\cH$ (in the Da Prato--Zabczyk framework \cite{DPZ14}) of continuous and eventually constant functions on $\R^+$, driven by a Wiener process $B$ in $\cH$. We emphasize that~\eqref{eq:spde-approach} models the \textit{risk-neutral} dynamics of forward prices. For the actual market dynamics one would need to include additional terms, such as a seasonality dependent drift. We refer to \cite{BK14,BK15} for more details on the infinite dimensional approach to forward price dynamics in commodity markets, as well as to \cite{HJM92,F01} for predecessor models in interest rate markets.
	
	Once the model~\eqref{eq:spde-approach} has been completely specified (by choosing $B$ and $\sigma$ appropriately), a natural next step is to analyze the numerical approximation of the forward problem. By this we mean the problem of simulating approximate realizations of~\eqref{eq:spde-approach} and quantifying the uncertainty stemming from the numerical approximation necessary in any infinite-dimensional model. This is important, not only for Monte Carlo approaches to pricing derivatives written on $f$, but also to efficiently solve inverse problems stemming from real-world data using approaches that rely on simulations of~\eqref{eq:spde-approach}, such as machine learning algorithms.
	
	\subsection{The HEIDIH model}
	In this paper, we present, analyze and study the numerical approximation of the HEIDIH model. It is a special case of the infinite dimensional stochastic volatility model introduced in~\cite{BS18,BDNS21}, in turn a special case of~\eqref{eq:spde-approach}. The price process $X$ and a volatility process $Y$ are coupled via another process $Z$ in the system of stochastic evolution equations
	\begin{equation}
		\label{eq:spde-system}
		\begin{dcases*}
			\dd X(t) = \cC X(t) \dd t + \Gamma^Z(t) \dd B(t), \quad X(0) \in \cH, \\
			\dd Y(t) = \cA Y(t) \dd t + \dd W(t), \quad Y(0) \in \cH,
		\end{dcases*}
	\end{equation}
	for $t \in [0,T], T < \infty$. Here $(\cH, \inpro[\cH]{\cdot}{\cdot})$ a Hilbert space of functions on $\R^+$ on which the semigroup $\cS$ is strongly continuous. With $\cU$ being another strongly continuous semigroup, $\cC := \partial/\partial x$ and $\cA$ denote the generators of $\cS$ and $\cU$, respectively. By $B$ and $W$ we denote two independent cylindrical Wiener processes in $\cH$. 
	The operator-valued mapping $t \mapsto \Gamma^Z(t)$ corresponds to $\sigma$ in~\eqref{eq:spde-approach} and is given by $\Gamma^Z(t) := Y(t) \otimes Z(t)$. Here, the stochastic process $Z$ takes values on the unit ball in $\cH$ and $\otimes$ denotes the Hilbert tensor product. The process $Z$ is an intrinsic part of the model and each choice yields different dynamics. One example is the setting $Z(t) := \eta$ for all $t \in [0,T]$, where $\eta$ is a deterministic function with unit norm. Another is obtained by letting $Z(t) := Y(t)/\norm[\cH]{Y(t)}$ for $Y(t) \neq 0$ and $1$ otherwise. In any case $\Gamma^Z(t) (\Gamma^Z(t))^* = Y(t) \otimes Y(t)$, where $(\Gamma^Z(t))^*$ is the adjoint of $\Gamma^Z(t)$, so that $\Gamma^Z(t)$ is a square root of the stochastic variance process $Y(t) \otimes Y(t)$ \cite[Proposition~7]{BS18}. This is why the system is referred to as a Heston stochastic volatility model in Hilbert space.
	
	The HEIDIH model is obtained by specifying $\cA = a\Delta$ in~\eqref{eq:spde-system}, which is the Laplacian $\Delta$ with either Dirichlet or Neumann boundary conditions at $x = 0$ coupled with a diffusivity constant $a > 0$. Then $Y$ is the solution to a stochastic heat equation. For the state space $\cH$ we adopt the setting of~\cite{ET07} and choose $\cH := \cH^r := H^r(\R^+) \oplus \R$ where $H^r(\R^+)$ is a fractional Sobolev space and $r > 1/2$.  Then $B$ and $W$ may be regarded as random field-valued processes. An example of a realization of $Y$ and $X$ for this model when $Z(t) := \eta$ for all $t \in [0,T]$ and fixed $\eta \in \cH$ is shown in Figures~\ref{subfig:Y} and~\ref{subfig:X}, see Remark~\ref{rem:example-figure} for precise parameter choices. 
	
	\begin{figure}[ht!]
		\centering
		\subfigure[Volatility process (stochastic heat equation) $Y = (Y(t,x))_{(t,x)\in [0,T] \times [0,2T]}$. \label{subfig:Y}]{\includegraphics[width = 0.99\textwidth]{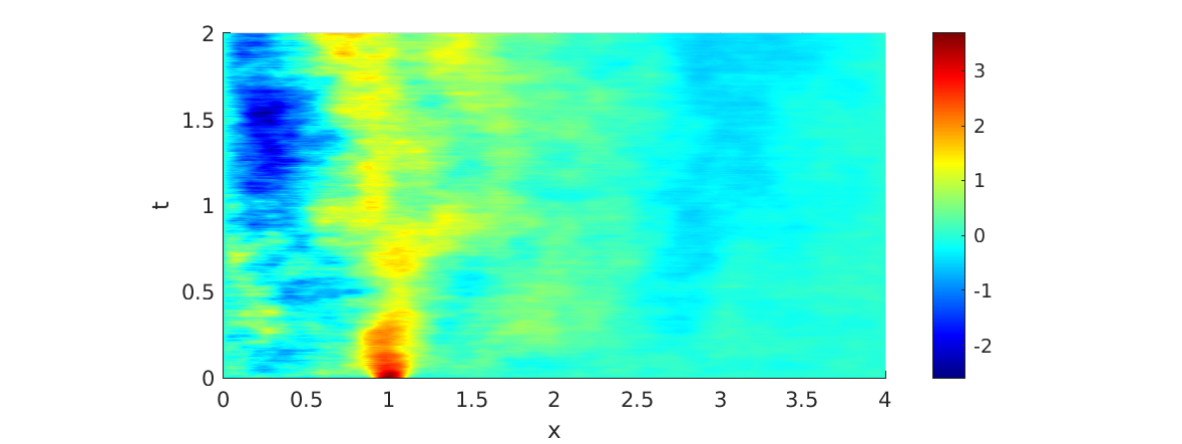}}
		\subfigure[Price process $X = (X(t,x))_{(t,x)\in [0,T]^2}$.\label{subfig:X}]{\includegraphics[width = .49\textwidth]{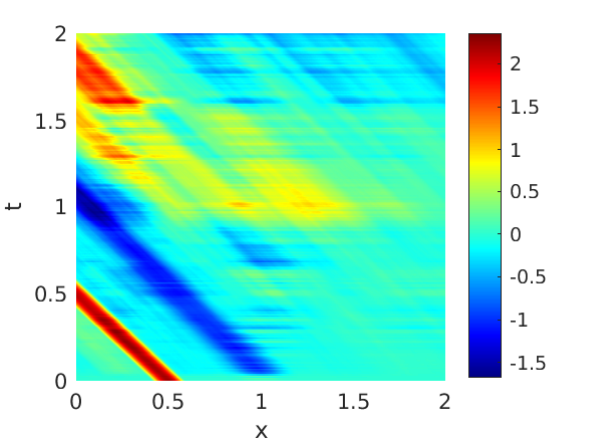}}
		\subfigure[Covariance function $(c_T(x,y))_{(x,y)\in [0,2T]^2}$, where $c_T(x,y) = \Cov(Y(T,x),Y(T,y))$. \label{subfig:cov}]{\includegraphics[width = .49\textwidth]{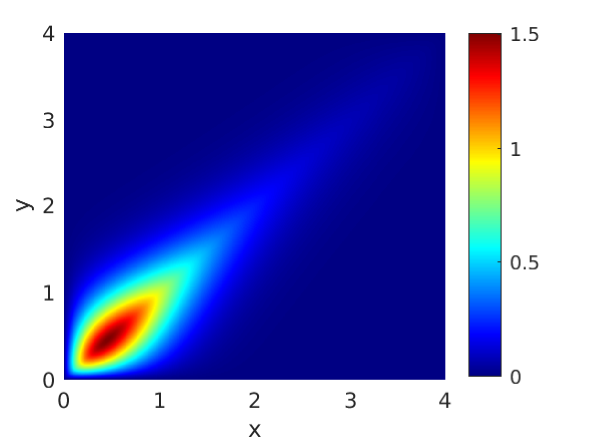}}
		\caption{Realization of the HEIDIH model $X$ in the case that $Z(t) = \eta \in \cH$ for $t \in [0,T] = [0,2]$, the corresponding realization of $Y$ and an illustration of the covariance function $(x,y) \mapsto \Cov(Y(T,x), Y(T,y))$. The process $Y$ has zero Dirichlet boundary conditions.}
		\label{fig:example}
	\end{figure}
	
	We emphasize that to analyze the numerical approximation of~\eqref{eq:spde-system} in detail, both the generator $\cA$ and the Hilbert space $\cH$ have to be completely specified. We believe the HEIDIH specification may be appropriate since 
	\begin{enumerate}[label=(\roman*)]
		\item the model directly displays the Samuelson effect. This says that the volatility should be decreasing in time to maturity (see, e.g., \cite{BBK08} for a discussion of this in a commodity pricing context) i.e., that $Y(t,x) \to 0$ as the time to maturity $x \to 0$. This is obtained from the fact that $Y(t) \in H^r(\R^+) \subset \cH^r$, see also Figure~\ref{subfig:Y}. Moreover,
		\item by properties of the heat kernel,  $\Cov(Y(s,x),Y(s,y)) \to 0$ as $|x-y| \to \infty$ (see Figure~\ref{subfig:cov}). The left-shift property of the semigroup $\cS$ implies that  
		\begin{equation*}
			X(t,x) = X(0,x + t) + \int^t_0 Y(s,x+s) \inpro[\cH]{Z(s)}{\cdot} \dd B(s).
		\end{equation*}
		The dependence of the volatility of $X$ with regards to time to maturity is therefore local. This is realistic, since contracts close in maturity affect each other. Furthermore,
		\item freedom is still left to the practitioner to specify the model further given the specific context at hand. This is done in a parsimonious way by appropriately choosing the coefficient $a$ (which regulates the degree to which the dependence of the volatility with regards to time to maturity is local) the initial functions $X(0)$ and $Y(0)$ and the dynamics of $B$ and $W$ in terms of their covariance kernels. Finally, 
		\item precise regularity results and approximation convergence rates can be obtained under realistic conditions on the covariance kernels of $B$ and $W$, which is vital to designing an efficient simulation algorithm.
	\end{enumerate}
	
	The latter point is the main focus of our paper, which contains, to the best of our knowledge, \textit{the first analysis of the numerical approximation} of the infinite dimensional Heston model introduced in~\cite{BS18,BDNS21}. Note that the model may easily be modified by adding additional terms to either of the equations in~\eqref{eq:spde-system} without violating the validity of the points above. For instance, one may add a deterministic drift term to $Y$ corresponding to seasonal volatility, which might be present even under the risk-neutral measure. 
	
	\subsection{Contributions of this paper}
	Having introduced the model, let us now enumerate the main contributions of this paper.
	
	In the context of forward pricing, the properties of $B$ are often discussed in terms of its (incremental) covariance kernel $q(x,y) = \Cov(B(1,x),B(1,y))$. At the same time, it is common to assume that $B$ is a $Q$-Wiener process, see, e.g., \cite{BB14,BS18,BDNS21}. It is, however, rare to find any discussion on what properties a given kernel $q$ should have in order for $B$ (or $W$) to be well-defined in $\cH$ either as a cylindrical or as a $Q$-Wiener process, thus allowing for the existence of $X(t)$ in a given Hilbert space $\cH$. Thanks to our particular choice of state space $\cH$ from~\cite{ET07}, we are able to 
	\begin{enumerate}[label=(\roman*)]
		\item \textit{completely characterize} the kernels that yield classical $Q$-Wiener processes in $\cH$ (Theorem~\ref{thm:wiener-covariance-functions}) and
		\item \textit{construct a class of kernels}, which we call weight-stationary kernels, that allow for precise regularity and sharp numerical convergence results for $X$ and $Y$ (Theorem~\ref{thm:noise-kernel}).
	\end{enumerate}
	The constructed class includes weighted versions of kernels commonly encountered in applications, such as Mat\'ern kernels.
	We believe these general ideas are of interest also for infinite-dimensional models outside the context of forward price modeling in commodity markets. Note that no results of this nature are present in~\cite{ET07}. Since the underlying spatial domain $\R^+$ is unbounded, non-trivial arguments are needed to derive these theorems, including the use of results from~\cite{HT94} on the decay of entropy numbers of embeddings between weighted Besov-type spaces. 
	
	When analyzing a numerical approximation of the HEIDIH model, the most natural errors to analyze are pointwise in space, since $X(t,\cdot)$ evaluated at $x = T-t$ corresponds to the forward price of a contract with maturity time $T$. Like most algorithms in the numerical analysis literature on SPDEs, the convergence rates of our particular approximation depend on the regularity of the parts making up the model, in particular on $B$ and $W$. Whereas Sobolev regularity (as well as existence and uniqueness) of $Y$ and $X$ follow from classical results of~\cite{DPZ14} and~\cite{JR12} when weight-stationary kernels are considered for $B$ and $W$, it turns out that this is not enough for pointwise-in-space. Instead, for sharp numerical results, we have to derive
	\begin{enumerate}[label=(\roman*), resume]
		\item \textit{spatiotemporal regularity} estimates of $Y$ in a H\"older sense (Proposition~\ref{prop:holder}) applicable to weight-stationary kernels.
	\end{enumerate}
	These estimates cannot be obtained from classical results via a Sobolev embedding technique.
	
	Having studied the regularity of the model, the component $X$ is approximated by means of a fully discrete finite difference scheme. We supply
	\begin{enumerate}[label=(\roman*), resume]
		\item an \textit{error decomposition formula for the approximation of $X$}  (Proposition~\ref{prop:X-error-decomp}), applicable to general infinite dimensional Heston models in the sense of~\cite{BS18}. 
	\end{enumerate}
	The resulting error depends on how well the process $Y$ can be approximated. We then focus on approximation of $Y$ in the special case of Dirichlet boundary conditions and $Z(t) = \eta$. A backward Euler discretization in time is combined with a finite element approximation in space. While this particular discretization of the stochastic heat equation is well-studied (see Section~\ref{sec:conclusion} for a comparison to existing results), the situation in our case stands out for three reasons. First, we need to take into account a truncation of our unbounded domain $\R^+$, resulting in localization errors. Second, we consider pointwise-in-space errors, which means we cannot rely on standard estimates for the finite element method. Third, we need to simulate the noise $W$ efficiently using the circulant embedding method, which results in an additional source of errors, since the noise is interpolated rather than projected onto the finite element space. With this in mind, we
	\begin{enumerate}[label=(\roman*), resume]
		\item \textit{derive sharp error convergence estimates for the approximation of $Y$}  (Proposition~\ref{prop:fem-convergence}) which shows how the error depends on localization, SPDE discretization and noise interpolation error sources. 
	\end{enumerate}
	The sharpness is demonstrated in simulations (Figures~\ref{fig:spatially-semidiscrete-errors} and~\ref{fig:temporally-semidiscrete-errors}). Combining Propositions~\ref{prop:X-error-decomp} and~\ref{prop:fem-convergence}, we then finally
	\begin{enumerate}[label=(\roman*), resume]
		\item obtain a \textit{full error estimate for the approximation of $X$}  (Theorem~\ref{thm:full-approximation}).
	\end{enumerate}
	This estimate states how the error for the approximation of $X$ depends on the quality of the approximation of $Y$. Not only is this relevant from a mathematical point of view, but has practical implications too. The explicit dependence of the error on the different sources outlined above informs practitioners how the computational effort should be divided so that each source contributes equally. This leads to a computationally efficient algorithm. We demonstrate this point in simulations using different choices of spatial and temporal step sizes for $Y$ (Figures~\ref{fig:temporalerror-comparison-plot} and~\ref{fig:comparison-times}). Our earlier regularity theory reveals how the different step sizes should be tuned for computational efficiency, connecting theory to practice.
	
	\subsection{Outline}
	
	We end this section with a brief outline of this paper. In Section~\ref{sec:preliminaries} we introduce the necessary mathematical background and notation. Section~\ref{sec:wiener-processes} contains our results on cylindrical Wiener processes in $\cH$, followed by the regularity theory for the HEIDIH model in Section~\ref{sec:regularity}. Our numerical analysis along with accompanying simulations are contained in Section~\ref{sec:numerical}. In Section~\ref{sec:conclusion} we discuss how our results relate to the existing literature for numerical approximations of SPDEs and outline future work.

	\section{Preliminaries}
	\label{sec:preliminaries}
	
	This section briefly presents the mathematical machinery necessary for our results.
	
	\subsection{Notation and operator theory}
	
	Let $(U, \norm[U]{\cdot})$ and $(V, \norm[V]{\cdot})$ be Banach spaces. All Banach spaces in this paper are taken over $\R$ unless otherwise stated. We denote by $\cL(U,V)$ the space of bounded linear operators from $U$ to $V$ equipped with the usual operator norm. If $U$ and $V$ are separable Hilbert spaces we write $\cL_2(U,V)$ for the space  Hilbert--Schmidt operators. This is a separable Hilbert space with an inner product, for an arbitrary ONB (orthonormal basis) $(e_j)_{j = 1}^\infty$ of $U$, given by
	$$\inpro[\cL_2(U,V)]{\Gamma_1}{\Gamma_2} := \sum_{j = 1}^{\infty} \inpro[V]{\Gamma_1 e_j}{\Gamma_2 e_j}\text{ for }\Gamma_1, \Gamma_2 \in \cL_2(U,V).$$
	We have $\Gamma \in \cL_2(U,V)$ if and only if $\Gamma^* \in \cL_2(V,U)$ and their norms coincide. We use the shorthand notations $\cL(U)$ and $\cL_2(U)$ when $V=U$ and we write $\Sigma^+(U)$ for the class of positive semidefinite operators when $U$ is a Hilbert space. For operators $\Gamma \in \Sigma^+(U)$, we say that they are trace class if $\trace(\Gamma) : = \sum^\infty_{j=1} \inpro[]{\Gamma e_j}{e_j} = \norm[\cL_2(U)]{\Gamma^{1/2}}^2 < \infty$ for one, equivalently all, ONBs $(e_j)_{j = 1}^\infty$ of $U$. 
	
	For Hilbert spaces $U$ and $V$, the tensor $u \otimes v$ is regarded as an element of $\cL(V,U)$ by the relation $(u \otimes v) w := \inpro[V]{v}{w} u$ for $v,w \in V$ and $u \in U$.
	
	We need the concepts of approximation and entropy numbers from~\cite{P80}. Both types of numbers are used to derive suitable properties of a class of covariance kernels in Theorem~\ref{thm:noise-kernel}. Approximation numbers also comes into the numerical analysis of the stochastic heat equation approximation in Proposition~\ref{prop:fem-convergence}. Given a linear operator $\Gamma$ between Banach spaces $U$ and $V$, its $n$th approximation number $\psi_n(\Gamma)$ is given by 
	\begin{equation*}
		\psi_n(\Gamma) := \inf\Big\{ \norm{\Gamma - \tilde \Gamma} : \tilde \Gamma \in \cL(U,V),  \text{rank}(\tilde \Gamma) < n \Big\},
	\end{equation*}
	$n \in \N$, while its $n$th entropy number $\epsilon_n(\Gamma)$ is 
	\begin{equation*}
		\epsilon_n(\Gamma) := \inf\Big\{ \delta \ge 0 : \Gamma(B_U) \subset \cup_{j = 1}^m \{v_j + \delta B_V \} \text{ for some } v_1, \ldots, v_m \in V \text{ and } m \le 2^{n-1} \Big\}.
	\end{equation*}
	Here $B_U$ and $B_V$ denote the unit spheres in $U$ and $V$, respectively. By 
	\cite[Theorems~11.2.3, 12.1.3]{P80}, both types of numbers satisfy
	\begin{equation}
		\label{eq:operator-norm-property}
		\epsilon_{1}(\Gamma) = \psi_1(\Gamma) = \norm[\cL(U,V)]{\Gamma}.
	\end{equation}
	Both numbers are also multiplicative in that, if $\Gamma_1 \in \cL(U,V)$ and $\Gamma_2 \in \cL(\tilde V,V)$ for some additional Banach space $\tilde V$, then 
	\begin{equation}
		\label{eq:multiplicative-property}
		\epsilon_{m+n-1}(\Gamma_1 \Gamma_2) \le \epsilon_m(\Gamma_1) \epsilon_n(\Gamma_2) \text{ and } \psi_{m+n-1}(\Gamma_1 \Gamma_2) \le \psi_m(\Gamma_1) \psi_n(\Gamma_2)
	\end{equation}
	for all $m,n \in \N$ \cite[Theorems~11.9.2 and~12.1.5]{P80}. 
	If $U$ and $V$ are Hilbert spaces, then
	\begin{equation}
		\label{eq:approximation-bound-by-entropy}
		\psi_n(\Gamma) \le 2 \epsilon_n(\Gamma) 
	\end{equation}
	for all $n \in \N$ and
	\begin{equation}
		\label{eq:hs-norm-approximation-numbers}
		\norm[\cL_2(U,V)]{\Gamma} = \norm[\ell^2]{(\psi_n(\Gamma))_{n=1}^\infty},
	\end{equation}
	where $\ell^p$ denotes the usual space of $p$-summable sequences \cite[Theorems~11.3.4, 12.3.1 and 15.5.5]{P80}. Taking~\eqref{eq:operator-norm-property} and~\eqref{eq:multiplicative-property} into account, this means in particular that $\cL_2(U,V)$ is an operator ideal. 
	
	For two Banach spaces $U$ and $V$, we write $U \hookrightarrow V$ if $U \subset V$ and $\norm[V]{u} \le C \norm[U]{u}$ for some constant $C < \infty$ and all $u \in U$, i.e., the embedding operator $I_{U \hookrightarrow V} \in \cL(U,V)$. For Hilbert spaces, we write $U  \xhookrightarrow[]{\cL_2} V$ as shorthand for $I_{U \hookrightarrow V} \in \cL_2(U,V)$
	
	Throughout this paper, we adopt the notion of generic constants, which may vary from occurrence to occurrence and are independent of any parameter of interest, such as spatial or temporal step sizes. By $a \lesssim b$, for $a,b \in \R$, we denote the existence of a generic constant such that $a \le C b$. 
	
	\subsection{Reproducing kernel Hilbert spaces}
	
	We make heavy use of the theory of reproducing kernel Hilbert spaces (RKHSs). The properties that we are going to need are listed here, we refer to, e.g., \cite{BT04, W04} for further details. Throughout the paper we consider symmetric positive semidefinite kernels $q$ on a non-empty subset $\cE \subseteq \R$ but usually just refer to $q$ as a kernel on $\cE$ when there is no risk of confusion. A Hilbert space $H = H_q(\cE)$ is said to be the RKHS of a kernel $q$ if it is a Hilbert space of real-valued functions on a non-empty index set $\cE \subseteq \R$ such that the conditions 
	\begin{enumerate}[label=(\roman*)]
		\item \label{RKHSprop1} $q(x,\cdot) \in H_q(\cE)$ for all $x \in \cE$ and
		\item \label{RKHSprop2} for all $f \in H_q(\cE), x \in \cE$, $f(x)=\inpro[H_q(\cE)]{f}{q(x,\cdot)}$
	\end{enumerate}
	are satisfied. The property~\ref{RKHSprop2} is referred to as the reproducing kernel property of the space $H_q(\cE)$. For each kernel $q$ there is one and only one Hilbert space of functions on $\cE$ with these two properties. Moreover, a Hilbert space $H$ of functions on $\cE$ is a RKHS if and only if the evaluation functional $\delta_x \colon H \to \R$, defined by $\delta_x f = f(x)$, is continuous for all $x \in \cE$ \cite[Theorem~10.2]{W04}. We write $H_q$ for $H_q(\cE)$ when it is clear from the context what index set $\cE$ we have in mind. Since $q$ is positive semidefinite, $|q(x,y)|^2 \le q(x,x) q(y,y)$ for all $x,y \in \cE$. 
	
	If $H_q$ is separable with an ONB $(e_j)_{j=1}^\infty$,  \cite[Theorem~14]{BT04} yields the kernel decomposition $q(x,\cdot) = \sum^\infty_{j=1} e_j(x)e_j$, the sum being convergent in $H_q$. This implies that $q(x,y) = \sum^\infty_{j=1} e_j(x)e_j(y)$ with convergence in $\R$. For index sets $\cE$ that are domains in $\R$, continuity of $q$ is a sufficient condition for separability \cite[Theorem~15]{BT04}.
	
	Suppose that we are given two RKHSs on an index set $\cE \subseteq \R$ with kernels $q$ and $\tilde q$. Then $H_{\tilde q} \hookrightarrow H_q$ if and only if there is some constant $C \ge 0$ such that $C q - \tilde q$ is a positive semidefinite kernel \cite[Theorem~I.13.IV, Corollary~I.13.IV.2]{A50}. We express this by writing $q \gtrsim \tilde q$. 
	
	For any Hilbert spaces $U, H$ and operator $\Gamma \in \cL(U,H)$, we may interpret the range $\Gamma(U)$ as a Hilbert space when equipped with the inner product $\inpro[\Gamma(U)]{\cdot}{\cdot} := \inpro[U]{\Gamma^{-1}\cdot}{\Gamma^{-1}\cdot}$, where $\Gamma^{-1}$ is the pseudoinverse of $\Gamma$ \cite[Proposition~C.0.3]{PR07}. The norm on $\Gamma(U)$ may also be represented by 
	\begin{equation*}
		\norm[\Gamma(U)]{u} = \min_{\substack{v \in U \\ \Gamma v = u}} \norm[U]{v},
	\end{equation*}
	see \cite[Remark~C.0.2]{PR07}. Applying this to a RKHS $H_q(\cE)$ and the operator $R_{\cE \to \cD}$ that restricts functions on $\cE$ to functions on a subset $\cD \subset \cE$, it follows from \cite[Theorem~6]{BT04} that $R_{\cE \to \cD}$ maps $H_q(\cE)$ to $H_q(\cD)$ and $H_q(\cD) = R_{\cE \to \cD}(H_q(\cE))$ with equal norms. Moreover, by~\cite[Theorem~10.46]{W04} there is a linear extension operator $E_{\cD \to \cE}$ of functions on $\cD$ to functions on $\cE$ such that $\norm[\cL(H_q(\cD),H_q(\cE))]{E_{\cD \to \cE}} = 1$. %
	
	\subsection{Fractional Sobolev spaces}
	
	For a possibly unbounded $\cE \subset \R$, we write $L^p(\cE)$, $p \in [0,\infty]$, for the usual Banach space of $p$-integrable equivalence classes of functions with respect to the Lebesgue measure. We omit $\cE$ when it is clear from the context which domain we refer to. The fractional Sobolev space $H^r(\R)$, $r \ge 0$, is a separable Hilbert space and a generalization of the usual Sobolev space $H^k(\R) = W^{k,2}(\R)$, $k \in \N$. It consists of all $u \in L^2(\R)$ such that 
	\begin{equation*}
		\norm[H^r(\R)]{u}^2 := (2 \pi)^{-1/2} \int_{\R} |\hat{u}(\xi)|^2 (1 + |\xi|^2)^r \dd \xi < \infty,
	\end{equation*} 
	where $\hat{u}$ is the Fourier transform of $u \in L^2(\R)$. For $\cE \subset \R$, the fractional Sobolev space $H^r(\cE)$, $r \ge 0$, consists of all $u \in L^2(\cE)$ for which $u = \tilde u|_{\cE}$ for some $\tilde u \in H^r(\R)$. It is equipped with the norm
	\begin{equation*}
		\norm[H^r(\cE)]{u} := \min_{\substack{v \in H^r(\R) \\ v|_{\cE}=u}} \norm[H^r(\R)]{v}.
	\end{equation*} 
	Since $\cE$ is an interval, $H^r(\cE)$ has a bounded extension operator. It therefore coincides with the usual Sobolev space $W^{r,2}(\cE)$ with equivalent norms. We refer to \cite[Chapter~VI]{S70} for more details on this. In terms of the restriction operator $R_{\R \to \cE}$, we have $H^r(\cE) = R_{\R \to \cE}(H^r(\R))$ with equal norms. We again omit $\cE$ from $H^r(\cE)$ when it is clear from the context which domain we refer to. 
	
	If $r > 1/2$, then $H^r(\cE)$, $\cE \subseteq \R$, is a RKHS with a stationary (that is, $m_r(x,y) = m_r(x-y)$ for $x,y \in \R$) kernel given by 
	\begin{equation*}
		m_r(x) := 2^{1 - r}(\Gamma(r))^{-1} |x|^{r-1/2} K_{r-1/2}(|x|),
	\end{equation*}
	see \cite[Chapter~10]{W04}. Here $K_{r-1/2}$ denotes the modified Bessel function of the second kind and order $r-1/2$. Note that $m_r$ is continuous and bounded on $\R$. By the reproducing property, we obtain that all functions $f \in H^r(\cE)$ are continuous and bounded, which is a special case of the classical Sobolev embedding theorem.
	
	We also make use of the weighted spaces $H^{r,\alpha} := H^{r,\alpha}(\R)$ where $r, \alpha \ge 0$. For these we introduce the weight function $w_\alpha$ given by $w_\alpha(x) := (1 + x^2)^{\alpha}$ for $x, \alpha \in \R$. We set $H^{r,\alpha}(\R) := \{u \in L^2(\R) : x \mapsto w_\alpha(x) u(x) \in H^r(\R)\}$. These are special cases of the weighted spaces considered in~\cite{HT94}, which we refer to for further details, see also~\cite[Theorems~2.3.9 and~2.5.6]{T83}. They are Hilbert spaces equipped with the inner product $\inpro[H^{r,\alpha}]{u}{v} := \inpro[H^{r}]{w_\alpha u}{w_\alpha v}$. For $s \ge r, \beta \ge \alpha$, $H^{s,\beta} \hookrightarrow H^{r,\alpha}$ with the embedding being compact if and only if both inequalities are strict and $\beta > 0$ \cite[Theorem~2.3]{HT94}. For $r > 1/2$, we note that $w_{-\alpha}(x) m_r(x-\cdot) w_{-\alpha}(\cdot) \in H^{r,\alpha}$ for all $x \in \R$ and that for $f \in H^{r,\alpha}$, 
	\begin{equation*}
		w_\alpha(x) f(x) = \inpro[H^r]{w_\alpha f}{m_r(x-\cdot)} = \inpro[H^{r,\alpha}]{f}{w_{-\alpha} m_r(x-\cdot)}, \, x \in \R.
	\end{equation*}
	From this, we obtain that $H^{r,\alpha}$ is a RKHS with kernel $(x,y) \mapsto w_{-\alpha}(x) m_r(x-y) w_{-\alpha}(y)$.
	
	The state space that we will work with for the HEIDIH model is given by $\cH^r := H^r(\R^+) \oplus \R$, $r > 1/2$. We define it for $r \ge 0$. It consists of all $f = \tilde f + b$ where $\tilde f \in H^r(\R^+)$ and $b \in \R$. It is equipped with the norm 
	\begin{equation*}
		\norm[\cH^r]{f}^2 := \norm[H^r(\R^+)]{\tilde f}^2 + b^2.
	\end{equation*}
	When $r > 1/2$, it is a RKHS with a kernel given by $(x,y) \mapsto 1 + m_r(x-y)$. We consider $H^r$ as a subspace of $\cH^r$ by identifying it with $H^r + 0$.
	
	\section{Cylindrical Wiener processes in fractional Sobolev spaces}
	\label{sec:wiener-processes}
	
	In this section, we recall the concept of cylindrical Wiener processes with a focus on processes that have a covariance kernel. We prove a result that completely characterizes $Q$-Wiener processes in the state space $\cH^r$. Then, we outline the assumptions on the kernels $q_W$ and $q_B$ that are made in the forthcoming sections. Finally, we construct the class of weight-stationary kernels, that fulfill all the assumptions made.
	
	Consider $T < \infty$, a filtered probability space $(\Omega,\cF,(\cF_t)_{t \in [0,T]},\IP)$ fulfilling the usual conditions and a Hilbert space $H$. 
	We follow~\cite{R11} and define a (strongly) cylindrical Wiener process as a process $W \colon [0,T] \to \cL(H,L^2(\Omega,\R))$ such that $t \mapsto W(t)u$ is a real-valued Wiener process for all $u \in H$. This definition agrees with what is called a generalized Wiener process in~\cite{DPZ14}. 
	
	Given a separable RKHS $H_q \hookrightarrow H$ with ONB $(e_j)_{j=1}^\infty$ and a sequence $(\beta_j)_{j=1}^\infty$ of real-valued Wiener processes, we construct a cylindrical Wiener process $W$ in $H$ by
	\begin{equation}
		\label{eq:wiener-construction}
		W(t)u := \sum^\infty_{j=1} \beta_j(t) \inpro[H]{u}{e_j}, \, t \in [0,T], \, u \in H,
	\end{equation}
	which converges in $L^2(\Omega,\R)$, see~\cite[Remark~7.3]{R11}. Note that $\beta_j = W(\cdot)e_j$ for $j \in \N$. The representation does not depend on $(e_j)_{j=1}^\infty$ in the sense that if $(f_j)_{j=1}^\infty$ is another ONB of $H_q$ and we define a real-valued Wiener processes $\tilde \beta$ by $\tilde \beta_j = W(\cdot)f_j$, then 
	\begin{equation*}
		W(t)u = \sum^\infty_{j=1} \tilde \beta_j(t) \inpro[H]{u}{f_j}, \, t \in [0,T], \, u \in H, \, \IP\text{-a.s.}
	\end{equation*}
	If $U$ is another Hilbert space such that $H_q \hookrightarrow U$, we obtain a cylindrical Wiener process in  $U$ by replacing $\inpro[H]{u}{e_j}$ in~\eqref{eq:wiener-construction} with $\inpro[U]{u}{e_j}$. In $H$, $W$ will have an incremental covariance operator $Q \in \Sigma^+(H)$ given by $Q = I_{H_q \hookrightarrow H} I^*_{H_q \hookrightarrow H}$ in the sense that $\E[(W(1)u)(W(1)v)] = \inpro[H]{Q u}{v}$ for all $u,v \in H$. Similarly, $W$ has covariance $I_{H_q \hookrightarrow U} I^*_{H_q \hookrightarrow U}$ in $U$. These operators have unique positive square roots. By~\cite[Corollary~B.3]{DPZ14}, $(I_{H_q \hookrightarrow H} I^*_{H_q \hookrightarrow H})^{1/2}(H) = (I_{H_q \hookrightarrow U} I^*_{H_q \hookrightarrow U})^{1/2}(U) = H_q$. In this sense, the distribution of $W$ does not depend on the choice of $H$ in~\eqref{eq:wiener-construction}. Therefore, we are justified in calling $W$ a cylindrical Wiener process with kernel $q$ and need not specify in which Hilbert space we consider it. 
	
	The SPDEs we consider in the paper are built on stochastic integrals with respect to cylindrical Wiener processes $W$. Consider a predictable process $\Psi \colon [0,T] \to \cL_2(H_q,H)$. It\^o integrals taking values in $H$ are well-defined with 
	\begin{equation*}
		\E\left[\lrnorm[H]{\int_{0}^{t} \Psi(s) \dd W(s)}^2\right] = \int_{0}^{t} \E[\norm[\cL_2(H_q,H)]{\Psi(s)}^2] \dd s,
	\end{equation*}
	for $t \in [0,T]$, provided that the integral on the right hand side is finite. We refer to \cite{R11} and \cite[Section~4.2]{DPZ14} for more details on Wiener processes in Hilbert spaces and the It\^o integral.
	
	If $H_q \xhookrightarrow[]{\cL_2} H$, the sum 
	\begin{equation*}
		W(t) := \sum^\infty_{j=1} \beta_j(t) e_j, \, t \in [0,T],
	\end{equation*}
	converges in $L^2(\Omega,H)$. It is then called a $Q$-Wiener process, and its covariance operator $Q = I_{H_q \hookrightarrow H} I^*_{H_q \hookrightarrow H}$ is trace-class. It induces a cylindrical Wiener process by $W(t) u := \inpro{W(t)}{u}$ and we do not make a notational difference between the two concepts. 
	Many papers dealing with SPDE models for forward prices assume that $W$ is a $Q$-Wiener process, see, e.g., \cite{BDNS21,BS18}. It is therefore important to clarify when this is the case in our setting with the state space $\cH^r$. The following theorem completely characterizes the kernels $q$ that satisfy $H_q \xhookrightarrow[]{\cL_2} \cH^r$, so that $W$ is a $Q$-Wiener process in $\cH^r$.
	
	\begin{theorem}
		\label{thm:wiener-covariance-functions}
		There is a separable RKHS $H_q = H_q(\R^+)$ such that $H_q \xhookrightarrow[]{\cL_2} \cH^r$ if and only if there is a separable RKHS $H_{\tilde q} = H_{\tilde q}(\R^+)$ such that $H_{\tilde q} \xhookrightarrow[]{\cL_2} H^r$, a function $f \in H_{\tilde q}$ and a constant $c \ge 0$ with
		\begin{equation*}
			q(x,y) = \inpro[H_{\tilde{q}}]{\tilde{q}(x,\cdot) + f}{\tilde{q}(y,\cdot) + f} + c = \tilde{q}(x,y) + f(x) + f(y) + \norm[\tilde{H}_q]{f}^2 + c
		\end{equation*}
		for all $x, y \in \R^+$. 
		
		If this holds for $r \ge 0$, then the cylindrical Wiener process $W$ with kernel $q$ is an $\cH^r$-valued $Q$-Wiener process and fulfills
		\begin{align*}
			&\E[\inpro[\cH^0]{W(1)}{u}\inpro[\cH^0]{W(1)}{v}] \\
			&\quad= \int_{\R^+ \times \R^+} \tilde q(x,y) \tilde u(x) \tilde v(y) \dd x \dd y + \int_{\R^+} a_u \tilde v(x) f(x) + a_v \tilde u (x) f(x) \dd x + a_u a_v (\norm[H_{\tilde q}]{f}^2 + c)
		\end{align*} 
		for all $u = \tilde u + a_u, v = \tilde v + a_v \in \cH^r = H^r \oplus \R$.
		
		If this holds for $r > 1/2$, then $x \mapsto W(t,x)$ is a mean square continuous and bounded random field for all $t \in [0,T]$ and $\E[W(1,x)W(1,y)] = q(x,y)$
		for all $x, y \in \R^+$.	
		
	\end{theorem}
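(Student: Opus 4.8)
\emph{Strategy.} The plan is to reduce every claim to properties of the inclusion $I := I_{H_q \hookrightarrow \cH^r}$ together with the orthogonal splitting $\cH^r = H^r \oplus \R$, writing $P$ for the orthogonal projection onto $H^r$. The one identity I would use throughout is that, for any RKHS $H$ with kernel $\kappa_H$ and any RKHS inclusion $H_q \hookrightarrow H$, one has $I^*_{H_q \hookrightarrow H}\,\kappa_H(x,\cdot) = q(x,\cdot)$; this is immediate from the reproducing properties, since $\inpro[H_q]{I^*_{H_q \hookrightarrow H}\kappa_H(x,\cdot)}{g} = \inpro[H]{\kappa_H(x,\cdot)}{g} = g(x) = \inpro[H_q]{g}{q(x,\cdot)}$ for $g \in H_q$. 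For $r > 1/2$ this applies with $H = \cH^r$ and $\kappa_H(x,\cdot) = 1 + m_r(x-\cdot) =: k(x,\cdot)$.

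\emph{The characterization.} For the forward implication, assume $H_q \xhookrightarrow[]{\cL_2} \cH^r$; since $\cL_2$ is an operator ideal, $PI \in \cL_2(H_q, H^r)$. I would first identify $\ker(PI)$ as the set of constant functions lying in $H_q$, so it is either $\{0\}$ or spanned by a single (normalized) constant function $h$ of value $\kappa$; set $c := \kappa^2$ (or $c := 0$ in the trivial case). Next, equip $H_{\tilde q} := PI(H_q)$ with the range norm: since $g \mapsto (PIg)(x) = g(x) - (\text{constant part of } g)$ is bounded on $H_q$ and vanishes on $\ker(PI)$, it descends to a bounded evaluation functional on $H_{\tilde q} \cong H_q \ominus \ker(PI)$, so $H_{\tilde q}$ is a separable RKHS, say with kernel $\tilde q$, and, via the same isometry, the inclusion $H_{\tilde q} \hookrightarrow H^r$ is $PI$ restricted to $\ker(PI)^\perp$, hence Hilbert--Schmidt. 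I would then show, via the reproducing properties of $H_q$ and $H_{\tilde q}$, that $PI\,q(x,\cdot) = \tilde q(x,\cdot) + f$, where $f := PI\phi$ and $\phi \in H_q$ is the Riesz representative of the constant-part functional. Decomposing $q(x,\cdot)$ along $\ker(PI) \oplus \ker(PI)^\perp$, observing that $P_{\ker(PI)}q(x,\cdot) = \kappa h$ does not depend on $x$, and using that $PI$ is isometric on $\ker(PI)^\perp$, yields $q(x,y) = \inpro[H_{\tilde q}]{\tilde q(x,\cdot)+f}{\tilde q(y,\cdot)+f} + c$, and expanding the inner product with the reproducing property of $H_{\tilde q}$ gives the second displayed form. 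For the converse, given $\tilde q, f, c$ as stated, $q$ is a symmetric positive semidefinite kernel with feature map $\Phi(x) := (\tilde q(x,\cdot)+f,\ \sqrt c) \in H_{\tilde q}\oplus\R$, i.e. $q(x,y) = \inpro[H_{\tilde q}\oplus\R]{\Phi(x)}{\Phi(y)}$. By the feature-map/range description of RKHSs, $H_q$ is the range, with the range norm, of $w \mapsto \inpro[H_{\tilde q}\oplus\R]{\Phi(\cdot)}{w}$; since this function equals $v(\cdot) + (\inpro[H_{\tilde q}]{f}{v}+a\sqrt c)$ for $w=(v,a)$, it lies in $\cH^r$, so the relevant map is $T \colon H_{\tilde q}\oplus\R \to \cH^r$, $T(v,a) := (I_{H_{\tilde q}\hookrightarrow H^r}v,\ \inpro[H_{\tilde q}]{f}{v}+a\sqrt c)$. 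Then $I_{H_q \hookrightarrow \cH^r}$ is, up to the isometry $H_q \cong (H_{\tilde q}\oplus\R)\ominus\ker T$, the restriction of $T$ to $(\ker T)^\perp$; as $I_{H_{\tilde q}\hookrightarrow H^r}$ is Hilbert--Schmidt so is $(v,a)\mapsto(I_{H_{\tilde q}\hookrightarrow H^r}v,0)$, and $T$ differs from it by a rank-one operator, whence $T\in\cL_2$ and $H_q \xhookrightarrow[]{\cL_2}\cH^r$; separability of $H_q$ is inherited. (For $c=0$, drop the $\R$-summand throughout.)

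\emph{$Q$-Wiener property, covariance, random field.} Once $H_q \xhookrightarrow[]{\cL_2}\cH^r$ with $r\ge 0$, convergence of $\sum_j\beta_j(t)e_j$ in $L^2(\Omega,\cH^r)$ — the $Q$-Wiener property recalled earlier — is immediate. For the covariance formula I would realize $W$ in $\cH^0$ (legitimate as $H_q \xhookrightarrow[]{\cL_2}\cH^r\hookrightarrow\cH^0$), use $\E[\inpro[\cH^0]{W(1)}{u}\inpro[\cH^0]{W(1)}{v}] = \inpro[H_q]{I^*_{H_q\hookrightarrow\cH^0}u}{I^*_{H_q\hookrightarrow\cH^0}v}$, and compute $I^*_{H_q\hookrightarrow\cH^0}u$ through the representation $H_q = T(H_{\tilde q}\oplus\R)$: for $u = \tilde u + a_u$ it is the $(\ker T)^\perp$-part of $(I^*_{H_{\tilde q}\hookrightarrow L^2}\tilde u + a_u f,\ a_u\sqrt c)$. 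The inner product of two such vectors, together with $\inpro[H_{\tilde q}]{I^*_{H_{\tilde q}\hookrightarrow L^2}\tilde u}{I^*_{H_{\tilde q}\hookrightarrow L^2}\tilde v} = \inpro[L^2]{\tilde Q_0 \tilde u}{\tilde v}$ for the (Hilbert--Schmidt) integral operator $\tilde Q_0$ with kernel $\tilde q$, and $\inpro[H_{\tilde q}]{f}{I^*_{H_{\tilde q}\hookrightarrow L^2}\tilde v} = \inpro[L^2]{f}{\tilde v}$, delivers the three stated terms. For $r>1/2$, $\cH^r$ is a RKHS with kernel $k$ whose elements are bounded continuous functions, so $W(t,\cdot)$ is a.s. a bounded continuous function; since $W(t)$ is centred Gaussian in $\cH^r$ with covariance $t\,II^*$, we have $\E[|\inpro[\cH^r]{W(t)}{h}|^2] = t\,\norm[H_q]{I^*h}^2$ for $h\in\cH^r$, and taking $h = k(x,\cdot)$, resp. $h = k(x,\cdot)-k(y,\cdot)$, with $I^*k(x,\cdot) = q(x,\cdot)$ gives $\E[W(1,x)W(1,y)] = \inpro[H_q]{q(x,\cdot)}{q(y,\cdot)} = q(x,y)$ and $\E[|W(t,x)-W(t,y)|^2] = t\,\norm[H_q]{q(x,\cdot)-q(y,\cdot)}^2 \lesssim \norm[\cH^r]{k(x,\cdot)-k(y,\cdot)}^2 = 2(m_r(0)-m_r(x-y)) \to 0$ as $y\to x$ by continuity of $m_r$, while $\sup_x\E[|W(t,x)|^2] = t\sup_x\norm[H_q]{q(x,\cdot)}^2 \lesssim \sup_x\norm[\cH^r]{k(x,\cdot)}^2 = 1+m_r(0) < \infty$.

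\emph{Hardest step.} The main obstacle is the forward implication of the characterization: pinning down $\ker(PI)$ — equivalently, whether the constant function lies in $H_q$ — and hence the constant $c$, verifying that $PI(H_q)$ with the range norm is genuinely a \emph{separable} RKHS, and keeping the three inner products $\inpro[H_q]{\cdot}{\cdot}$, $\inpro[H_{\tilde q}]{\cdot}{\cdot}$, $\inpro[\cH^r]{\cdot}{\cdot}$ straight when proving $PI\,q(x,\cdot) = \tilde q(x,\cdot)+f$ and extracting $c$. By contrast, the converse direction and the covariance computation are comparatively mechanical once the feature map $T$ is set up, and the $r>1/2$ statements fall out of the identity $I^*k(x,\cdot) = q(x,\cdot)$ and the RKHS structure of $\cH^r$.
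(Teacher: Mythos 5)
Your proposal is correct and reaches all three assertions, but it executes the same underlying idea (splitting off the $\R$-summand of $\cH^r = H^r \oplus \R$) by genuinely different means than the paper. For the forward implication the paper fixes an ONB $(e_j)_{j=1}^\infty$ of $H_q$, writes $e_j = f_j + c_j$ in $H^r \oplus \R$, defines $\tilde q(x,y) := \sum_j f_j(x)f_j(y)$ together with the sequence space spanned by the $f_j$, and reads off $f$ and $c$ by expanding $q(x,y) = \sum_j (f_j(x)+c_j)(f_j(y)+c_j)$; you instead argue coordinate-free with the composition of the embedding and the projection onto $H^r$, identify its kernel with the constant functions in $H_q$ (so that the orthogonal component of $q(x,\cdot)$ in that kernel is $x$-independent, giving $c = \kappa^2$), endow the range with the range norm, and obtain $f$ as the image of the Riesz representative of the constant-part functional -- I checked that this does yield $P I q(x,\cdot) = \tilde q(x,\cdot) + f$ and hence the stated identity. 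Your version avoids the basis bookkeeping (and is cleaner where the paper's sequence-space definition is ambiguous if the $f_j$ are dependent), while the paper's ONB expansion pays off later, since the same expansion plus a H\"older/Fubini estimate is exactly what justifies the double-integral covariance formula. Your converse via the feature map $\Phi(x) = (\tilde q(x,\cdot)+f, \sqrt{c})$ and a rank-one perturbation of a Hilbert--Schmidt operator is an explicit rendering of what the paper leaves as ``analogous arguments''. Two small points: in the covariance computation, the replacement of $\inpro[H_{\tilde q}]{I^*_{H_{\tilde q}\hookrightarrow L^2}\tilde u}{I^*_{H_{\tilde q}\hookrightarrow L^2}\tilde v}$ by $\int\int \tilde q(x,y)\tilde u(x)\tilde v(y)\dd x \dd y$ is precisely where the paper's interchange-of-sum-and-integral argument (using $H_{\tilde q} \xhookrightarrow[]{\cL_2} L^2$) lives, so this step should be justified rather than cited as automatic; and your handling of point evaluation on the range space when $r=0$ (descending $g \mapsto g(x) - b(g)$ to canonical representatives) correctly resolves the only real subtlety there. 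Your treatment of the $r>1/2$ claims via $I^* (1 + m_r(x-\cdot)) = q(x,\cdot)$ is a clean, self-contained alternative to the paper's appeal to an external random-field existence theorem combined with the ONB identity.
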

	
	Note that if $H_q \hookrightarrow \cH^r$ for some $r > 1/2$, without the $\cL_2$ property of the embedding, there might not be a $Q$-Wiener process with covariance function $q$. 
	However, as long as  $H_q \xhookrightarrow[]{\cL_2} \cH^0 = L^2(\R^+) \oplus \R$, this theorem shows that we can still interpret $q$ as the covariance function of $W$ in a weaker $L^2$ sense. One might also consider embeddings in negatively weighted spaces. We do not pursue this direction but focus instead on constructing kernels $q$ such that $H_q \xhookrightarrow[]{\cL_2} H^r$.
	
	\begin{proof}[Proof of Theorem~\ref{thm:wiener-covariance-functions}]
		First, suppose that there is a separable RKHS $H_q$ such that $H_q \xhookrightarrow[]{\cL_2} \cH^r = H^r \oplus \R$. Let $(e_j)_{j=1}^\infty$ be an ONB of $H_q$ and write $e_j = f_j + c_j$ with $f_j \in \cH^r, c_j \in \R$. Since 
		\begin{equation}
			\label{eq:thm:wiener-covariance-functions:pf:2}
			\sum_{j = 1}^\infty \norm[\cH^r]{f_j}^2 + c_j^2 = \norm[\cL_2(H_q,\cH^r)]{I_{H_q \hookrightarrow \cH^r}} < \infty
		\end{equation}
		we have $(c_j)_{j=1}^\infty \in \ell^2$. Moreover, due to the fact that $f_j = e_j - c_j$, the evaluation operator $\delta_x$ is well-defined on $f_j$. Since
		\begin{equation*}
			\sum_{j=1}^\infty |f_j(x)|^2 = \sum_{j=1}^\infty |e_j(x) - c_j|^2 \le 2 \sum_{j=1}^\infty c_j^2 + 2 \sum_{j=1}^\infty |e_j(x)|^2 = 2 \norm[\ell^2]{(c_j)_{j=1}^\infty}^2 + 2 q(x,x) < \infty,
		\end{equation*}
		we may define a kernel $\tilde q$ by 
		\begin{equation*}
			\tilde q(x,y) = \sum_{j = 1}^\infty f_j(x) f_j(y)
		\end{equation*}
		for $x,y \in \R^+$. The kernel $\tilde q$ is symmetric and positive semidefinite. We define a Hilbert space $H$ by 
		\begin{equation*}
			H := \left\{ v = \sum_{j =1}^\infty v_j f_j,  (v_j)_{j =1}^\infty \subset \R \text{ such that } \norm{v}^2 := \sum_{j =1}^\infty v_j^2 < \infty \right\}.
		\end{equation*}
		An ONB of this space is $(f_j)_{j =1}^\infty$. Moreover, $H$ is a RKHS: for $v \in H$ and $x \in \R^+$,
		\begin{equation*}
			|v(x)|^2 \le \Big(\sum_{j =1}^\infty |v_j| |f_j(x)|\Big)^2 \le \sum_{j =1}^\infty |v_j|^2 \sum_{j =1}^\infty|f_j(x)|^2 = \norm[H]{v}^2 \tilde{q}(x,x).
		\end{equation*}
		Since also $\tilde q(x, \cdot) \in H$ and $\inpro[H]{\tilde q(x, \cdot)}{v} = v(x)$, we find that $H = H_{\tilde q}$ is the RKHS of $\tilde q$. The fact that $H_{\tilde q} \xhookrightarrow[]{\cL_2} H^r$ follows directly from~\eqref{eq:thm:wiener-covariance-functions:pf:2}. Let $\cJ = \{j \in \N \text{ such that } f_j \neq 0\}$. Then 
		\begin{align*}
			q(x,y) &= \sum_{j = 1}^\infty (f_j(x) + c_j)(f_j(y) + c_j) \\
			&= \sum_{j \in \cJ} f_j(x) f_j(y) + \sum_{j \in \cJ} c_j f_j(x) + \sum_{j \in \cJ} c_j f_j(y) + \sum_{j \in \cJ} c_j^2 + \sum_{j \in \cJ^c} c_j^2,
		\end{align*}
		where the split is justified by $(c_j)_{j = 1}^\infty, (f_j(x))_{j = 1}^\infty$ and $(f_j(y))_{j = 1}^\infty \in \ell^2$ for all $x,y \in \R^+$. By setting $f = \sum_{j \in \cJ} c_j f_j(x)$ and $c = \sum_{j \in \cJ^c} c_j^2$ we obtain one direction of the first claim of Theorem~\ref{thm:wiener-covariance-functions}. The other direction is obtained by analogous arguments.
		
		For the second claim, consider the same setting as above. Since $H_q \xhookrightarrow[]{\cL_2} \cH^r$, 
		\begin{equation}
			\label{eq:thm:wiener-covariance-functions:pf:1}
			\begin{split}
				\E[\inpro[\cH^r]{W(1)}{u}\inpro[\cH^r]{W(1)}{v}] = \inpro[\cH^r]{Qu}{v} &= \inpro[H_q]{I^*_{H_q \hookrightarrow \cH^r} u}{I^*_{H_q \hookrightarrow \cH^r} v} \\
				&= \sum^\infty_{j=1} \inpro[H_q]{I^*_{H_q \hookrightarrow \cH^r} u}{e_j} \inpro[H_q]{I^*_{H_q \hookrightarrow \cH^r} v}{e_j} \\
				&= \sum^\infty_{j=1} \inpro[\cH^r]{u}{e_j} \inpro[\cH^r]{v}{e_j}.
			\end{split}
		\end{equation}
		Applying this identity with $r = 0$ yields
		\begin{align*}
			&\E[\inpro[\cH^0]{W(1)}{u}\inpro[\cH^0]{W(1)}{v}] \\
			&\quad= \sum^\infty_{j=1} (\inpro[L^2]{\tilde u}{f_j} + a_u c_j) (\inpro[L^2]{\tilde v}{f_j} + a_v c_j) \\
			&\quad= \sum_{j \in \cJ} \inpro[L^2]{\tilde u}{f_j} \inpro[L^2]{\tilde v}{f_j} + \sum_{j \in \cJ} a_v c_j \inpro[L^2]{\tilde u}{f_j} + \sum_{j \in \cJ} a_u c_j \inpro[L^2]{\tilde v}{f_j} + \sum^\infty_{j=1} c_j^2 \\
			&\quad= \int_{\R^+ \times \R^+} \tilde q(x,y) \tilde u(x) \tilde v(y) \dd x \dd y + \int_{\R^+} \tilde a_u v(x) f(x) + a_v \tilde u (x) f(x) \dd x + a_u a_v (\norm[H_{\tilde q}]{f}^2 + c).
		\end{align*}
		The exchange of summation and integration in the last step is justified in the first case by 
		\begin{align*}
			&\sum_{j \in \cJ} \int_{\R^+ \times \R^+} |f_j(x)| |f_j(y)| |\tilde u(x)| |\tilde v(y)| \dd x \dd y \\
			&\quad\le \int_{\R^+} |\tilde u(x)|  \sqrt{\Big(\sum_{j \in \cJ} |f_j(x)|^2\Big)} \dd x
			\int_{\R^+} |\tilde v(y)|  \sqrt{\Big(\sum_{j \in \cJ} |f_j(y)|^2\Big)} \dd y \\
			&\quad\le \norm[\cL_2(H_q,L_2)]{I_{H_q \hookrightarrow L_2}} \norm[L^2]{\tilde v} \norm[L^2]{\tilde u} 
		\end{align*}
		using H\"older's inequality twice. The justification in the second and third cases is similar.
		
		For the third claim, we first note that $x \mapsto W(t,x)$ is a well-defined random field for all $t \in [0,T]$ due to \cite[Theorem~7.5.1]{HE15} and the fact that $(x,y) \mapsto m_r(y-x) + 1$ is continuous and bounded on $\R^+ \times \R^+$. The claim follows by taking $u = m_r(x-\cdot) + 1$ and $v = m_r(y-\cdot) + 1$ in~\eqref{eq:thm:wiener-covariance-functions:pf:1}. \qedhere
		
	\end{proof}
	
	Up to this point, we have let $W$ denote a general cylindrical Wiener process. We now introduce the key assumptions on the kernels of $B$ and $W$ in~\eqref{eq:spde-system} which we use to deduce regularity results for this system in Section~\ref{sec:regularity}. Due to the presence of the term $\Gamma^Z$ in~\eqref{eq:spde-system}, the assumptions mostly concern the covariance of $W$.
	
	\begin{assumption}
		\label{ass:noise-kernel}
		Let $B$ and $W$ in~\eqref{eq:spde-system} be cylindrical Wiener processes with kernels $q_B$ and $q_W$, respectively. Let $r_B > 1/2$ and $r_W \ge 0$. Assume the following for the RKHSs $H_{q_B}$ and $H_{q_W}$ on $\R^+$:
		\begin{enumerate}[label=(\roman*)]
			\item \label{ass:noise-kernel-i} The embedding $H_{q_B} \hookrightarrow \cH^{r_B}$ holds true.
			\item \label{ass:noise-kernel-iii} The embedding $H_{q_W} \xhookrightarrow[]{\cL_2} H^{r_W}(\R^+)$ holds true.
			\item \label{ass:noise-kernel-iv} There is an ONB $(e_j)_{j =1}^\infty$ of $H_{q_W}$ such that $\sum_{j = 1}^{\infty} \norm[L^\infty(\R^+)]{e_j}^2 < \infty$.
		\end{enumerate}
	\end{assumption}
	
	We also need the following assumption for our numerical analysis of approximations to $Y$ in Section~\ref{sec:numerical}.
	
	\begin{assumption}
		\label{ass:noise-kernel-w}
		Under the same conditions as in Assumption~\ref{ass:noise-kernel}, let $q_w$ be defined on $\R \times \R$. For some $s_{W} > 1/2$ and $\alpha > 0$, $H_{q_w}(\R) \hookrightarrow H^{s_{W},\alpha}(\R)$.
	\end{assumption}
	
	We end this section with an example of a class of kernels $q$ that fulfill all parts of these assumptions.  
	They are based on stationary kernels $q_s$ that are positive definite, integrable and continuous on $\R$. We recall that under these conditions, $q_s$ has a positive Fourier transform  $\hat{q}_s \colon \R \to \R^+$ which is also integrable on $\R$, see~\cite[Chapter~6]{W04}. 
	
	\begin{theorem}[Weight-stationary kernels]
		\label{thm:noise-kernel}
		Let $q_s \in \cL^1(\R) \cap \cC(\R)$ be a stationary positive definite kernel with a spectral density $\hat{q_s}$ such that for some constant $C < \infty$, $\sigma > 1/2$, $\hat{q_s}(\xi) \le C w_{-\sigma}(\xi)$ for all $\xi \in \R$. Let $w \colon \R \to \R^+$ be a continuous symmetric function such that the mapping $f(x) \mapsto {w_\alpha}(x) w(x) f(x)$, $x \in \R$, is bounded with respect to $\cL(H^\sigma(\R))$ for some $\alpha > 1/4$. Let $C_0$ be an arbitrary non-negative constant and let the kernel $q$ on $\R$ be defined by 
		\begin{equation*}
			q(x,y) := w(x) q_s(x-y) w(y), \, x, y \in \R.
		\end{equation*}
		Then $H_q = H_q(\R^+)$ fulfills 
		\begin{enumerate}[label=(\roman*)]
			\item \label{prop:noise-kernel-sobolev} $H_q \hookrightarrow H^{r,\alpha}(\R^+)$ for all $r \le \sigma$,
			\item \label{prop:noise-kernel-hs} $H_q \xhookrightarrow[]{\cL_2} H^{r}(\R^+)$ for all $r < \sigma - 1/2$, and
			\item \label{prop:noise-kernel-maxnorm} there is an ONB $(e_j)_{j =1}^\infty$ of $H_{q}$ such that $\sum_{j = 1}^{\infty} \norm[L^\infty(\R^+)]{e_j}^2 < \infty$. 
		\end{enumerate}
	\end{theorem}
	
	For the proof of this theorem, we need two lemmas. The first deals with the restriction of kernels on $\R$ to kernels on $\R^+$.  
	
	\begin{lemma}
		\label{lem:restrictions}
		Let $\cD \subseteq \R$ be a possibly unbounded interval and let $q$ be a kernel on $\R$ such that $H_q(\R)$ is separable. 
		Then $H_{q}(\cD)$ is separable and 
		\begin{enumerate}[label=(\roman*)]
			\item \label{lem:restrictions-i} if $H_q(\R) \xhookrightarrow[]{\cL_2} H^r(\R)$, then $H_{q}(\cD)  \xhookrightarrow[]{\cL_2} H^r(\cD)$ and
			\item \label{lem:restrictions-ii} if there is an ONB $(e^{\R}_j)_{j=1}^\infty$ of $H_q(\R)$ such that $\sum_{j = 1}^\infty \norm[L^\infty(\R)]{e^{\R}_j}^2 < \infty$, then there is an ONB $(e^\cD_j)_{j=1}^\infty$ of $H_q(\cD)$ such that $\sum_{j = 1}^\infty \norm[L^\infty(\cD)]{e^\cD_j}^2 < \infty$.
		\end{enumerate}
	\end{lemma}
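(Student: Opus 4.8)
The plan is to exploit the restriction machinery for RKHSs recalled in the Preliminaries, in particular the facts that $H_q(\cD) = R_{\R^d \to \cD}(H_q(\R^d))$ with equal norms and that there is a norm-one extension operator $E_{\cD \to \R^d}$ with $R_{\R^d \to \cD} E_{\cD \to \R^d} = I$ on $H_q(\cD)$. Separability of $H_q(\cD)$ is immediate: $R_{\R^d \to \cD}$ is a norm-one surjection from a separable space, so its image is separable. The two parts then each amount to transporting a summability property through $R := R_{\R^d \to \cD}$ and $E := E_{\cD \to \R^d}$.

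For part~\ref{lem:restrictions-i}, I would factor the embedding $I_{H_q(\cD) \hookrightarrow H^r(\cD)}$ as a composition $R_{\R^d \to \cD}^{H^r} \circ I_{H_q(\R^d) \hookrightarrow H^r(\R^d)} \circ E_{\cD \to \R^d}$, where $R_{\R^d \to \cD}^{H^r}$ is the restriction operator $H^r(\R^d) \to H^r(\cD)$, which is bounded (indeed norm-one, since $H^r(\cD)$ carries the quotient norm). Both $E_{\cD \to \R^d}$ and $R_{\R^d \to \cD}^{H^r}$ are bounded linear maps and $I_{H_q(\R^d) \hookrightarrow H^r(\R^d)} \in \cL_2$ by hypothesis; since $\cL_2$ is an operator ideal (this is noted in the excerpt after~\eqref{eq:hs-norm-approximation-numbers}), the composition is Hilbert--Schmidt, i.e.\ $H_q(\cD) \xhookrightarrow[]{\cL_2} H^r(\cD)$. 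One should check that these three maps genuinely compose to the embedding $H_q(\cD) \hookrightarrow H^r(\cD)$ on the nose, i.e.\ that $R^{H^r}_{\R^d\to\cD}(E_{\cD\to\R^d} u)|_{\cD} = u$ for $u \in H_q(\cD)$; this holds because $E_{\cD\to\R^d} u$ is, pointwise, a function on $\R^d$ extending $u$, and restricting it back to $\cD$ recovers $u$.

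For part~\ref{lem:restrictions-ii}, given the ONB $(e^{\R^d}_j)$ of $H_q(\R^d)$ with $\sum_j \|e^{\R^d}_j\|_{L^\infty(\R^d)}^2 < \infty$, the natural candidate is $e^\cD_j := R e^{\R^d}_j = e^{\R^d}_j|_{\cD}$. These clearly satisfy $\|e^\cD_j\|_{L^\infty(\cD)} \le \|e^{\R^d}_j\|_{L^\infty(\R^d)}$, so the summability is inherited; the content is that $(e^\cD_j)$ is an ONB of $H_q(\cD)$. That the $e^\cD_j$ span a dense subspace follows from surjectivity of $R$. The subtlety — and I expect this to be the main obstacle — is orthonormality: $R$ has equal norms but is not an isometry in the inner-product sense (it has a nontrivial kernel, namely functions in $H_q(\R^d)$ vanishing on $\cD$), so $\langle e^\cD_i, e^\cD_j\rangle_{H_q(\cD)}$ need not equal $\langle e^{\R^d}_i, e^{\R^d}_j\rangle_{H_q(\R^d)}$. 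The way around this is to use the kernel decomposition $q(x,\cdot) = \sum_j e^{\R^d}_j(x) e^{\R^d}_j$ (valid by~\cite[Theorem~14]{BT04}, quoted in the excerpt) together with the characterization of $H_q(\cD)$ as the RKHS of the restricted kernel: one shows that the family $(e^\cD_j)_{j \in \cJ}$, where $\cJ = \{j : e^\cD_j \neq 0\}$, after a Gram--Schmidt-type orthonormalization if necessary, still reproduces $q$ on $\cD \times \cD$ and hence forms an ONB of $H_q(\cD)$; crucially any orthonormalization only mixes finitely-overlapping combinations and so preserves the $\ell^2$-summability of the sup-norms (here one can invoke that an orthonormal family obtained as $f_k = \sum_j a_{kj} e^\cD_j$ with $\sum_k |a_{kj}|^2 \le 1$ for each $j$ — as holds for any orthonormalization inside the Hilbert space — satisfies $\sum_k \|f_k\|_{L^\infty(\cD)}^2 \le \sum_j \|e^\cD_j\|_{L^\infty(\cD)}^2$ by a rearrangement argument). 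A cleaner alternative, which I would try first, is to invoke~\cite[Theorem~6]{BT04} directly: since $H_q(\cD) = R(H_q(\R^d))$ with the pseudoinverse/quotient norm, the images $Re^{\R^d}_j$ of an ONB under a co-isometric quotient map form, after discarding zeros, a Parseval frame of $H_q(\cD)$, and for a Parseval frame $(g_j)$ one still has $\sum_j \|g_j\|^2_{L^\infty(\cD)} \le \sum_j \|e^{\R^d}_j\|^2_{L^\infty(\R^d)}$ and $\sum_j g_j(x)g_j(y) = q(x,y)$ on $\cD$; passing from a Parseval frame with finite $\sum_j\|g_j\|_\infty^2$ to an ONB with the same bound is then a standard dilation/orthogonalization step. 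Either route delivers~\ref{lem:restrictions-ii}.
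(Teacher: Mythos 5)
Your treatment of separability and of part~\ref{lem:restrictions-i} is essentially identical to the paper's: the paper also factors $I_{H_q(\cD)\hookrightarrow H^r(\cD)} = R_{\R^d\to\cD}\, I_{H_q(\R^d)\hookrightarrow H^r(\R^d)}\, E_{\cD\to\R^d}$, uses that $R_{\R^d\to\cD}(H^r(\R^d))=H^r(\cD)$ and $R_{\R^d\to\cD}(H_q(\R^d))=H_q(\cD)$ with the quotient norms, and invokes the ideal property of the Hilbert--Schmidt class. Nothing to add there.

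Part~\ref{lem:restrictions-ii} is where you diverge, and where your argument has a genuine gap. The paper's proof is exactly the ``natural candidate'' you start from: it sets $e^\cD_j := R_{\R^d\to\cD}e^{\R^d}_j$, notes $\norm[L^\infty(\cD)]{e^\cD_j}\le\norm[L^\infty(\R^d)]{e^{\R^d}_j}$, and asserts that $(e^\cD_j)_{j=1}^\infty$ is an ONB of $H_q(\cD)$; it does not engage with the orthonormality issue you raise (and your observation is correct: $R_{\R^d\to\cD}$ is only a co-isometry with possibly nontrivial kernel, so the restricted family is in general a Parseval frame, not an orthonormal system). The trouble is that neither of your proposed repairs is substantiated. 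The ``rearrangement'' inequality $\sum_k\norm[L^\infty(\cD)]{f_k}^2\le\sum_j\norm[L^\infty(\cD)]{e^\cD_j}^2$ for $f_k=\sum_j a_{kj}e^\cD_j$ does not follow from the column bounds $\sum_k|a_{kj}|^2\le1$: after estimating $\norm[L^\infty(\cD)]{f_k}\le\sum_j|a_{kj}|\,\norm[L^\infty(\cD)]{e^\cD_j}$ you need the entrywise-absolute-value matrix $(|a_{kj}|)$ to be (uniformly) bounded on $\ell^2$, and orthonormality only controls the signed matrix; for a normalized Hadamard block of size $n$ the matrix $(|a_{kj}|)$ has $\ell^2$-operator norm $\sqrt{n}$, so neither the claimed bound nor even finiteness of $\sum_k\norm[L^\infty(\cD)]{f_k}^2$ is implied. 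Likewise, the ``standard dilation/orthogonalization step'' converting a Parseval frame with square-summable sup-norms into an ONB of the same space with the same property is not a standard result (dilation produces an ONB of a larger space projecting onto the frame, which goes the wrong way), and this conversion is precisely the nontrivial point. What you do verify correctly --- that $(R_{\R^d\to\cD}e^{\R^d}_j)_j$ is a Parseval frame of $H_q(\cD)$ with $\sum_j\norm[L^\infty(\cD)]{R_{\R^d\to\cD}e^{\R^d}_j}^2<\infty$ and $\sum_j (R e^{\R^d}_j)(x)(R e^{\R^d}_j)(y)=q(x,y)$ on $\cD\times\cD$ --- would suffice for the basis-independent It\^o-isometry sums in which the lemma is later used, but it does not by itself deliver the ONB asserted in~\ref{lem:restrictions-ii}. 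So as written your part~\ref{lem:restrictions-ii} is incomplete, whereas the paper's much shorter argument simply bypasses (rather than resolves) the subtlety you identified.
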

	\begin{proof}
		We recall that $R_{\R \to \cD}(H_q(\R)) = H_q(\cD)$ with equal norms. Therefore, if $(e_j)_{j = 1}^\infty$ is an ONB of $H_q(\R)$, an ONB $(\tilde e_j)_{j = 1}^\infty$ of $H_{q}(\cD)$ is obtained by letting $\tilde e^\cD_j = R_{\R \to \cD} e_j$ for all $j \in \N$. This shows that $H_{q}(\cD)$ is separable.
		
		For~\ref{lem:restrictions-i}, we note that $I_{H_{q}(\cD) \hookrightarrow R_{\R \to \cD}(H^r(\R))} = R_{\R \to \cD} I_{H_q(\R) \hookrightarrow H^r(\R)} E_{\cD\to \R}$, where we write $E_{\cD\to \R} \colon H_{q}(\cD) \to H_{q}(\R) $ for the extension operator of functions on $\cD$ to functions on $\R$. This is Hilbert--Schmidt by the ideal property of this class. Since $R_{\R \to \cD}(H^r(\R)) = H^r(\cD)$, this finishes the proof of~\ref{lem:restrictions-i}. The statement in~\ref{lem:restrictions-ii} follows directly from the definition of the $L^\infty$-norm.
	\end{proof}
	
	The next lemma shows that part~\ref{prop:noise-kernel-sobolev} of Theorem~\ref{thm:noise-kernel} is satisfied and also that Assumption~\ref{ass:noise-kernel-w} is fulfilled.
	
	\begin{lemma}
		\label{lem:noise-kernel}
		Under the conditions of Theorem~\ref{thm:noise-kernel}, $H_{q}(\R) \hookrightarrow H^{\sigma,\alpha}(\R)$.
	\end{lemma}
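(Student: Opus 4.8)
The plan is to give $H_q(\R)$ an explicit description and thereby reduce the statement to two facts: that pointwise multiplication by $w$ is an isometric isomorphism $H_{q_s}(\R) \to H_q(\R)$, and that $H_{q_s}(\R)$ embeds continuously into $H^\sigma(\R)$. Combining the two and then applying the assumed boundedness of $f \mapsto w_\alpha w f$ on $H^\sigma(\R)$ yields the claim.

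\emph{Step 1 ($H_{q_s}(\R) \hookrightarrow H^\sigma(\R)$).} Since $q_s \in \cL^1(\R) \cap \cC(\R)$ is positive definite, $\widehat{q_s} > 0$ and, by the standard Fourier characterization of the RKHS of a stationary kernel (see \cite{W04}), $H_{q_s}(\R)$ consists of the $u$ with $\norm[H_{q_s}(\R)]{u}^2 = (2\pi)^{-1/2} \int_\R |\hat u(\xi)|^2 / \widehat{q_s}(\xi) \dd \xi < \infty$. The hypothesis $\widehat{q_s}(\xi) \le C w_{-\sigma}(\xi)$ gives $w_\sigma(\xi) \le C / \widehat{q_s}(\xi)$, so for every $u \in H_{q_s}(\R)$ one has $\norm[H^\sigma(\R)]{u}^2 = (2\pi)^{-1/2}\int_\R |\hat u(\xi)|^2 w_\sigma(\xi) \dd\xi \le C \norm[H_{q_s}(\R)]{u}^2$, i.e. $H_{q_s}(\R) \hookrightarrow H^\sigma(\R)$. (Equivalently, since $\widehat{m_\sigma} = w_{-\sigma}$, the function $C m_\sigma - q_s$ has a nonnegative integrable Fourier transform — integrability using $\sigma > 1/2$ — hence is positive semidefinite, so $m_\sigma \gtrsim q_s$ and the criterion recalled in Section~\ref{sec:preliminaries} applies.)

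\emph{Step 2 (multiplication by $w$).} As $w > 0$ on $\R$, the set $H := \{ w g : g \in H_{q_s}(\R)\}$, equipped with $\norm[H]{wg} := \norm[H_{q_s}(\R)]{g}$, is a well-defined Hilbert space of functions on $\R$, isometrically isomorphic to $H_{q_s}(\R)$ via $g \mapsto wg$. I would then check it is the RKHS of the (symmetric, positive semidefinite) kernel $q$: using $q_s(x-\cdot) \in H_{q_s}(\R)$ we get $q(x,\cdot) = w \cdot \bigl(w(x)\, q_s(x-\cdot)\bigr) \in H$, and for $h = wg \in H$,
\begin{equation*}
\inpro[H]{h}{q(x,\cdot)} = \inpro[H_{q_s}(\R)]{g}{w(x)\, q_s(x-\cdot)} = w(x)\, g(x) = h(x), \quad x \in \R .
\end{equation*}
By uniqueness of the RKHS of a given kernel, $H = H_q(\R)$; in particular every $h \in H_q(\R)$ is of the form $h = wg$ with $g \in H_{q_s}(\R)$ and $\norm[H_q(\R)]{h} = \norm[H_{q_s}(\R)]{g}$.

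\emph{Step 3 (conclusion, and the main obstacle).} Let $h \in H_q(\R)$ and write $h = wg$ as in Step 2. By Step 1, $g \in H^\sigma(\R)$ with $\norm[H^\sigma(\R)]{g} \lesssim \norm[H_q(\R)]{h}$, so the assumed boundedness of $f \mapsto w_\alpha w f$ on $H^\sigma(\R)$ gives $w_\alpha h = w_\alpha w g \in H^\sigma(\R)$ with $\norm[H^\sigma(\R)]{w_\alpha h} \lesssim \norm[H^\sigma(\R)]{g} \lesssim \norm[H_q(\R)]{h}$; and since $\alpha > 0$ forces $w_\alpha \ge 1$, the membership $w_\alpha h \in H^\sigma(\R) \subset L^2(\R)$ already gives $h \in L^2(\R)$. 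Hence $h \in H^{\sigma,\alpha}(\R)$ with $\norm[H^{\sigma,\alpha}(\R)]{h} = \norm[H^\sigma(\R)]{w_\alpha h} \lesssim \norm[H_q(\R)]{h}$, the desired embedding. The only step demanding genuine care is Step 1 — correctly matching the Fourier-side description of $H_{q_s}(\R)$ (equivalently, the identity $\widehat{m_\sigma} = w_{-\sigma}$) with the spectral hypothesis $\widehat{q_s} \le C w_{-\sigma}$ — while Steps 2 and 3 are routine once the right objects are in place.
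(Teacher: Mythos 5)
Your proof is correct under the hypotheses as stated, and it is organized differently from the paper's. The paper factors the embedding through the intermediate weighted Mat\'ern space: it first shows $H_{m_{w,\sigma}} \hookrightarrow H^{\sigma,\alpha}$ by verifying the kernel domination $m_{w_{-\alpha},\sigma} \gtrsim m_{w,\sigma}$ (dividing out the strictly positive $w_{-\alpha}$ and invoking \cite[Proposition~5.20]{PR16} together with the multiplier hypothesis on $w_\alpha w$), and then shows $H_q \hookrightarrow H_{m_{w,\sigma}}$ from $m_\sigma \gtrsim q_s$. You instead avoid the domination criterion altogether: you identify $H_q(\R)$ isometrically as $w\,H_{q_s}(\R)$ by a direct reproducing-kernel verification, pull the spectral bound $\hat q_s \le C w_{-\sigma}$ through the Fourier description of $H_{q_s}$ (this is exactly the content of \cite[Theorem~10.12]{W04} that the paper also uses), and then apply the multiplier hypothesis once. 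What your route buys is an explicit norm identity for $H_q$ and a shorter chain of embeddings; what the paper's route buys is robustness when $w$ is allowed to vanish. Note that although Proposition~\ref{prop:noise-kernel} states $w \colon \R \to \R^+$, Remark~\ref{rem:matern} and the numerical section use compactly supported bump weights; in that case your Step~2 map $g \mapsto wg$ is not injective and the isometry must be replaced by the quotient (minimal-norm preimage) description of $H_{wq_sw}$, i.e.\ precisely \cite[Proposition~5.20]{PR16}, after which your Step~3 goes through verbatim by choosing the minimal-norm $g$ with $h = wg$. Also, your parenthetical $\widehat{m_\sigma} = w_{-\sigma}$ holds only up to a normalizing constant, which is harmless for the embedding. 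With these two small caveats acknowledged, the argument is complete.
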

	
	\begin{proof}
		The proof is divided into two steps. First, we show that $H_{m_{w,\sigma}}(\R) \hookrightarrow H^{\sigma,\alpha}(\R)$. Then we show that $H_{q}(\R) \hookrightarrow H_{m_{w,\sigma}}(\R)$. Here we write $m_{v,\sigma}$ for the weighted Mat\'ern kernel $(x,y) \mapsto v(x)m_\sigma(x-y)v(y)$, where $v$ is a given function on $\R$. 
		
		For the first step, it suffices to check that $m_{w_{-\alpha},\sigma} \gtrsim m_{w,\sigma}$. This means that there is some constant $C > 0$ such that for all $n \in \N$ and $d,x \in \R^n$, 
		\begin{equation*}
			\sum_{i,j=1}^{n} d_i \left(C w_{-\alpha}(x_i) m_{\sigma}(x_i-x_j) w_{-\alpha}(x_j) - w(x_i) m_{\sigma}(x_i-x_j) w(x_j) \right) d_j \ge 0.
		\end{equation*}
		Since $w_{-\alpha}$ is a strictly positive function, this is equivalent to showing that there is some constant $C > 0$ such that for all $n \in \N$ and $d,x \in \R^n$, 
		\begin{equation*}
			\sum_{i,j=1}^{n} d_i \big(C m_{\sigma}(x_i-x_j) - w_{\alpha}(x_i) w(x_i) m_{\sigma}(x_i-x_j) w(x_j) w_{\alpha}(x_j) \big) d_j \ge 0.
		\end{equation*}
		This in turn is equivalent to $H_{m_{w_{\alpha} w,\sigma}} \hookrightarrow H^\sigma$. 
		We now use the fact (see~\cite[Proposition~5.20]{PR16}) that $H_{m_{w_{\alpha} w,\sigma}} = \{u : u = w_{\alpha} w v \text{ for some } v \in H^\sigma\} = \{u : u = w_{\alpha} w v \text{ for some } v \in (H^\sigma_0)^\perp\}$. 
		Here $H^\sigma_0 : = \{u \in H^\sigma : w_{\alpha} w u = 0 \}$, which is closed in $H^\sigma$ by continuity of the evaluation operator. 
		The norm of this RKHS may be represented by $\norm[H_{m_{w_{\alpha} w,\sigma}}]{u} = \norm[H^\sigma]{v}$ for the unique $v \in (H^\sigma_0)^\perp$ such that $u = w_{\alpha} w v$. 
		Moreover, since we have assumed that the mapping $f \mapsto {w_\alpha} w f$ belongs to $\cL(H^\sigma)$, we have for all $u = w_{\alpha} w v$ that $\norm[H^\sigma]{u} \lesssim \norm[H^\sigma]{v}$. 
		This shows that $H_{m_{w_{\alpha} w,\sigma}} \hookrightarrow H^\sigma$.
		
		For the second step, let us note that the separable space $H_{q_s}$ is continuously embedded into $H^\sigma = H_{m_\sigma}$ (a straightforward consequence of~\cite[Theorem~10.12]{W04}, see also \cite[Corollary~10.13]{W04}). Therefore $m_\sigma \gtrsim q_s$. This directly implies $m_{w,\sigma} \gtrsim q$, which finishes the proof.
	\end{proof}
	
	We are now ready to prove the rest of Theorem~\ref{thm:noise-kernel}.  
	
	\begin{proof}[Proof of Theorem~\ref{thm:noise-kernel}.]
		
		Since $q$ is continuous, $H_q(\R)$ is separable. With this in mind, let us first show~Theorem~\ref{thm:noise-kernel}\ref{prop:noise-kernel-hs}. By Lemmas~\ref{lem:restrictions} and~\ref{lem:noise-kernel}, it suffices to show that~$H^{\sigma, \alpha}(\R^+) \xhookrightarrow[]{\cL_2} H^{r}(\R^+)$. We do this by combining the approximation number properties~\eqref{eq:approximation-bound-by-entropy} and~\eqref{eq:hs-norm-approximation-numbers} with the observation that $(\epsilon_n(I_{H^{\sigma,\alpha}(\R) \hookrightarrow H^r(\R)}))_{n=1}^\infty \in \ell^2$.
		This observation follows from the results of~\cite{HT94}, wherein entropy numbers of embeddings between weighted Besov-type spaces $B^s_{p,q}(\R)$ are studied. We note that $H^r(\R) = B^r_{2,2}(\R)$ with equivalent norms for all $r \ge 0$ \cite[Theorems~2.3.9 and~2.5.6]{T83}. With this in mind, we have by \cite[Theorem~4.2]{HT94} that $\epsilon_n(I_{H^{\sigma,\alpha}(\R) \hookrightarrow H^r(\R)})$ is bounded from above and below by a constant (independent of $n$) times $n^{-(\sigma-r)}$ in the case that $\sigma < r + 2\alpha$ and $n^{-2\alpha}$ in the case that $\sigma > r + 2\alpha$. In either of these two cases, therefore, the number is bounded by $n^{-1/2-\epsilon}$ for some $\epsilon > 0$ so that $H^{\sigma, \alpha} \xhookrightarrow[]{\cL_2} H^{r}$. In case $2\alpha = \sigma-r > 1/2$ we can find some $\tilde\alpha \in (1/4,\alpha)$ so that $H^{\sigma, \alpha} \hookrightarrow H^{\sigma, \tilde \alpha} \xhookrightarrow[]{\cL_2} H^{r}$, hence we arrive at the same conclusion. We note at this point that the results of~\cite{HT94} are for spaces of complex-valued functions, but it follows directly from the definition that the entropy numbers of embeddings of our real-valued spaces are bounded by those of the complex-valued spaces.
		
		For the claim~\ref{prop:noise-kernel-maxnorm}, we recall that an operator $T \in \cL(H,E)$, where $H$ is a separable Hilbert space and $E$ is a Banach space, is said to be $\gamma$-radonifying if 
		\begin{equation*}
			\norm[\gamma(H,E)]{T}^2 := \E\left[\Bignorm[L^\infty]{\sum_{j = 1}^\infty \xi_j T e_j}^2\right] < \infty
		\end{equation*}
		where this operator norm is independent of the choice of ONB $(e_j)^\infty_{j=1}$ of $H$ and sequence of iid standard Gaussian random variables $(\xi_j)_{j=1}^\infty$ \cite[Corollary~3.21]{vN10}. We will show that $I_{H_{q}(\R) \hookrightarrow L^\infty(\R)} \in \gamma(H_{q}(\R),L^\infty(\R))$, from which the result follows by \cite[Theorem~3.23]{vN10} and Lemma~\ref{lem:restrictions}. First, we note that the class of $\gamma$-radonifying operators is an operator ideal \cite[Theorem~6.2]{vN10}. Moreover, $B^0_{\infty,1}(\R) \hookrightarrow L^\infty(\R)$, a consequence of~\cite[Proposition~2.5.7]{T83}. We also have 
		\begin{equation*}
			I_{H_{q}(\R) \hookrightarrow L^\infty(\R)} =  I_{B^0_{\infty,1}(\R) \hookrightarrow L^\infty(\R)} I_{H^{\sigma,  \alpha}(\R) \hookrightarrow B^0_{\infty,1}(\R)} I_{H_{q}(\R) \hookrightarrow H^{\sigma, \alpha}(\R)}.
		\end{equation*}
		Therefore, by Lemma~\ref{lem:noise-kernel}, it suffices to note that $I_{H^{\sigma, \alpha}(\R) \hookrightarrow B^0_{\infty,1}(\R)} \in \gamma(H^{\sigma, \alpha}(\R),B^0_{\infty,1}(\R))$. This follows from~\cite[(2.2)]{KL19} if it holds that $(n^{-1/2} \epsilon_n(I_{H^{\sigma, \alpha}(\R)) \hookrightarrow L^\infty(\R)}))_{n=1}^\infty \in \ell^1$. Using~\cite[Theorem~4.2]{HT94}, \eqref{eq:operator-norm-property} and~\eqref{eq:multiplicative-property}, we see that $\epsilon_n(I_{H^{\sigma, \alpha}(\R) \hookrightarrow L^\infty(\R)})$ is bounded by a constant times $n^{-\sigma}$ if $\sigma - 1/2 < 2\alpha$, %
		by $n^{-2\alpha-1/2} (1+\log(n))$ if $\sigma - 1/2 > 2\alpha$ %
		and by $(n/(1+\log(n)))^{-\sigma}$ if $2\alpha = \sigma - 1/2$. %
		From this the result follows. %
	\end{proof}
	
	\begin{remark}
		\label{rem:matern}
		For the weight in Theorem~\ref{thm:noise-kernel}, possible choices include $w = w_{-\alpha}$ with $\alpha > 1/4$ (or a rescaling thereof) or smooth bump functions. For the stationary kernel $q_s$, one might choose it to be a Mat\'ern kernel. This class includes exponential kernels as special cases and is defined by
		\begin{equation*}
			q_s(x) := \zeta 2^{1-\nu}/\Gamma(\nu) ({\sqrt{2\nu}|x|}/{\mu})^\nu K_\nu((\sqrt{2\nu}|x|)/\mu)
		\end{equation*}
		for $x \in \R$. Here $K_\nu$ is the modified Bessel function of the second kind with order $\nu > 0$ and $\Gamma$ is the Gamma function. It is scaled by the positive parameters, the variance $\zeta$ and the correlation length $\mu$. The assumption on $q_s$ in Theorem~\ref{thm:noise-kernel} holds with $\sigma = \nu + 1/2$, see, e.g., \cite[Example~6.8]{LPS14}. The exponential kernel is obtained with $\nu = 1/2$.
	\end{remark}
	\begin{remark}
		Since we are mainly interested in Wiener processes on the half-space $\R^+$, we restricted ourselves to this case. Theorem~\ref{thm:noise-kernel} can be seen to hold true also in $\R^d$, with the bounds on $\sigma$ and $\alpha$ replaced by dimension-dependent constants. Moreover, if the application at hand calls for genuinely non-stationary kernels, H\"older conditions could be analyzed using similar techniques.
	\end{remark}
	
	\section{Existence, uniqueness and Sobolev regularity of the HEIDIH model}
	
	\label{sec:regularity}
	
	In this section we discuss existence and uniqueness of the two components $X$ and $Y$ in the HEIDIH model~\eqref{eq:spde-system}. We start with the volatility process $Y$, which is the mild solution 
	\begin{equation}
		\label{eq:mild-stochastic-heat}
		Y(t) = \cU(t) Y(0) + \int^t_0 \cU(t-s) \dd W(s), \, t \in [0,T],
	\end{equation}
	to the stochastic heat equation
	\begin{equation}
		\label{eq:stochastic-heat}
		\dd Y(t) = \cA Y(t) \dd t + \dd W(t), \, t \in [0,T],
	\end{equation}
	starting at $Y(0)$ on $\R^+$. We recall that $\cA$ denotes the Laplacian $a\Delta$ with homogeneous zero boundary conditions at $x=0$ of either Dirichlet or Neumann type scaled by a diffusion coefficient $a$. 
	We derive spatial regularity results in a fractional Sobolev sense along with a pointwise-in-space Hölder regularity result. The latter is needed for the numerical approximation in the next section. With these results in place, we consider the full system~\eqref{eq:spde-system} and derive existence, uniqueness and fractional Sobolev regularity for the mild solution 
	\begin{equation}
		\label{eq:mild-transport}
		X(t) = \cS(t) X(0) + \int^t_0 \cS(t-s) \Gamma^Z(s) \dd B(s)
	\end{equation}
	under suitable assumptions on $X(0)$, $Z$ and $B$.
	
	\subsection{The stochastic heat equation}
	
	We pose~\eqref{eq:stochastic-heat} as an equation in the space $L^2(\R^+)$, but impose sufficient regularity of the components to guarantee that it takes values in $H^r(\R^+) \subset \cH^r$ for $r > 1/2$. We let the operator $\cA = a\Delta$ be the realization of the Laplacian in $L^2(\R^+)$. Since the spectrum $\sigma(-\cA)$ is contained in $[0,\infty]$, $-\cA + \epsilon$ is a sectorial operator for all $\epsilon > 0$. Fractional powers of the self-adjoint and positive definite operator $-\cA + \epsilon$ are therefore well-defined and $-\cA$ is the generator of an analytic semigroup \cite[Section~2.7,Theorem~3.1]{Y10}. By the proofs of~\cite[Theorems~1-2]{F67}, we have the following characterizations of the domains of the operators $(-\cA + \epsilon)^{r/2}$ for small $r \ge 0$:
	\begin{equation}
		\label{eq:sobolev_id}
		\dot{H}^r := \dom((-\cA + \epsilon)^{r/2}) = H^r(\R^+) \text{ if } r \in [0,\theta).
	\end{equation}
	In the case of Dirichlet boundary conditions we have $\theta = 1/2$, while in the case of Neumann boundary conditions we have $\theta = 3/2$. Here, we also made implicit use of the fact that the Bessel potential norm and the so called Sobolev--Slobodeckij norm (see, e.g., \cite[(1.70)]{Y10}) are equivalent in our setting, which follows from the existence of an extension operator, see, e.g., \cite[Theorem~1.33]{Y10}. The equality~\eqref{eq:sobolev_id} should be understood as the existence of a canonical isomorphism with norm equivalence, with respect to the Hilbert space structure of~$\dot{H}^r \hookrightarrow L^2(\R^+)$ when this is equipped with the graph norm $\norm[\dot{H}^r]{\cdot} :=	\norm[L^2]{(-\cA+\epsilon)^{r/2}\cdot}$. The sequence $(\dot{H}^r)_{r \ge 0}$ consists of spaces that are continuously and densely embedded into one another. By interpolation techniques (cf.\ \cite[Theorem~16.3]{Y10}) one may show that $\dot{H}^r \hookrightarrow H^r$ for all $r \ge 0$. 
	
	Let the Wiener process $W$ in~\eqref{eq:spde-system} have covariance kernel $q_w$ %
	fulfilling Assumption~\ref{ass:noise-kernel}\ref{ass:noise-kernel-iii} for some $r_w \ge 0$. If we regard $W$ as a $Q$-Wiener process in $L^2(\R^+)$, its covariance operator is given by $Q = I_{H_{q_w}\hookrightarrow L^2} I^*_{H_{q_w}\hookrightarrow L^2}$. Since $Q^{1/2}(L^2(\R^+)) = H_{q_w}(\R^+)$, we have $Q^{1/2}(L^2(\R^+)) \xhookrightarrow[]{\cL_2} \dot{H}^r$ if and only if $H_{q_w}(\R^+) \xhookrightarrow[]{\cL_2} H^r$, provided that $r < \min(r_W,\theta)$. Therefore, \cite[Assumption~3]{JR12} is fulfilled and we obtain the following existence, uniqueness and regularity result.
	\begin{theorem}[{\cite[Theorem~1]{JR12}}]
		\label{thm:heat-regularity}
		Suppose that the covariance kernel $q_w$ of $W$ fulfills Assumption~\ref{ass:noise-kernel}\ref{ass:noise-kernel-iii} and let $\rho := \min(r_W,\theta)$. Suppose also that $Y(0) \in \dot{H}^{\rho + 1}$. Then there exists a unique mild solution $Y = (Y(t))_{t \in [0,T]}$ to~\eqref{eq:mild-stochastic-heat}. Moreover, for all $r < \rho+1 $, there is a continuous modification of $Y$ in $\dot{H}^r$ such that $\sup_{t \in [0,T]} \E[\norm[\dot{H}^r]{Y(t)}^2] < \infty$ and there is a constant $C < \infty$ such that for all $t_1,t_2 \in [0,T]$ 
		\begin{equation}
			\label{eq:holder-in-dot-h}
			\E[\norm[\dot{H}^r]{Y(t_1) -  Y(t_2)}^2]^{1/2} \le C |t_2 - t_1|^{\min(1, \rho + 1 - r)/2}.
		\end{equation}
	\end{theorem}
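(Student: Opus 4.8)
The statement is quoted verbatim from \cite[Theorem~1]{JR12}, so the proof amounts to checking that the abstract hypotheses of that reference are met in our concrete situation; most of this verification has in fact already been assembled in the paragraph preceding the statement, and my plan is to collect those facts into the checklist required by \cite{JR12} and then invoke it. In particular, no genuinely new computation is needed: the work lies entirely in matching definitions and keeping track of Sobolev indices.

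First I would recall the abstract framework of \cite{JR12}: the equation is of the form $\dd Y = \cA Y\dd t + \dd W$ on a separable Hilbert space with $-\cA$ (equivalently $-\cA+\epsilon$) self-adjoint and positive definite, generating an analytic semigroup, together with a noise condition (their Assumption~3) which in this self-adjoint setting amounts to requiring $Q^{1/2}(L^2(\R^+)) \xhookrightarrow[]{\cL_2} \dot{H}^{r-1}$ for the relevant range of $r$. The structural requirements are supplied by the discussion above: $-\cA+\epsilon$ is sectorial, self-adjoint and positive definite for every $\epsilon>0$, $-\cA$ generates an analytic semigroup $\cU$, and $\dot{H}^{r} = \dom((-\cA+\epsilon)^{r/2}) = H^r(\R^+)$ with equivalent norms for $r \in [0,\theta)$, where $\theta = 1/2$ in the Dirichlet case and $\theta = 3/2$ in the Neumann case by \cite{F67}. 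I would emphasize that discreteness of the spectrum of $\cA$ is not needed, which is essential since the half-line Laplacian has continuous spectrum.

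Second, I would verify the noise condition. Viewing $W$ as a $Q$-Wiener process in $L^2(\R^+)$, the RKHS construction of Section~\ref{sec:wiener-processes} gives $Q = I_{H_{q_w}\hookrightarrow L^2} I^*_{H_{q_w}\hookrightarrow L^2}$ and, by \cite[Corollary~B.3]{DPZ14}, $Q^{1/2}(L^2(\R^+)) = H_{q_w}(\R^+)$; hence for any $\gamma \in \R$ one has $Q^{1/2}(L^2) \xhookrightarrow[]{\cL_2} \dot{H}^\gamma$ if and only if $H_{q_w} \xhookrightarrow[]{\cL_2} \dot{H}^\gamma$. Starting from Assumption~\ref{ass:noise-kernel}\ref{ass:noise-kernel-iii}, namely $H_{q_w} \xhookrightarrow[]{\cL_2} H^{r_W}(\R^+)$, the continuous embeddings $H^{r_W} \hookrightarrow H^\gamma$ for $0 \le \gamma \le r_W$, and the operator-ideal property of $\cL_2$, I obtain $H_{q_w} \xhookrightarrow[]{\cL_2} H^\gamma$ for all $\gamma \in [0, r_W]$ (and trivially for $\gamma < 0$, since $L^2$ embeds into the corresponding extrapolation space); combining this with the identification $\dot{H}^\gamma = H^\gamma$ valid for $\gamma < \theta$ yields $Q^{1/2}(L^2) \xhookrightarrow[]{\cL_2} \dot{H}^\gamma$ for every $\gamma < \rho = \min(r_W,\theta)$. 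Taking $\gamma = r-1$ shows that \cite[Assumption~3]{JR12} holds for every $r < \rho+1$, while the regularity imposed on $Y(0)$ is exactly the compatibility condition of \cite{JR12} (it is what makes $t \mapsto \cU(t)Y(0)$ Hölder continuous in $\dot{H}^r$ with the claimed exponent, via $\norm[\dot{H}^r]{(\cU(t_1-t_2)-I)\cU(t_2)Y(0)} \lesssim |t_1-t_2|^{\mu}\norm[\dot{H}^{r+2\mu}]{Y(0)}$). With all hypotheses in place, \cite[Theorem~1]{JR12} delivers existence and uniqueness of the mild solution~\eqref{eq:mild-stochastic-heat}, the bound $\sup_{t\in[0,T]}\E[\norm[\dot{H}^r]{Y(t)}^2] < \infty$, the continuous $\dot{H}^r$-valued modification for each $r < \rho+1$, and the temporal Hölder estimate~\eqref{eq:holder-in-dot-h}.

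The point requiring genuine care — and the one I would be most attentive to — is the index bookkeeping. The identity $\dot{H}^\gamma = H^\gamma$ only holds below the threshold $\theta$, so the attainable Sobolev regularity must be capped at $\rho = \min(r_W,\theta)$ rather than at $r_W$, and this cap propagates both into the spatial exponent $r < \rho+1$ and into the temporal exponent $\min(1,\rho+1-r)/2$ in~\eqref{eq:holder-in-dot-h}. In the temporal exponent the factor $(\rho+1-r)/2$ reflects how much of the available spatial smoothing is converted into temporal regularity, while the outer minimum with $1$ (equivalently, the ceiling at exponent $1/2$) reflects the temporal white-noise structure of $W$, beyond which the stochastic convolution $\int_0^t \cU(t-s)\dd W(s)$ cannot be Hölder-regular in time regardless of how smooth the noise is in space; the same ceiling then dominates the a priori faster deterministic part $\cU(\cdot)Y(0)$. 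I would double-check that this capping is applied consistently everywhere and that the range $r < \rho+1$ (strict) is never silently turned into $r \le \rho+1$.
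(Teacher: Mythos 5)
Your proposal is correct and follows essentially the same route as the paper: the paper's own justification consists precisely of verifying the structural setting (sectoriality of $-\cA+\epsilon$, the identification $\dot{H}^r = H^r(\R^+)$ for $r<\theta$ via \cite{F67}) and checking \cite[Assumption~3]{JR12} through $Q^{1/2}(L^2(\R^+)) = H_{q_w}(\R^+)$ together with Assumption~\ref{ass:noise-kernel}\ref{ass:noise-kernel-iii}, after which \cite[Theorem~1]{JR12} is invoked exactly as you do. Your additional bookkeeping of the indices (capping at $\rho=\min(r_W,\theta)$ and the ceiling at exponent $1/2$ in time) matches the paper's statement and adds no divergence in method.
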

	\begin{remark}
		In~\cite[Theorem~1]{JR12}, the regularity is restricted to $r < 2$. This is only due to the nonlinearities considered in \cite{JR12}, which are not present in our setting.
	\end{remark}

	By the Sobolev embedding theorem, we obtain from the estimate~\eqref{eq:holder-in-dot-h} that there is a constant $C < \infty$ such that for all $x \in \R^+$ and $t_1,t_2 \in [0,T]$, $\E[|Y(t_1,x)-Y(t_2,x)|^2] \le C |t_1-t_2|^{\min(1,r)}$ for all $r < \rho + 1/2$. If $r_W=0$, we therefore obtain that $Y(\cdot,x)$ is H\"older continuous with respect to the root mean squared norm with exponent up to $1/4$. Similarly, $Y(t,\cdot)$ is H\"older continuous with exponent up to $1/2$. However, also for such relatively rough noise, the H\"older exponents can be improved, provided that Assumption~\ref{ass:noise-kernel}\ref{ass:noise-kernel-iv} is fulfilled. This we show in the next proposition. A concrete example of a covariance kernel in this setting includes a weighted Mat\'ern kernel with smoothness parameter $\nu$ close to $0$, see Remark~\ref{rem:matern}. For the proof, we recall that the semigroup associated with $-\cA$ has an explicit representation by the reflection method. Indeed, with $\Phi(x,t) = e^{-x^2/(4 a t)}/\sqrt{4 \pi a t}$, $x \in \R, t > 0$, denoting the heat kernel,
	\begin{equation}
		\label{eq:halfline-heat-kernel}
		(\cU(t)v)(x) = \int_0^\infty (\Phi(x-y,t) \pm \Phi(x+y,t)) v(y) \dd y, \, t > 0, x \ge 0,
	\end{equation}
	for $v \in L^2(\R^+)$. The sign is positive for Neumann boundary conditions and negative for Dirichlet boundary conditions. This can be seen by the density of smooth functions with compact support in $L^2(\R^+)$. Alternatively, we may write
	\begin{equation}
		\label{eq:reflection-formula}
		(\cU(t)v)(x) = \int_{-\infty}^\infty \Phi(x-y,t) \tilde v(y) \dd y, \, t > 0, x \ge 0,
	\end{equation}
	where $\tilde v$ is an even extension of $v$ around $0$ for Neumann boundary conditions and an odd extension in the Dirichlet case.
	
	\begin{proposition}
		\label{prop:holder}
		Under the same conditions as in Theorem~\ref{thm:heat-regularity} together with Assumption~\ref{ass:noise-kernel}\ref{ass:noise-kernel-iv}, there exists for all $r < 1$ a constant $C < \infty$ such that 
		\begin{enumerate}[label=(\roman*)]
			\item \label{eq:time-holder} $\E[|Y(t,x)-Y(s,x)|^2]^{1/2} \le C (1 + s^{-r/2}\norm[L^\infty]{Y(0)}) |t-s|^{r/2}$, and
			\item \label{eq:space-holder} $\E[|Y(s,x)-Y(s,y)|^2]^{1/2} \le C (1 + s^{-r/2} \norm[L^\infty]{Y(0)}) |x-y|^{r}$
		\end{enumerate}
		for all $x,y \in \R^+$ and $T \ge t \ge s > 0$.
	\end{proposition}
	
	\begin{proof}	
		By differentiation of $\Phi(x,\cdot)$, we have, in the notation of~\eqref{eq:reflection-formula},
		\begin{equation*}
			|(\cU(t)v)(x)- (\cU(s)v)(x)| \le \int^\infty_{-\infty} \int^t_s \frac{(x-y)^2 + 2r}{4 r^2} \Phi(x-y,r) \tilde{v}(y) \dd r \dd y.
		\end{equation*}
		Since $x \mapsto \Phi(x,r)$ and $x \mapsto x^2\Phi(x,r)$ integrates to $1$ and $2 a r$, we obtain $\norm[L^\infty]{(\cU(t)-\cU(s))v} \lesssim (t-s) (s)^{-1} \norm[L^\infty]{v}$ for $0 < s \le t$. Moreover, $\cU$ is stable in $L^\infty$ in the sense that $\norm[L^\infty]{\cU(t)v} \le \norm[L^\infty]{v}$ for all $t \ge 0$. Combining these two estimates yields, for all $r \in [0,1]$, a constant $C < \infty$ such that for all $0<s\le t < \infty$ and $v \in L^\infty(\R^+)$,
		\begin{equation}
			\label{eq:prop:time-holder:1}
			\norm[L^\infty]{(\cU(t)-\cU(s))v} \le C s^{-r} (t-s)^{r} \norm[L^\infty]{v}.
		\end{equation}
		We now prove claim~\ref{eq:time-holder} for $Y(0) = 0$. The general case follows immediately by the estimates above, noting that $\norm[L^\infty]{Y(0)} < \infty$ by the Sobolev embedding theorem.
		
		Since $\delta_x$ is continuous on $H^r$ for $r > 1/2$, Theorem~\ref{thm:heat-regularity} justifies moving $\delta_x$ inside the It\^o integral below, so that by the It\^o isometry 
		\begin{equation}
			\label{eq:prop:time-holder:ito-calc}
			\begin{split}
				\E[|Y(t,x)-Y(s,x)|^2] &= 
				\E[|\delta_x (Y(t)-Y(s))|^2] \\
				&\lesssim  \E\left[\left| \int_{s}^{t} \delta_x \cU(t-r) \dd W(r) \right|^2\right] \\
				&\quad+  \E\left[\left| \int_0^{s} \delta_x (\cU(t-r) - \cU(s-r)) \dd W(r) \right|^2\right] \\
				&=\int_{s}^{t} \sum_{j=1}^\infty |(\cU(t-r) e_j)(x)|^2 \dd r \\ 
				&\quad+ \int_0^{s} \sum_{j=1}^\infty |((\cU(t-r) - \cU(s-r)) e_j)(x)|^2 \dd r.
			\end{split}
		\end{equation}
		Here, we let $(e_j)_{j=1}^\infty$ be an ONB of $H_{q_w}$ such that $\sum_{j = 1}^{\infty} \norm[L^\infty]{e_j}^2 < \infty$, see Assumption~\ref{ass:noise-kernel}\ref{ass:noise-kernel-iv}. Then, the claimed estimate follows by making use of~\eqref{eq:prop:time-holder:1} for the second term and stability of $\cU$ in $L^\infty$ for the first term.
		
		Claim~\ref{eq:space-holder} follows along the same lines.  By differentiation of $\Phi(t,\cdot)$, one may show that $|(\cU(s)v)(x)-(\cU(s)v)(y)| \le |x-y|\norm[L^\infty]{v}/\sqrt{4 \pi a s}$. By interpolation between this estimate and the stability estimate in $L^\infty$, along with an argument analogous to~\eqref{eq:prop:time-holder:ito-calc}, the claim is proven.
	\end{proof}
	
	\subsection{Regularity of the HEIDIH model}
	
	For the full model~\eqref{eq:spde-system}, we let $B$ and $W$ be Wiener processes with kernels $q_B$ and $q_W$. For $q_B$, we assume that Assumption~\ref{ass:noise-kernel}\ref{ass:noise-kernel-i} is fulfilled and for the components of $Y$ we consider the same setting as in~Theorem~\ref{thm:heat-regularity}. For the process $Z$, we make the following assumption.
	
	\begin{assumption}
		\label{ass:Z}
		Let, for some $r > 1/2$, $Z = (Z(t))_{t \in [0,T]}$ be an adapted stochastic process in $\cH^r$ fulfilling $\norm{Z(t)} = 1$ for all $t \in [0,T]$. Let $Z$ be such that the $\cL_2(\cH^r)$-valued process $\Gamma^Z$ given by $\Gamma^Z(t) := Y(t) \otimes Z(t), t \in [0,T]$, is predictable and fulfills
		\begin{equation*}
			\int^T_0 \E[\norm[\cL_2(H_{q_B},\cH^r)]{\Gamma^Z(t)}^2] \dd t < \infty.
		\end{equation*}
	\end{assumption}
	
	In~\cite{BS18}, two examples for $Z$ are studied in detail. First, the constant setting $Z(t) := \eta \in \cH^r$ for all $t \in [0,T]$ with $\norm[\cH^r]{\eta} = 1$. Second, the setting in which $\Gamma^Z(t)$ is the unique positive definite square root of $Y(t) \otimes Y(t)$, i.e., letting 
	\begin{equation*}
		Z(t) :=  \begin{cases}
			Y(t)/\norm[\cH]{Y(t)} &\text{ for } Y(t) \neq 0, \\
			1 &\text{ otherwise,} 
		\end{cases}
	\end{equation*}
	see~\cite[Proposition~4]{BS18}. Note that $1 \in \cH^r$ with $\norm[\cH^r]{1} = 1$. For these two examples, $\Gamma^Z$ is an adapted process with a continuous version. This follows from the continuity of $Y$, see~\cite[Lemma~5]{BS18}. We note that the authors of~\cite{BS18} assume that the covariance operator of $B$ is trace-class when confirming that the integral in Assumption~\ref{ass:Z} is finite for these two examples, see~\cite[Lemma~8]{BS18}. For us, this translates to $H_{q_B} \xhookrightarrow[]{\cL_2} \cH^r$. However, it is sufficient to require that $H_{q_B} \hookrightarrow \cH^r$, since
	\begin{equation}
		\label{eq:Gamma-Z-norm}
		\begin{split}
			\int^T_0 \norm[\cL_2(\cH^r)]{\Gamma^Z(t)}^2 \dd t &= \sum_{j = 1}^\infty \int^T_0 |\inpro[\cH^r]{Z(t)}{e_j}|^2 \norm[\cH^r]{Y(t)}^2 \dd t \\
			&= \sum_{j = 1}^\infty \int^T_0 |\inpro[H_{q_w}]{I^*_{H_{q_w} \hookrightarrow \cH^r} Z(t)}{e_j}|^2 \norm[\cH^r]{Y(t)}^2 \dd t \\
			&= \int^T_0 \norm[H_{q_w}]{I^*_{H_{q_w} \hookrightarrow \cH^r} Z(t)}^2 \norm[\cH^r]{Y(t)}^2 \dd t \\
			&\le \norm[\cL(H_{q_w},\cH^r)]{I_{H_{q_w} \hookrightarrow \cH^r}} \int^T_0 \norm[\cH^r]{Y(t)}^2 \dd t
		\end{split}
	\end{equation}
	almost surely and $\norm[\cL_2(H_{q_B},\cH^r)]{\Gamma} \lesssim \norm[\cL_2(\cH^r)]{\Gamma}$ for $\Gamma \in \cL_2(\cH^r)$.
	
	The regularity of $X$, as well as its existence and uniqueness as a process in $\cH^r$, now follows from \cite[Theorem~6.10]{DPZ14}. This makes use of the fact that $\cS$ is a contraction $C_0$-semigroup on $\cH^r$. This is true under the convention $(\cS(t)f)(x) = \tilde f(x+t) + a$ for $f = \tilde f + a \in H^r \oplus \R = \cH^r$, see~\cite[Appendix~A.1]{ET07}.
	
	We summarize these remarks in the following theorem.
	
	\begin{theorem}
		\label{thm:transport-regularity}
		Suppose that for some $r > 1/2$, Assumption~\ref{ass:Z} is fulfilled and $X(0) \in \cH^r$. Suppose also that the conditions of Theorem~\ref{thm:heat-regularity} as well as Assumption~\ref{ass:noise-kernel}\ref{ass:noise-kernel-i} hold true for some $\rho = r_B = r$. Then there exists a unique mild solution $X = (X(t))_{t \in [0,T]}$ given by~\eqref{eq:mild-transport} which is $\cH^r$-valued and has a continuous modification.
	\end{theorem}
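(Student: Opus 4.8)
The plan is to deduce the statement by verifying that the hypotheses of \cite[Theorem~6.10]{DPZ14} are met for the linear equation given by the first line of~\eqref{eq:spde-system}. Since the diffusion coefficient $\Gamma^Z$ depends on $t$ and $\omega$ but not on the state $X$, this is an equation with additive noise whose mild solution is precisely~\eqref{eq:mild-transport}; uniqueness is then immediate from the defining variation-of-constants formula (the homogeneous equation $\dd u = \cC u\dd t$ with $u(0)=0$ has only the zero mild solution $u(t)=\cS(t)\cdot 0$). Concretely, three conditions must be checked: (i) $\cC$ generates a $C_0$-semigroup on $\cH^r$; (ii) $X(0)$ is an $\cF_0$-measurable $\cH^r$-valued random variable; and (iii) $\Gamma^Z$ is a predictable $\cL_2(H_{q_B},\cH^r)$-valued process with $\int_0^T \E[\norm[\cL_2(H_{q_B},\cH^r)]{\Gamma^Z(t)}^2]\dd t < \infty$.

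Condition (ii) is part of the hypothesis, and for (i) one quotes \cite[Appendix~A.1]{ET07}: under the convention $(\cS(t)f)(x) = \tilde f(x+t) + a$ for $f = \tilde f + a \in H^r(\R^+)\oplus\R = \cH^r$, the left-shift semigroup $\cS$ is a contraction $C_0$-semigroup on $\cH^r$ with generator $\cC = \partial/\partial x$. For (iii), predictability is exactly what Assumption~\ref{ass:Z} provides, so the substance is the integrability bound. Here I would first apply Theorem~\ref{thm:heat-regularity}, which is legitimate because the hypothesis fixes $\rho = r$ and $r < \rho + 1$: it yields a well-defined $Y$ taking values in $\dot H^{r} \hookrightarrow H^r(\R^+) \subset \cH^r$ with $\sup_{t\in[0,T]}\E[\norm[\cH^r]{Y(t)}^2] < \infty$. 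Combining the ideal property of $\cL_2$, the chain of estimates in~\eqref{eq:Gamma-Z-norm} (using $\norm[\cH^r]{Z(t)} = 1$ and $H_{q_B}\hookrightarrow\cH^r$ from Assumption~\ref{ass:noise-kernel}\ref{ass:noise-kernel-i}), and the moment bound on $Y$, one obtains
\begin{equation*}
\int_0^T \E[\norm[\cL_2(H_{q_B},\cH^r)]{\Gamma^Z(t)}^2]\dd t \lesssim \int_0^T \E[\norm[\cL_2(\cH^r)]{\Gamma^Z(t)}^2]\dd t \lesssim \int_0^T \E[\norm[\cH^r]{Y(t)}^2]\dd t \le T\sup_{t\in[0,T]}\E[\norm[\cH^r]{Y(t)}^2] < \infty.
\end{equation*}
Hence (iii) holds, and \cite[Theorem~6.10]{DPZ14} furnishes the unique mild solution~\eqref{eq:mild-transport}, which is $\cH^r$-valued and admits a continuous modification.

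I do not expect a genuine obstacle — the argument is essentially a verification — but two points require care. First, $r > 1/2$ may exceed the threshold $\theta$ beyond which $\dot H^r$ and $H^r(\R^+)$ cease to coincide ($\theta = 1/2$ for Dirichlet conditions), so the passage from the $\dot H^r$-moment bound for $Y$ to an $\cH^r$-moment bound must route through the continuous embedding $\dot H^r \hookrightarrow H^r(\R^+)$, valid for every $r \ge 0$. Second, since $\cS$ is not analytic, the continuity of the stochastic convolution $t\mapsto\int_0^t \cS(t-s)\Gamma^Z(s)\dd B(s)$ in $\cH^r$ cannot be read off from parabolic smoothing; it rests instead on $\cS$ being a contraction semigroup, so that a maximal inequality for stochastic convolutions applies — which is exactly what makes \cite[Theorem~6.10]{DPZ14} available here without moment assumptions beyond $L^2$.
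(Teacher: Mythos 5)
Your proposal is correct and follows essentially the same route as the paper: existence, uniqueness, $\cH^r$-valuedness and the continuous modification are obtained from \cite[Theorem~6.10]{DPZ14}, using that $\cS$ is a contraction $C_0$-semigroup on $\cH^r$ by \cite[Appendix~A.1]{ET07} and that $\Gamma^Z$ is predictable and square-integrable in $\cL_2(H_{q_B},\cH^r)$. Note only that the integrability bound you re-derive via~\eqref{eq:Gamma-Z-norm} and Theorem~\ref{thm:heat-regularity} is already postulated in Assumption~\ref{ass:Z}, so this step is a harmless redundancy that mirrors the paper's own discussion (preceding the theorem) of why $H_{q_B}\hookrightarrow\cH^r$ suffices in the concrete examples of $Z$.
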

	
	We end this section by commenting on the special case that $Z(t) := \eta \in \cH^r$ for all $t \in [0,T]$. Let $(e_j)_{j=1}^\infty$ be an ONB of $H_{q_B}$. By~\eqref{eq:wiener-construction} and approximation by simple functions, it follows from independence of $B$ and $W$ that for $x \in \R^+$ 
	\begin{equation}
		\label{eq:X-with-eta-simple-version}
		\begin{split}
			\delta_x \int^t_0 \cS(t-s) \Gamma^Z(s) \dd W(s) &= \sum^\infty_{j=1} \int^t_0 Y(s,x + t-s) \inpro{\eta}{e_k} \dd \beta_k(s) \\
			&= \norm[H_{q_B}]{I^*_{H_{q_B} \hookrightarrow \cH^r} \eta} \int^t_0 Y(s,x + t-s) \dd \tilde \beta(s),
		\end{split} 
	\end{equation}
	where 
	\begin{equation}
		\label{eq:beta-tilde}\tilde \beta(t) := \norm[H_{q_B}]{I^*_{H_{q_B} \hookrightarrow \cH^r} \eta}^{-1}  \sum_{j = 1}^\infty \inpro{\eta}{e_j} \beta_j(t), \, t \in [0,T],  
	\end{equation} 
	is a real-valued Wiener process. We see that in this case the stochastic volatility is directly given by $Y$ while $\eta$ is a global (in $x$) scaling parameter. A convenient choice for $\eta$ is to first set $\tilde \eta := \sum^N_{j=1} a_j (1 + m_r(x_j-\cdot))$ for some constants $a_j$ and points $x_j \in \R^+$, $j = 1, \ldots, N \in \N$, where we recall that $(x,y) \mapsto 1 + m_r(x - y)$ is the kernel of $\cH^r$. This class of functions is dense in $\cH^r$, see \cite[Corollary~2]{BT04}. Then $$\norm[\cH^r]{\tilde\eta}^2 = \sum^N_{i,j=1} a_i (1 + m_r(x_i-x_j))a_j$$ so with $\eta := \tilde \eta / \norm[\cH^r]{\tilde \eta}$ we obtain the expression
	\begin{equation*}
		\norm[H_{q_B}]{I^*_{H_{q_B} \hookrightarrow \cH^r} \eta}^2 = \frac{\sum^N_{i,j=1} a_i q_B(x_i,x_j) a_j}{\sum^N_{i,j=1} a_i (1 + m_r(x_i-x_j))a_j}
	\end{equation*}
	which can readily be computed.
	
	\begin{remark}
		\label{rem:example-figure}
		An example of the setting above with $Z(t) = \eta \in \cH^r$, is shown in Figures~\ref{subfig:Y} and~\ref{subfig:X}. Here $X(0)$ and $Y(0)$ are smooth bump functions while $q_W(x,y) = w(x) q_s(x-y) w(y)$, where $w(x) = 10^{-1/2} (1 + x^2)^{-3/4}$ and $q_s$ is a Mat\'ern kernel with $\nu = \mu = 0.1$ and unit variance, see Remark~\ref{rem:matern}. In $\cA = a\Delta$ we have set $a = 0.05$. 
		By Theorems~\ref{thm:heat-regularity} and~\ref{thm:transport-regularity}, we obtain that both $X$ and $Y$ take values in $\cH^{r}$ for $r = 1.1$. We have chosen $\eta$ and $q_B$ in such a way that $H_{q_B} \hookrightarrow \cH^r$ with $\norm[H_{q_B}]{I^*_{H_{q_B} \hookrightarrow \cH^r} \eta} = 1$.
	\end{remark}

	\section{Numerical approximation}
	\label{sec:numerical}
	
	A rigorous understanding of convergence rates in numerical approximations of SPDEs is vital for several reasons. First, they allow us to theoretically understand different approximation methods with respect to computational speed. Second, if the approximations are used in Monte Carlo methods, the convergence rates can help us to choose sample sizes in an optimal way \cite{L16}. Third, understanding how different sources of errors contribute in different ways to the overall error lets us tune the various parts of the algorithm, so that each error source contributes equally, leading to a computationally efficient algorithm. We return to this last part in Section~\ref{subsec:fem-section}. 
	
	Arguably, most literature on numerical approximation of SPDEs focus on the analysis of errors as measured in a spatial $L^2$-norm in some mean square sense, see, e.g., \cite{JK09,K14,LPS14} and the references within. This is especially true for finite element approximations. In the context of SPDE models of forward prices, the convergence discussed for the discontinuous Galerkin finite element method in~\cite{BB14} is of this type. In our setting, this would translate to errors of the form $\E[\norm[L^2(\R^+)]{X(t_i)-\hat X(t_i)}^2]$ in the numerical approximation of the HEIDIH model, where $\hat X$ refers to a given approximation and $(t_i)_{i = 1}^\infty$ are points on a temporal grid. However, the spatial argument $x$ in $X(t,x)$ refers to time to maturity. To price contracts delivering a commodity at a fixed time, therefore, we should instead consider pointwise-in-space errors, i.e., errors of the form $\E[|{X(t_i,x_j)-\tilde X(t_i,x_j)}|^2]^{1/2}$ where $(t_i,x_j)_{i,j=1}^\infty$ are points on a space-time grid. This is the approach we take below, where we first prove a general error decomposition formula for $X$. We then restrict ourselves to the setting that $Z(t) = \eta$ for all $t \in [0,T]$ and $Y$ is equipped with Dirichlet boundary conditions. In this case we derive sharp convergence rates with respect to pointwise-in-space errors for fully discrete approximations of $X$ and $Y$.
	
	\subsection{Error decomposition formula for a finite difference scheme in space and time}
	
	\label{subsec:error-decomp}
	
	We consider approximating the process $X$ on a spatiotemporal grid $(t_i,x_j)_{i,j \in \N_0} = (i k,jk)_{i,j \in \N_0}$ using finite differences with equal uniform step sizes $k$ in both time and space. The approximation is denoted by $\hat X$ and we set $\hat X(0) = X(0) \in \cH^r$. Otherwise, $\hat X$ is defined on the grid by the recursion
	\begin{equation}\\
		\label{eq:X-recursion}
		\hat X(t_{i+1},x_j) = \hat X(t_{i},x_{j+1}) + \delta_{x_j} (\hat Y(t_{i})\otimes\hat Z(t_{i})) \Delta B^{i}, \, i,j \ge 0,
	\end{equation}
	where the $\cH^r$-valued stochastic processes $\hat Y, \hat Z$ are given approximations $Y$ and $Z$, respectively, and $\Delta B^i = B(t_{i+1})-B(t_i)$. Iterating this, we obtain in closed form
	\begin{equation}
		\label{eq:X-closed-form-approximation}
		\hat X(t_{n},x_j) = \delta_{x_j} \cS(t_{n}) X(0) + \sum^{n-1}_{i=0} \int^{t_{i+1}}_{t_i} \delta_{x_j} \cS(t_{n} - t_{i+1}) (\hat Y(t_{i}) \otimes \hat Z(t_i) ) \dd B(r).
	\end{equation}
	This leads to the following error decomposition. We remark that since we do not use any particular property of $Y$, this result applies to any Heston stochastic volatility model of forward prices in the sense of~\cite{BDNS21,BS18}. Note also that, due to the use of a grid with equal step sizes in both space and time, the part of~\eqref{eq:mild-transport} associated with $X(0)$ is solved exactly. 
	\begin{proposition}
		\label{prop:X-error-decomp}
		Suppose that $\hat Y, \hat Z$ are such that $t \mapsto \hat{\Gamma}^{\hat{Z}_t} = \hat Y(t) \otimes \hat Z(t)$ is a predictable  $\cL_2(\cH^r)$-valued process fulfilling 
		\begin{equation*}
			\int^T_0 \E[\norm[\cL_2(H_{q_B},\cH^r)]{\hat{\Gamma}^{\hat{Z}}(t)}^2] \dd t< \infty.
		\end{equation*}
		The approximation error $X(t_{n},x_{j}) - \hat X(t_{n},x_{j})$ at the points $t_{n} =n k \le T$ and $x_j = jk \in \R^+$ can be decomposed as
		\begin{align*}
			\bigE{\left|X(t_{n},x_{j}) - \hat X(t_{n},x_{j})\right|^2} &= \sum^{n-1}_{i=0}  \int^{t_{i+1}}_{t_i} \E \Big[Y(r,t_{n} + x_{j} - r)^2 \norm[H_{{q_B}}]{I^*_{H_{q_B} \hookrightarrow \cH^r} \hat Z(r)}^2 \\ &\quad -2 Y(r,t_{n} + x_{j} - r) \hat Y(t_i,t_{n} + x_{j} -t_{i+1}) \\ &\qquad\times \inpro[H_{{q_B}}]{I^*_{H_{q_B} \hookrightarrow \cH^r} Z(r)}{I^*_{H_{q_B} \hookrightarrow \cH^r} \hat Z(t_i)} \\
			&\quad+\hat Y(t_i,t_{n} + x_{j} -t_{i+1})^2 \norm[H_{{q_B}}]{I^*_{H_{q_B} \hookrightarrow \cH^r} \hat Z(t_i)}^2\Big] \dd r.
		\end{align*}
		In the special case that $Z(t) = \hat Z (t) = \eta \in \cH^r$,
		\begin{align*}
			&\bigE{\left|X(t_{n},x_{j}) - \hat X(t_{n},x_{j})\right|^2} \\
			&\quad= \norm[H_{q_B}]{I^*_{H_{q_B} \hookrightarrow \cH^r} \eta}^2 \sum^{n-1}_{i=0} \int^{t_{i+1}}_{t_i} \bigE{\left|Y(r,t_{n} + x_{j}-r) - \hat Y(t_i,t_{n} + x_{j}- t_{i+1})\right|^2} \dd r.
		\end{align*}
	\end{proposition}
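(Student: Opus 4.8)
The strategy is to express the pointwise error as a single Itô integral over $[0,t_n]$ and then apply the Itô isometry. Since $\cH^r$ is a reproducing kernel Hilbert space for $r>1/2$, the evaluation functional $\delta_{x_j}\in\cL(\cH^r,\R)$ is bounded and therefore commutes with the $\cH^r$-valued stochastic integral (the same move as in the proof of Proposition~\ref{prop:time-holder}). Subtracting~\eqref{eq:X-closed-form-approximation} from the evaluation of~\eqref{eq:mild-transport} at $x_j$ and $t=t_n$, the deterministic contributions $\delta_{x_j}\cS(t_n)X(0)$ cancel exactly: this is where the equal step sizes $k$ in space and time enter, since the recursion~\eqref{eq:X-recursion} realizes the left-shift $\cS$ exactly on the grid (so that its noise-free part equals $X(0)(x_{j+n}) = X(0)(x_j+t_n) = (\cS(t_n)X(0))(x_j)$) and $\hat X(0)=X(0)$. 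What remains is
\[
X(t_n,x_j) - \hat X(t_n,x_j) = \sum_{i=0}^{n-1}\int_{t_i}^{t_{i+1}}\Big[\delta_{x_j}\cS(t_n-r)\Gamma^Z(r) - \delta_{x_j}\cS(t_n-t_{i+1})\hat\Gamma^{\hat Z}(t_i)\Big]\dd B(r).
\]

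Next I would rewrite the integrand as an element of $\cL_2(H_{q_B},\R)\cong H_{q_B}$. Using the elementary identities $\cS(\tau)(u\otimes v) = (\cS(\tau)u)\otimes v$ and $(u\otimes v)\,I_{H_{q_B}\hookrightarrow\cH^r} = u\otimes I^*_{H_{q_B}\hookrightarrow\cH^r}v$, together with $(\cS(\tau)u)(x)=u(\tau+x)$, one finds for $r\in[t_i,t_{i+1})$ that $\delta_{x_j}\cS(t_n-r)\Gamma^Z(r)$ restricted to $H_{q_B}$ is the rank-one map $w\mapsto Y(r,x_j+t_n-r)\inpro[H_{q_B}]{I^*_{H_{q_B}\hookrightarrow\cH^r}Z(r)}{w}$, i.e. the vector $Y(r,x_j+t_n-r)\,I^*_{H_{q_B}\hookrightarrow\cH^r}Z(r)\in H_{q_B}$, and likewise $\delta_{x_j}\cS(t_n-t_{i+1})\hat\Gamma^{\hat Z}(t_i)$ corresponds to $\hat Y(t_i,x_j+t_n-t_{i+1})\,I^*_{H_{q_B}\hookrightarrow\cH^r}\hat Z(t_i)$. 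This integrand is predictable and square-integrable on $[0,t_n]$ by Assumption~\ref{ass:Z} and the standing hypothesis on $\hat\Gamma^{\hat Z}$ (using that $\cS$ is a contraction and $\delta_{x_j}$ is bounded), so the Itô isometry applies on each subinterval.

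Applying the Itô isometry, $\bigE{|X(t_n,x_j)-\hat X(t_n,x_j)|^2}$ becomes $\sum_{i=0}^{n-1}\int_{t_i}^{t_{i+1}}\E[\norm[\cL_2(H_{q_B},\R)]{\,\cdot\,}^2]\dd r$, the squared norm being that of the difference of the two $H_{q_B}$-vectors identified above. Expanding $\norm[H_{q_B}]{a\xi_1-b\xi_2}^2 = a^2\norm[H_{q_B}]{\xi_1}^2 - 2ab\inpro[H_{q_B}]{\xi_1}{\xi_2} + b^2\norm[H_{q_B}]{\xi_2}^2$ with $a = Y(r,x_j+t_n-r)$, $b = \hat Y(t_i,x_j+t_n-t_{i+1})$, $\xi_1 = I^*_{H_{q_B}\hookrightarrow\cH^r}Z(r)$ and $\xi_2 = I^*_{H_{q_B}\hookrightarrow\cH^r}\hat Z(t_i)$, and moving $\E$ inside the finite sum and integral, yields the first claimed decomposition. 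The special case $Z(t)=\hat Z(t)=\eta$ is then immediate: $\xi_1=\xi_2=I^*_{H_{q_B}\hookrightarrow\cH^r}\eta$, so $\norm[H_{q_B}]{I^*_{H_{q_B}\hookrightarrow\cH^r}\eta}^2$ factors out and the bracket collapses to $(a-b)^2$.

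The computations are essentially routine; the only delicate point is the bookkeeping when passing from the $\cH^r$-valued stochastic integral to its evaluation at a point — in particular the isometric identification $\cL_2(H_{q_B},\R)\cong H_{q_B}$ and the systematic appearance of the adjoint embedding $I^*_{H_{q_B}\hookrightarrow\cH^r}$ — together with checking predictability and the integrability bound needed to legitimately invoke the Itô isometry.
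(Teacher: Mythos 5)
Your argument is correct and follows essentially the same route as the paper: evaluate at $x_j$ using $\delta_{x_j}\in\cL(\cH^r,\R)$, apply the It\^o isometry over the subintervals, and compute the $\cL_2(H_{q_B},\R)$-norm of the rank-one integrands via the adjoint embedding $I^*_{H_{q_B}\hookrightarrow\cH^r}$ exactly as in the computation~\eqref{eq:Gamma-Z-norm}. You simply spell out more explicitly the cancellation of the $\delta_{x_j}\cS(t_n)X(0)$ terms and the identification of the integrand in $H_{q_B}$, which the paper leaves implicit.
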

	
	\begin{proof}
		Note that since $\delta_{x_j} \in \cL(\cH^r,\R)$, the It\^o isometry immediately yields
		\begin{align*}
			&\bigE{\left|X(t_{n},x_{j}) - \hat X(t_{n},x_{j})\right|^2} \\
			&\hspace{0.25em}= \sum^{n-1}_{i=0}  \int^{t_{i+1}}_{t_i} \E \Big[ \norm[\cL_2(H_{q_B},\cH^r)]{ Y(r,t_{n} + x_{j} - r) \inpro[\cH^r]{Z(r)}{\cdot} - \hat Y(r,t_{n} + x_{j} - t_{i+1}) \inpro[\cH^r]{\hat{Z}(r)}{\cdot} }^2\Big] \dd r
		\end{align*}
		and the result follows as in~\eqref{eq:Gamma-Z-norm}.
	\end{proof}
	
	\subsection{A localized finite element discretization of the stochastic heat equation}
	\label{subsec:fem-section}
	
	The second part of Proposition~\ref{prop:X-error-decomp} shows that in the case that $Z(t) = \eta$, it suffices to deduce a pointwise-in-space error estimate for an approximation of $Y$ in order to derive a corresponding estimate for $X$. We turn to this setting now, and assume in addition that $\cA = a\Delta$ is equipped with Dirichlet zero boundary conditions at $x = 0$.
	
	The approximation scheme we use for $Y$ below is semi-implicit in time. This means that we must formulate it on a finite domain $(0,D)$ for sufficiently large $D$. From~\eqref{eq:X-closed-form-approximation} we see that we must at least take $D \ge 2 T - k $ if we want to evaluate $X$ up to $t = T$. We restrict the initial condition and the noise using the restriction operator $R_{\R^+ \to (0,D)}$ and introduce an artificial boundary condition at $x=D$. This causes a so called localization error. Such errors were recently investigated in detail in the Walsh setting in \cite{C22}, considering localization of stochastic heat equations on $\R$ to $(0,D)$. We use similar arguments for the semigroup framework of~\cite{DPZ14} that we have adopted here.
	
	We let $\cA_D := a\Delta$ denote the realization in $L^2{(0,D)}$ of the Laplacian with Dirichlet zero boundary conditions at $x = 0$ and $x = D$. The analytic semigroup $\cS_D = (\cS_D(t))_{t \ge 0}$ of contractions generated by $\cA_D$ maps $L^2(0,D)$ into a space of continuous and bounded functions on $[0,D]$. It has an explicit representation by
	\begin{equation*}
		(\cS_D(t) v)(x) = \int^D_0 \Phi_D(x,y,t) v(y) \dd y
	\end{equation*}
	for $x \in [0,D], t > 0$ and $v \in L^2((0,D))$, where
	\begin{equation*}
		\Phi_D(x,y,t) := \sum^\infty_{j=1} e_{D,j}(x) e_{D,j}(y) e^{- \lambda_{D,j} t},
	\end{equation*}
	with $\lambda_{D,j} := a\pi^2 j^2 / D^2$ and $e_{D,j}(x):=\sqrt{2/D}\sin(\pi j x /D)$ for $j \in \N$, $x \in [0,D]$.
	Directly from this representation, we obtain the rescaling property
	\begin{equation}
		\label{eq:rescale-semigroup}
		(\cS_D(t)v)(x) = \big(\cS_1(t/D^2)v(\cdot/D)\big)(x/D), \, x \in [0,D], t > 0.
	\end{equation}
	Using the fact that the sine functions $x \mapsto \sin(\pi j x)$ are uniformly bounded in $L^\infty$ with respect to $j$, this can be used to see that there is a constant $C < \infty$  such that for all $D > 0$, $v \in L^\infty((0,D))$ and $0 < s \le t < \infty$,
	\begin{equation}
		\label{eq:D-heat-holder-singularity}
		\norm[L^\infty]{(\cS_D(t) - \cS_D(s))v} \le s^{-1} (t-s) \norm[L^\infty]{v}.
	\end{equation} 
	Moreover, for all $D > 0$ and $t < \infty$, $\cS_D$ is stable in the sense that
	\begin{equation*}
		\norm[L^\infty]{\cS_D(t)v} \le \norm[L^\infty]{v}.
	\end{equation*}

	We now let $Y_D$ be the mild solution
	\begin{equation*}
		Y_D(t) = \cS_D(t)Y_D(0) + \int^{t}_0 \cS_D(t-s) R_{\R^+ \to (0,D)} \dd W(s), \, t \in [0,T],
	\end{equation*}to 
	\begin{equation*}
		\dd Y_D(t) = \cA_D Y_D (t) \dd t + R_{\R^+ \to (0,D)} \dd W(t), \, t \in [0,T],
	\end{equation*}
	with initial condition $Y_D(0) = R_{\R^+ \to (0,D)} Y(0)$.
	Fractional powers $(-\cA_D)^{r/2}$, $r \ge 0$ are well-defined since $-\cA_D$ is self-adjoint and positive definite. They are explicitly given by 
	\begin{equation*}
		(-\cA_D)^{r/2} v := \sum_{j = 1}^\infty (\lambda_{D,j})^{r/2} \inpro[L^2((0,D))]{v}{e_{D,j}} e_{D,j}
	\end{equation*}
	in terms of the eigenvalues $(\lambda_{D,j})_{j=1}^\infty$ and eigenfunctions $(e_{D,j})_{j=1}^\infty$ of $-\cA_D$, cf.\ \cite[Appendix~B]{K14}.
	This is well-defined for $v \in L^2$ such that the sequence on the right hand side is summable. This space is denoted by $\dot{H}_D^r := \dom((-\cA_D)^{r/2})$ and by another appeal to \cite[Theorem~1]{F67},
	\begin{equation}
		\label{eq:sobolev_id_D}
		\dot{H}_D^r = H^r((0,D)) \text{ if } r \in [0,1/2)
	\end{equation}
	with equivalent norms. 
	
	Under Assumption~\ref{ass:noise-kernel}\ref{ass:noise-kernel-iii}, the ideal property of $\cL_2(H_{q_W},\dot{H}_D^r)$ yields that for $r < \rho := \min(r_W,1/2)$
	\begin{equation}
		\label{eq:heat-finite-domain-well-posed-hs-estimate}
		\begin{split}
			\hspace{2em}\norm[\cL_2(H_{q_W},\dot{H}_D^r)]{R_{\R^+ \to (0,D)}} \le &\norm[\cL(H^r((0,D)),\dot{H}_D^r)]{I_{H^r((0,D)) \hookrightarrow \dot{H}_D^r}} \norm[\cL(H^r(\R^+),H^r((0,D)))]{R_{\R^+ \to (0,D)}} \\
			&\quad\times
			\norm[\cL_2(H_{q_W},H^{r}(\R^+))]{I_{H_{q_W} \hookrightarrow H^{r}(\R^+)}}.
			\raisetag{-1.5\baselineskip}
		\end{split}
	\end{equation}
	The setting of Theorem~\ref{thm:heat-regularity} therefore suffices to ensure that the stochastic convolution $t \mapsto Y_D(t) - \cS_D(t)Y_D(0)$ has a continuous modification in $\dot{H}^r$ for all $r < \rho + 1$. This may not hold for the entire process $t \mapsto Y_D(t)$ since $Y(0) \in \dot{H}^{\rho+1}$ does not ensure that $Y_D(0) \in \dot{H}_D^{\rho + 1}$. However, $Y(0) \in \dot{H}^{\rho+1}$ does guarantee that $Y_D(0)$ is bounded and Hölder continuous with exponent $r < \rho + 1/2$. We use these properties to deduce the following pointwise-in-space localization error between $Y$ and $Y_D$.

	\begin{proposition}
		\label{prop:localization-heat}
		Let the assumptions of Theorem~\ref{thm:heat-regularity} along with Assumption~\ref{ass:noise-kernel}\ref{ass:noise-kernel-iv} be satisfied. Then, there is a constant $C<\infty$ such that for all $D \ge 1$, $t \in [0,T]$ and $x \in [0,D]$, 
		\begin{align*}
			\bigE{\left|Y(t,x) - Y_D(t,x)\right|^2}^{1/2} \le C e^{-(D-x)^2/(8 a T)}.
		\end{align*}
	\end{proposition}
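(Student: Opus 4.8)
The plan is to estimate the pointwise-in-space error $Y(t,x)-Y_D(t,x)$ by splitting it into the deterministic (initial-data) part and the stochastic convolution part, and to control each through the explicit reflection-kernel representations \eqref{eq:reflection-formula} and the series for $\Phi_D$. The key observation is that the localization error is governed by the difference between the whole-line heat kernel $\Phi(x-y,t)$ (extended by reflection to $\R$) and the finite-domain kernel $\Phi_D(x,y,t)$; for $x$ well inside $(0,D)$ this difference is exponentially small because the Dirichlet condition at $x=D$ only affects points near $D$. Concretely, the method of images gives $\Phi_D(x,y,t)=\sum_{n\in\Z}\bigl(\Phi(x-y+2nD,t)-\Phi(x+y+2nD,t)\bigr)$, so the discrepancy between $\Phi_D$ and the half-line kernel $\Phi(x-y,t)-\Phi(x+y,t)$ consists only of the $n\neq 0$ terms, each of which carries a Gaussian factor $e^{-(D-x)^2/(8aT)}$-type bound after estimating $|x\pm y+2nD|\ge D-x$ for $y\in[0,D]$ and using $t\le T$.

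First I would handle the initial-data contribution: since $Y(0)\in\dot H^{s+1}\subset H^1(\R^+)$ is bounded and Hölder continuous, the difference $\cU(t)Y(0)(x)-\cS_D(t)Y_D(0)(x)$ is an integral of $Y(0)(y)$ against the kernel discrepancy above, and the exponential Gaussian tail bound together with $\norm[L^\infty]{Y(0)}<\infty$ yields the claimed rate pointwise and deterministically. Second, for the stochastic convolution, I would use the Itô isometry together with Assumption~\ref{ass:noise-kernel}\ref{ass:noise-kernel-iv}, exactly as in the proof of Proposition~\ref{prop:time-holder}: picking an ONB $(e_j)_{j=1}^\infty$ of $H_{q_W}$ with $\sum_j\norm[L^\infty]{e_j}^2<\infty$, one writes
\begin{align*}
\E[|Y(t,x)-Y_D(t,x)|^2] \lesssim \int_0^t \sum_{j=1}^\infty \bigl|\bigl((\cU(t-\sigma)-\cS_D(t-\sigma)R_{\R^+\to(0,D)})e_j\bigr)(x)\bigr|^2\dd\sigma,
\end{align*}
and then bounds $\norm[L^\infty]{(\cU(\tau)-\cS_D(\tau)R_{\R^+\to(0,D)})v}\lesssim e^{-(D-x)^2/(8a\tau)}\norm[L^\infty]{v}$ uniformly in $\tau\in(0,T]$ using the kernel discrepancy estimate. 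Since $e^{-(D-x)^2/(8a\tau)}\le e^{-(D-x)^2/(8aT)}$ for $\tau\le T$ and $\sum_j\norm[L^\infty]{e_j}^2<\infty$, integrating over $\sigma\in[0,t]$ and taking square roots gives the result. Care must be taken that the evaluation functional $\delta_x$ may be moved inside the Itô integral, which is justified by $\dot H^r\hookrightarrow H^r\hookrightarrow C_b$ for $r>1/2$ as in Theorem~\ref{thm:heat-regularity} and the regularity of $Y_D$ noted before the proposition.

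The main obstacle I anticipate is obtaining the clean, uniform-in-$\tau$ sup-norm bound $\norm[L^\infty]{(\cU(\tau)-\cS_D(\tau)R_{\R^+\to(0,D)})v}\lesssim e^{-(D-x)^2/(8a\tau)}\norm[L^\infty]{v}$: one needs to sum the image series $\sum_{n\neq0}$ of Gaussians, show the sum converges and is dominated by a single Gaussian in $(D-x)$ times a constant independent of $D\ge 1$ and $\tau\in(0,T]$, and make sure no spurious $\tau^{-1}$-type singularity appears (the $n=0$ terms cancel exactly against the half-line kernel, so only the genuinely non-singular tail remains). The constant $8aT$ rather than a smaller multiple of $a$ in the exponent is what one gets from the crudest bound $|x+y+2nD|\ge D-x$ and $\tau\le T$ after completing the square; extracting a usable exponential from $\sum_{n\ge 1}e^{-((2n-1)D-x)^2/(4a\tau)}$ by factoring out $e^{-(D-x)^2/(8a\tau)}$ and bounding the remainder by a convergent geometric-type series is the one slightly delicate computation. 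Once that lemma-like estimate is in hand, both the deterministic and stochastic parts follow by the routine arguments above.
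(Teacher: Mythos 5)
Your proposal is correct and follows essentially the same route as the paper's proof: evaluation $\delta_x$ is moved inside the It\^o integral, the ONB of Assumption~\ref{ass:noise-kernel}\ref{ass:noise-kernel-iv} with summable sup-norms is used in the It\^o isometry, and the discrepancy between $\cU(\tau)$ and $\cS_D(\tau)R_{\R^+\to(0,D)}$ is controlled through the method-of-images representation of $\Phi_D$ together with the Gaussian tail bound $\int_x^\infty \Phi(y,t)\dd y \le e^{-x^2/(4at)}$ and $\tau \le T$, the initial-data term being handled by the same kernel estimate. Only take care that your single operator bound also accounts for the contribution of $v$ on $(D,\infty)$, which $\cS_D(\tau)R_{\R^+\to(0,D)}$ does not see at all; the paper isolates this as a separate term and bounds it by the same Gaussian tail estimate, so it causes no difficulty.
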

	
	\begin{proof}
		We assume for simplicity that $Y(0) = 0$ since the terms involving $Y(0)$ and $Y_D(0)$ can be estimated by similar arguments as below. 
		First, pointwise evaluation of $Y_D$ is admissible since the process
		takes values in $\dot{H}^{r}_D$ for all $r < \min(r_W,1/2) + 1$ by~\eqref{eq:heat-finite-domain-well-posed-hs-estimate}. Therefore, by the It\^o isometry, we have for any $x \in [0,D]$, 
		\begin{equation}
			\label{eq:prop:localization-heat:1}
			\begin{split}
				\bigE{\left|Y(t,x) - Y_D(t,x)\right|^2} &= \int^t_0 \norm[\cL_2(H_{q_W},\R)]{\delta_x \cS(t-s) - \delta_x \cS_D(t-s) R_{\R^+ \to (0,D)}}^2 \dd s \\
				&= \sum_{j = 1}^\infty \int^t_0 \left|\delta_x \cS(t-s)e_j - \delta_x \cS_D(t-s) R_{\R^+ \to (0,D)}e_j\right|^2 \dd s\\
				&= \sum_{j = 1}^\infty \int^t_0 \left|\delta_x \cS(s)e_j - \delta_x \cS_D(s) R_{\R^+ \to (0,D)}e_j\right|^2 \dd s
			\end{split}
		\end{equation}
		where $(e_j)_{j=1}^\infty$ is an arbitrary ONB of $H_{q_W}$. We choose the ONB in Assumption~\ref{ass:noise-kernel}\ref{ass:noise-kernel-iv}. For $j \in \N$, we use the kernel~\eqref{eq:halfline-heat-kernel} and make the split 
		\begin{align*}
			&\delta_x \cS(s)e_j - \delta_x \cS_D(s) R_{\R^+ \to (0,D)}e_j \\
			&\quad= \int_0^\infty \left(\Phi(x-y,s) - \Phi(x+y,s)\right) e_j(y) \dd y - \int_0^D \Phi_D(x,y,s) e_j(y) \dd y \\
			&\quad= \int_D^\infty \left(\Phi(x-y,s) - \Phi(x+y,s)\right) e_j(y) \dd y \\ &\qquad+ \int_0^D \left(\Phi(x-y,s) - \Phi(x+y,s) - \Phi_D(x,y,s)\right) e_j(y) \dd y =: \mathrm{I} + \mathrm{II}. 
		\end{align*}
		We note that
		\begin{equation*}
			\int^\infty_x \Phi(y,t) \dd y \le e^{-x^2/(4 a t)}
		\end{equation*}
		for $x \ge 0$ and $t > 0$. Applying this to the first term, it is bounded by
		\begin{equation*}
			|\mathrm{I}| \le \norm[L^\infty]{e_j} \left(\int^\infty_{D-x} \Phi(y,s) \dd y +  \int^\infty_{D+x} \Phi(y,s) \dd y \right) \le \norm[L^\infty]{e_j} \big(e^{-(D+x)^2/(4 a s)} + e^{-(D-x)^2/(4 a s)}\big) 
		\end{equation*}
		which in turn can be bounded by $2 \norm[L^\infty]{e_j} e^{-{(D-x)^2}/{(4 a T)}}$. For the term $\mathrm{II}$, we use
		the fact that
		\begin{equation*}
			\Phi_D(x,y,t) = \sum_{n=-\infty}^\infty \Phi(x+y + 2 n D,t) - \Phi(x-y + 2 n D,t), \, t > 0, x,y \in (0,D),
		\end{equation*}
		to rewrite it as 
		\begin{align*}
			\mathrm{II}&=\sum_{\substack{ n \in \Z \\ n \neq 0 }} \int_0^D \left(\Phi(x+y + 2 n D,s) - \Phi(x-y + 2 n D,s)\right) e_j(y) \dd y \\
			&= \sum_{\substack{ n \in \Z \\ n \neq 0 }} \int_{x + 2nD}^{x + (2 n+1)D} \Phi(y,s) e_j(y-x-2 n D) \dd y - \int_{-x-2nD}^{-x-(2 n -1)D} \Phi(y,s) e_j(y+x+2 n D) \dd y.
		\end{align*}
		Using the non-negativity and symmetry of $\Phi$, it is bounded by
		\begin{align*}
			|\mathrm{II}| \le 2 \norm[L^\infty]{e_j} \sum_{\substack{ n \in \Z \\ n \neq 0 }} \int^\infty_{x+2D} \Phi(y,t) \dd y \le 2 \norm[L^\infty]{e_j} e^{-(D+x)^2/(4 a s)} \le 2 \norm[L^\infty]{e_j} e^{-(D-x)^2/(4 a T)}.
		\end{align*}
		These estimates along with Assumption~\ref{ass:noise-kernel}\ref{ass:noise-kernel-iv} in~\eqref{eq:prop:localization-heat:1} finish the proof.
	\end{proof}
	
	We now introduce a fully discrete approximation of $Y_D$ based on finite elements and the backward Euler scheme. For this we consider a spatial $(x_j)_{j = 0, \ldots, N_h}$ as well as a temporal grid $(t_i)_{i = 0, \ldots, N_k}$ for $h, k \in (0,1]$. As before $x_j = j h$ and $t_i = i k$,  and we assume for simplicity that $N_h = D/h \in \N$ and $N_k = T/k \in \N$. We let $V_h$ be the space of piecewise linear functions on the spatial grid and $V_h^0$ the subspace of functions that are zero at $x = 0$ and $x = D$. Both are finite dimensional Hilbert subspaces of $L^2((0,D))$ when equipped with the topology of this space. The latter is a subspace of $H^1_0(D) := \{v \in H^1 : v(0) = v(1) = 0\}$. We define a discrete operator $\cA_h$ on $V_h^0$ by
	\begin{equation*}
		\inpro[L^2]{\cA_h u_h}{v_h} := -a\inpro[L^2]{u_h'}{v_h'}, \, u_h, v_h \in V_h^0.
	\end{equation*}
	We write $P^0_h: L^2 \to V^0_h$ for the orthogonal $L^2$-projection onto $V^0_h$ and define the interpolant $I_h: H^r \to V_h$ by $(I_h v)(x_j) := v(x_j)$ for $j \in \{0,\ldots,N_h\}$. The latter is bounded on $H^r((0,D))$ for $r > 1/2$.
	A discretization of $Y_D$ is now given by the $V^0_h$-valued sequence $Y^{h,k}_D = (Y^{h,k}_D(t_i))_{i = 0, \ldots, N_k}$ which is defined by $Y^{h,k}_D(0) = P^0_h Y_D(0)$ and 
	\begin{equation}
		\label{eq:fem-recursion}
		(I - k \cA_h) Y^{h,k}_D(t_{i+1}) = Y^{h,k}_D(t_{i}) + P^0_h I_h R_{\R^+ \to (0,D)}\Delta W^i, \, i = 0, 1, \ldots, N_k-1,
	\end{equation}
	where $I$ denotes the identity on $V^0_h$, otherwise. 
	
	The last term in~\eqref{eq:fem-recursion} is well-defined since $\delta_x$ is bounded on $H_{q_W}$ for all $x \in \R^+$. Therefore $I_h$ is well-defined on $H_{q_W}$ and
	\begin{equation*}
		\E[\norm[V_h]{I_h R_{\R^+ \to (0,D)} \Delta W^i}^2] = \E\left[\lrnorm[V_h]{\int^{t_{i+1}}_{t_i}I_h R_{\R^+ \to (0,D)}\dd W(s)}^2\right] = \norm[\cL_2(H_{q_W}((0,D)),V_h)]{I_h}^2 < \infty,
	\end{equation*}
	since $I_h$ has finite-dimensional range. Note that, since $\delta_x$ is bounded on $V_h$, we have similarly
	\begin{equation*}
		\Cov((I_h \Delta W^i)(x_m),(I_h \Delta W^i)(x_n)) = k \sum_{j = 1}^\infty \inpro[\R]{\delta_{x_m}I_h R_{\R^+ \to (0,D)} e_j}{\delta_{x_m}I_h R_{\R^+ \to (0,D)} e_j}
	\end{equation*}
	where $(e_j)_{j=1}^\infty$ is an ONB of $H_{q_W}(\R^+)$. Since $(R_{\R^+ \to (0,D)} e_j)_{j=1}^\infty$ is an ONB of $H_{q_W}((0,D))$, we obtain 
	\begin{equation*}
		\Cov((I_h \Delta W^i)(x_m),(I_h \Delta W^i)(x_n)) = k q_W(x_m,x_n).
	\end{equation*}
	Therefore, $I_h \Delta W^i$ may be obtained by sampling a Gaussian random vector in $\R^{N_h+1}$ with covariance matrix $\mathbf{C}$, having entries $\mathbf{C}_{i,j} = k q_W(x_i,x_j)$. For (weighted) Mat\'ern covariance functions this can be accomplished in log-linear complexity with respect to $N_h$ by using a circulant embedding algorithm (see~\cite{GKNSS18}) and multiplying by the weight function.
	
	We define an approximation $\cS_D^{h,k} = (\cS_D^{h,k}(t))_{t \ge 0}$ of the semigroup $\cS_D$ by 
	\begin{equation*}
		\cS_D^{h,k}(t) :=  \I_{\{0\}}(t) I + \sum^{\infty}_{i=0}  \I_{(t_i,t_{i+1}]}(t) (I - k \cA_h)^{i+1}.
	\end{equation*}
	where $\I$ is the indicator function. Here we have extended the temporal grid by $t_i = i k $ for $i \in \N$. This allows us to write~\eqref{eq:fem-recursion} in closed form by
	\begin{equation}
		\label{eq:fem-mild}
		Y^{h,k}_D(t_{n}) = \cS_D^{h,k}(t_n) P_h^0 Y_D(0) + \int^{t_n}_0 \cS_D^{h,k}(t_n-s) P^0_h I_h R_{\R^+ \to (0,D)} \dd W(s)
	\end{equation}
	for $t_n \le T$.
	Like $\cS_D$, this operator family has a scaling property
	\begin{equation}
		\label{eq:rescale-fd-semigroup}
		(\cS_D^{h,k}(t)v_h)(x) = \big(\cS_1^{h/D,k/D^2}(t/D^2)v(\cdot/D)\big)(x/D), \, x \in [0,D], t > 0.
	\end{equation}
	
	In order to derive a a pointwise-in-space error bound for $Y_D(t_{i})-Y^{h,k}_D(t_{i})$, we need some stability and error estimates for the finite element method. We collect them in the following lemma.
	
	\begin{lemma}
		\label{lem:linfty-estimates-finite-elements}
		In the setting outlined above, the following estimates hold:
		\begin{enumerate}[label=(\roman*)]
			\item \label{eq:projection-stability} $\sup_{\substack{h \in (0,1] \\ D > 0}} \norm[\cL(L^\infty((0,D)))]{P_h^0} < \infty,$
			\item \label{eq:interpolation-error}
			$\sup_{\substack{h \in (0,1] \\ D > 0}} h^{-\min(s,2)}\norm[\cL(H^s((0,D)),L^2((0,D)))]{(I-I_h)} < \infty$, $s \in (1/2,2],$
			\item\label{eq:semigroup-error-bound} $\sup_{\substack{h,k \in (0,1] \\ D, t > 0}}
			t^s (h^{2 s}  + k^s)^{-1} \norm[\cL(L^\infty((0,D)))]{\cS_D(t) - \cS_D^{h,k}(t) P_h^0} < \infty,$ $s \in [0,1]$, and
			\item \label{eq:disc-semigroup-singularity}
			$\sup_{\substack{h,k \in (0,1] \\ D, t > 0}} t^{r/2} \norm[\cL(L^2((0,D)))]{(-\cA_h)^{r/2} \cS_D^{h,k}(t) P_h^0 v} < \infty$.
		\end{enumerate}
	\end{lemma}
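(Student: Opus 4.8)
The plan is to prove the four estimates in Lemma~\ref{lem:linfty-estimates-finite-elements} largely by reduction to the known $D=1$ case via the rescaling identities~\eqref{eq:rescale-semigroup} and~\eqref{eq:rescale-fd-semigroup}, combined with classical finite element theory on a fixed interval. For~\ref{eq:projection-stability}, first observe that $\norm[\cL(L^\infty((0,D)))]{P_h^0}$ is invariant under the dilation $x \mapsto x/D$ that maps the grid on $(0,D)$ of width $h$ to the grid on $(0,1)$ of width $h/D$, since $L^\infty$-norms and piecewise linear structure are preserved; hence it suffices to bound $\norm[\cL(L^\infty((0,1)))]{P_h^0}$ uniformly over $h \in (0,1]$ (indeed over all mesh widths), which is the classical $L^\infty$-stability of the $L^2$-projection on quasi-uniform meshes, see e.g.\ the results of Douglas--Dupont--Wahlbin or Crouzeix--Thom\'ee. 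Estimate~\ref{eq:interpolation-error} is the standard interpolation error bound for piecewise linears; by the same rescaling it reduces to the fixed-domain statement, and for $s \in (1/2,2]$ one invokes the Bramble--Hilbert lemma together with the fact that $H^s((0,1)) \hookrightarrow C([0,1])$ makes $I_h$ well-defined, picking up the factor $h^{\min(s,2)}$ (the $\min$ reflecting that no approximation gain beyond $h^2$ is possible with linears).

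For~\ref{eq:semigroup-error-bound}, the plan is to combine the $L^2$-based smoothing/error estimates for the backward Euler finite element scheme with a parabolic $L^\infty$-smoothing argument. Using~\eqref{eq:rescale-semigroup} and~\eqref{eq:rescale-fd-semigroup} with the substitution $t \mapsto t/D^2$, $h \mapsto h/D$, $k \mapsto k/D^2$, the quantity $t^s(h^{2s}+k^s)^{-1}\norm[\cL(L^\infty((0,D)))]{\cS_D(t) - \cS_D^{h,k}(t)P_h^0}$ is dilation-invariant, so it suffices to treat $D = 1$ for all $t > 0$ and $h,k \in (0,1]$ (note: after rescaling the new $h,k$ may be smaller, but that only helps, and the fixed-domain estimate should be stated for all mesh widths). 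On $(0,1)$, the standard nonsmooth-data error estimate gives $\norm[\cL(L^2)]{\cS_1(t) - \cS_1^{h,k}(t)P_h^0} \lesssim t^{-s}(h^{2s} + k^s)$ for $s \in [0,1]$ (combining the spatially semidiscrete estimate $\lesssim t^{-s}h^{2s}$ with the time-discretization estimate $\lesssim t^{-s}k^s$, both classical, e.g.\ Thom\'ee's monograph); to pass from $L^2$ to $L^\infty$ one splits $\cS_1(t) - \cS_1^{h,k}(t)P_h^0 = [\cS_1(t/2) - \cS_1^{h,k}(t/2)P_h^0]\cS_1(t/2) + \cS_1^{h,k}(t/2)P_h^0[\cS_1(t/2) - \text{something}]$, or more cleanly writes the error operator as (error at time $t/2$) composed with $\cS_1(t/2)$ on one side and uses the $L^2 \to L^\infty$ smoothing $\norm[\cL(L^2,L^\infty)]{\cS_1(\tau)} \lesssim \tau^{-1/4}$ together with $L^\infty$-stability of $\cS_1^{h,k}$ and $P_h^0$ (from~\ref{eq:projection-stability}); the half-power of $t$ lost to smoothing is absorbed since we have $t^s$ to spare up to a harmless adjustment of constants, or one iterates the splitting to distribute the burden. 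The semigroup property of the exact semigroup and the discrete near-semigroup property of $\cS_1^{h,k}$ (it is exactly $(I - k\cA_h)^{-(i+1)}$ on $(t_i,t_{i+1}]$, so compositions behave well) make this bookkeeping routine.

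Estimate~\ref{eq:disc-semigroup-singularity} is the discrete analytic smoothing bound: $(-\cA_h)^{r/2}\cS_D^{h,k}(t)P_h^0$ acts, on the eigenbasis of $-\cA_h$, by multipliers $\mu^{r/2}(1+k\mu)^{-(i+1)}$ with $t \in (t_i,t_{i+1}]$, and one checks $\sup_{\mu > 0} \mu^{r/2}(1+k\mu)^{-(i+1)} \lesssim ((i+1)k)^{-r/2} \lesssim t^{-r/2}$ uniformly, using $t_i < t$ and $k \le 1$; this is again dilation-invariant via~\eqref{eq:rescale-fd-semigroup} so the constant is independent of $D$, and the bound is in $L^2$ so no $L^\infty$ technicalities arise. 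The main obstacle I anticipate is~\ref{eq:semigroup-error-bound}: one must produce an $L^\infty$ (rather than $L^2$) nonsmooth-data error estimate for the fully discrete scheme that is simultaneously (i) uniform in the domain size $D$ — handled by the scaling identities but requiring care that the rescaled mesh parameters stay in a regime where the fixed-domain theorem applies — and (ii) sharp in the power $s$ and in the singularity $t^{-s}$ at $t \to 0$, which forces the careful half-time splitting and the use of $L^\infty$-stability of $P_h^0$ from part~\ref{eq:projection-stability} and of the discrete semigroup. Verifying that the classical $L^\infty$ finite element error estimates (which are usually stated with a logarithmic factor in $h$ for linears) can be arranged to hold without the logarithm in the form needed here — or absorbing such a factor harmlessly — is the delicate point.
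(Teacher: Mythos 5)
Parts \ref{eq:projection-stability}, \ref{eq:interpolation-error} and \ref{eq:disc-semigroup-singularity} of your plan are essentially sound and close to what the paper does: there, \ref{eq:projection-stability} and \ref{eq:interpolation-error} are obtained by citing \cite{CT87} and \cite{DS80} together with the observation that the constants do not depend on the domain size. Your dilation argument is a legitimate substitute for \ref{eq:projection-stability}; for \ref{eq:interpolation-error} be aware that a global rescaling does not interact cleanly with the full $H^s((0,D))$-norm (its lower-order part scales with a different power of $D$ than the seminorm, so uniformity for all $D>0$ is not immediate from the fixed-domain statement), and the argument should be run locally, element by element, in the Bramble--Hilbert/Dupont--Scott style that you also mention. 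Your spectral-multiplier computation $\sup_{\mu>0}\mu^{r/2}(1+k\mu)^{-(i+1)}\lesssim t^{-r/2}$ for \ref{eq:disc-semigroup-singularity} is a perfectly good elementary replacement for the paper's citation of Thom\'ee plus interpolation.

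The genuine gap is in \ref{eq:semigroup-error-bound}. You propose to obtain the $\cL(L^\infty)$ nonsmooth-data bound from $L^2$ theory by a half-time splitting and the smoothing estimate $\norm[\cL(L^2,L^\infty)]{\cS_1(\tau)}\lesssim \tau^{-1/4}$, asserting that the quarter power of $t$ lost to smoothing can be absorbed. There is no slack to absorb it: the statement has singularity exponent exactly equal to the rate exponent $s$, and this sharpness is precisely what is used downstream — in the proof of Proposition~\ref{prop:fem-convergence} the bound is integrated as $\int_0^{t_i}(t_i-s)^{-2r_2}\dd s$ with rate $h^{2r_2}+k^{r_2}$ for $r_2$ up to $1/2$, so an extra factor $t^{-1/4}$ (or, if you instead trade the loss via Gagliardo--Nirenberg or inverse estimates, half a power of $h$) collapses the attainable rates back to essentially the Sobolev-embedding rates that Remark~\ref{rem:sobolev-embedding} explicitly improves upon. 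Your fallback of iterating the splitting does not repair this: each application either reintroduces the unknown $\cL(L^\infty)$-norm of the error (circular) or reintroduces the smoothing loss, and no finite or dyadic iteration removes it. Sharp, log-free maximum-norm nonsmooth-data estimates for the fully discrete backward Euler Galerkin scheme are genuinely harder theorems; the paper does not derive them but imports them, namely $L^\infty$-stability and the bound $\norm[L^\infty]{(\cS_1(t_n)-\cS_1^{h,k}(t_n)P_h^0)v}\lesssim t_n^{-1}(h^2+k)\norm[L^\infty]{v}$ from \cite[(1.14), (1.17), (1.21)]{CLT94}, transported to general $D$ via \eqref{eq:rescale-semigroup}, \eqref{eq:rescale-fd-semigroup} and \eqref{eq:D-heat-holder-singularity} (the latter also handling $t$ off the temporal grid), and then interpolated against stability to cover all $s\in[0,1]$. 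Note also that the uniform-in-$h,k,n$ $L^\infty$-boundedness of $\cS_1^{h,k}(t_n)P_h^0$, which you invoke, is itself one of these nontrivial cited results and does not follow from part~\ref{eq:projection-stability} alone. To complete \ref{eq:semigroup-error-bound} you must either cite such maximum-norm results directly or carry out a genuine max-norm error analysis; the $L^2$-plus-smoothing route cannot yield the estimate as stated.
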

	
	\begin{proof}
		The stability estimate~\ref{eq:projection-stability} is \cite[Theorem~1]{CT87}. This is stated for $D = 1$, but the proof holds also for general $D$ with no change in the bound. 
		
		A proof of the interpolation error~\ref{eq:interpolation-error} is given in~\cite[Example~3]{DS80} for domains $\cD \in \R^2$ and $r > 1$, but the argument is the same in our case, noting that $C$ does not depend on the size of $\cD$. The restriction $r>1$ is required in $\R^2$ for $I_h$ to be well-defined on $H^r(\cD)$, but $r > 1/2$ suffices in our case. The proof uses the Sobolev--Slobodeckij norm, but this is bounded by the norm of $H^r((0,D))$ and, crucially, the bound is uniform in $D$. 
		
		For the semigroup error bound~\ref{eq:semigroup-error-bound}, we first use~\ref{eq:projection-stability} along with~\cite[(1.17)]{CLT94} to obtain a constant $C < \infty$ such that for all $v \in L^\infty((0,1))$, $h,k \in (0,1]$ and $n \in \N$, $\norm[L^\infty]{\cS_1^{h,k}(t_n) P_h^0 v} \le C \norm[L^\infty]{v}$. By~\cite[(1.14) and (1.21)]{CLT94}, there is a constant $C < \infty$ such that for all $v \in L^\infty((0,1))$,  $n \in \N$, and $h,k \in (0,1]$, $$\norm[L^\infty]{(\cS_1(t_n) - \cS_1^{h,k}(t_n) P_h^0) v} \le C t_n^{-1}(h^2  + k )\norm[L^\infty]{v}.$$
		Combining these estimates with~\eqref{eq:rescale-semigroup}, \eqref{eq:D-heat-holder-singularity} and \eqref{eq:rescale-fd-semigroup} yields
		the existence of a constant $C > 0$ such that for all $t > 0$, $D > 0$, $h,k \in (0,1]$ and $v \in L^\infty((0,D))$, 
		$$\norm[L^\infty]{(\cS_D(t) - \cS_D^{h,k}(t) P_h^0) v} \le C t^{-1}( h^2  + k )\norm[L^\infty]{v}.$$
		The same rescaling argument yields stability of $\cS_D^{h,k}$ for general $D$, whence we obtain~\ref{eq:semigroup-error-bound}.
		
		Finally, the proof of~\ref{eq:disc-semigroup-singularity} is given in~\cite[Lemma~7.3]{T06} along with an interpolation argument (see, e.g., \cite[Theorem~A.4]{BZ00} for a justification), noting that the estimate of~\cite{T06} does not depend on the specific semigroup.
	\end{proof}

	We are now ready to prove a pointwise-in-space error bound for $Y_D(t_{i})-Y^{h,k}_D(t_{i})$.
	\begin{proposition}
		\label{prop:fem-convergence}
		Let the assumptions of Theorem~\ref{thm:heat-regularity} along with Assumption~\ref{ass:noise-kernel}\ref{ass:noise-kernel-iv} and Assumption~\ref{ass:noise-kernel-w} be satisfied. Then, for all $r_1 \in [0,1], r_2 \in [0,1/2)$ and $r_3 \in [0,\min(s_{W},2))$, there is a constant $C < \infty$ such that for all $D \ge 1$, $h,k \in (0,1]$, $i = 1, \ldots, N_k$, and $x \in [0,D]$,
		\begin{equation*}
			\E\left[ |Y_D(t_i,x) - Y_D^{h,k}(t_i,x)|^2 \right]^{1/2} \le C \left(t_i^{-r_1}(h^{2 r_1} + k^{r_1}) + h^{2r_2} + k^{r_2} + D h^{r_3} \right).
		\end{equation*}
	\end{proposition}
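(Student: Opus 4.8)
The plan is to subtract the mild formulation~\eqref{eq:fem-mild} of $Y^{h,k}_D$ from that of $Y_D$ and to decompose the error at $(t_i,x)$ into an initial-value part $\delta_x(\cS_D(t_i)-\cS_D^{h,k}(t_i)P^0_h)Y_D(0)$ and a stochastic-convolution part $\int_0^{t_i}\delta_x\big(\cS_D(t_i-s)R-\cS_D^{h,k}(t_i-s)P^0_h I_h R\big)\dd W(s)$ with $R:=R_{\R^+\to(0,D)}$, estimating the two separately. Throughout, $\delta_x$ may be pulled inside the stochastic integral since the relevant processes take values in spaces on which it is continuous (the spaces $\dot{H}^{\rho}_D$, $\rho>1/2$, for which Theorem~\ref{thm:heat-regularity} and~\eqref{eq:heat-finite-domain-well-posed-hs-estimate} give regularity, and $V^0_h$). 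For the initial-value part, $Y(0)$ is bounded under the hypotheses of Theorem~\ref{thm:heat-regularity} (as $Y(0)\in\dot H^{s+1}\hookrightarrow H^{s+1}(\R^+)$ with $s+1>1/2$), so $\|Y_D(0)\|_{L^\infty((0,D))}\le\|Y(0)\|_{L^\infty(\R^+)}$, and Lemma~\ref{lem:linfty-estimates-finite-elements}\ref{eq:semigroup-error-bound} with exponent $r_1\in[0,1]$ bounds $|\delta_x(\cS_D(t_i)-\cS_D^{h,k}(t_i)P^0_h)Y_D(0)|\le\|(\cS_D(t_i)-\cS_D^{h,k}(t_i)P^0_h)Y_D(0)\|_{L^\infty}\lesssim t_i^{-r_1}(h^{2r_1}+k^{r_1})$.

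For the stochastic part, the It\^o isometry and the kernel decomposition $q_W(x,y)=\sum_j e_j(x)e_j(y)$ for an ONB $(e_j)_{j=1}^\infty$ of $H_{q_W}(\R^+)$ give
\begin{equation*}
\E\Big[\big|\int_0^{t_i}\delta_x(\cS_D(\tau)R-\cS_D^{h,k}(\tau)P^0_h I_h R)\dd W(s)\big|^2\Big]=\sum_{j=1}^\infty\int_0^{t_i}\big|\delta_x(\cS_D(\tau)-\cS_D^{h,k}(\tau)P^0_h I_h)Re_j\big|^2\dd s,\quad(\tau=t_i-s).
\end{equation*}
I split the operator as $(\cS_D(\tau)-\cS_D^{h,k}(\tau)P^0_h)R+\cS_D^{h,k}(\tau)P^0_h(I-I_h)R$. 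For the first summand, Lemma~\ref{lem:linfty-estimates-finite-elements}\ref{eq:semigroup-error-bound} with exponent $r_2<1/2$ gives $|\delta_x(\cS_D(\tau)-\cS_D^{h,k}(\tau)P^0_h)Re_j|\lesssim\tau^{-r_2}(h^{2r_2}+k^{r_2})\|e_j\|_{L^\infty((0,D))}$; since $2r_2<1$ the factor $\tau^{-2r_2}$ is integrable over $(0,t_i)$ and $\sum_j\|e_j\|_{L^\infty((0,D))}^2\le\sum_j\|e_j\|_{L^\infty(\R^+)}^2<\infty$ by Assumption~\ref{ass:noise-kernel}\ref{ass:noise-kernel-iv}, so this summand contributes $h^{2r_2}+k^{r_2}$.

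The second summand, which must yield $Dh^{r_3}$, is the crux. Setting $A_\tau:=\cS_D^{h,k}(\tau)P^0_h(I-I_h)$ and using that $R$ is a metric surjection between the reproducing kernel Hilbert spaces $H_{q_W}(\R^+)$ and $H_{q_W}((0,D))$ (hence $RR^*=I$), one obtains the identity $\sum_j|(A_\tau Re_j)(x)|^2=[(A_\tau\otimes A_\tau)q_W](x,x)$, where $(A_\tau\otimes A_\tau)q_W$ denotes $A_\tau$ applied to $q_W|_{(0,D)^2}$ in each of its two arguments. The key is to apply the two interpolation operators before the two smoothing operators: from Assumption~\ref{ass:noise-kernel-w}, the norm-one extension operator of $H_{q_W}$, and $\|q_W(y,\cdot)\|_{H_{q_W}}^2=q_W(y,y)\le\|q_W\|_{L^\infty}$, one has $\sup_{y,D}\|q_W(y,\cdot)\|_{H^{s_W}((0,D))}<\infty$, so Lemma~\ref{lem:linfty-estimates-finite-elements}\ref{eq:interpolation-error} used in each argument yields $\|((I-I_h)\otimes(I-I_h))q_W\|_{L^2((0,D)^2)}\lesssim D^{1/2}h^{\min(s_W,2)}$ (the $D^{1/2}$ from integrating over the surviving argument). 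Combining Lemma~\ref{lem:linfty-estimates-finite-elements}\ref{eq:disc-semigroup-singularity} with the discrete Sobolev inequality $\|v_h\|_{L^\infty((0,D))}\lesssim\|(-\cA_h)^{\beta/2}v_h\|_{L^2((0,D))}$, $\beta\in(1/2,1)$, shows $\|\cS_D^{h,k}(\tau)P^0_h\|_{\cL(L^2((0,D)),L^\infty((0,D)))}\lesssim\tau^{-\beta/2}$; applying this in each argument and evaluating at $(x,x)$ gives $|[(A_\tau\otimes A_\tau)q_W](x,x)|\lesssim\tau^{-\beta}D\,h^{2\min(s_W,2)}$, whose integral over $(0,t_i)$ is finite as $\beta<1$. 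Thus this summand is $\lesssim D\,h^{2\min(s_W,2)}$, i.e.\ $\lesssim\sqrt D\,h^{\min(s_W,2)}\le D h^{r_3}$ for $D\ge1$ and any $r_3<\min(s_W,2)$. Adding the three contributions and bounding $t_i\le T$ completes the proof.

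The step I expect to be the main obstacle is exactly this last one: extracting the \emph{full} interpolation rate $h^{\min(s_W,2)}$ from the error caused by sampling $W$ pointwise on the grid rather than projecting it. A direct estimate that first bounds $\|(I-I_h)Re_j\|_{L^2}$ and sums over $j$ loses half a power of $h$ (there are $\sim D/h$ grid cells, each contributing an $L^2$-interpolation error of size $\sim h^{\min(s_W,2)}$), so one is forced to retain the parabolic smoothing of $\cS_D^{h,k}$ and exploit the tensor-product structure of the kernel identity; this is where the interplay of the per-column Sobolev regularity of $q_W$ (Assumption~\ref{ass:noise-kernel-w}) with the smoothing estimates is essential, with the secondary technical burden of keeping all constants uniform in the localization radius $D$.
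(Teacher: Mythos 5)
Your error decomposition, the treatment of the initial-value term, and the bound for the summand carrying $\cS_D-\cS_D^{h,k}P_h^0$ (via Lemma~\ref{lem:linfty-estimates-finite-elements}\ref{eq:semigroup-error-bound}, Assumption~\ref{ass:noise-kernel}\ref{ass:noise-kernel-iv} and integrability of $\tau^{-2r_2}$) coincide with the paper's argument. The gap is in the term carrying $I-I_h$, exactly the step you flag as the crux. Assumption~\ref{ass:noise-kernel-w} is only a continuous (not Hilbert--Schmidt) embedding of $H_{q_W}$ into $H^{s_W,\alpha}$, so it gives per-column regularity $\sup_y\norm[H^{s_W}((0,D))]{q_W(y,\cdot)}<\infty$ and nothing more; consequently, when you interpolate the kernel in both variables you can harvest the factor $h^{\min(s_W,2)}$ only once, the second application of $I-I_h$ being controlled merely by the uniform $H^{s_W}$-bound of the (interpolated) columns. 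Your own intermediate estimate reflects this: $\norm[L^2((0,D)^2)]{((I-I_h)\otimes(I-I_h))q_W}\lesssim D^{1/2}h^{\min(s_W,2)}$, one power. Feeding that through the $L^2\to L^\infty$ smoothing bound in each variable gives $[(A_\tau\otimes A_\tau)q_W](x,x)\lesssim\tau^{-\beta}D^{1/2}h^{\min(s_W,2)}$, not the $\tau^{-\beta}D\,h^{2\min(s_W,2)}$ you assert; and since this diagonal value is already the squared integrand in the It\^o isometry, your chain delivers only about $D^{1/4}h^{\min(s_W,2)/2}$ for the root-mean-square error of this term---half the claimed rate---so the conclusion $\lesssim D h^{r_3}$ for all $r_3<\min(s_W,2)$ does not follow. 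The silent jump from one to two powers of $h^{\min(s_W,2)}$ (and from $D^{1/2}$ to $D$) is a genuine gap, and it cannot be closed along this route: the $L^2((0,D)^2)$-norm of the doubly interpolated kernel really is only of order $D^{1/2}h^{\min(s_W,2)}$ without mixed (tensor) Sobolev smoothness of $q_W$ or a Hilbert--Schmidt embedding into $H^{s_W}$, neither of which is assumed. A secondary unproved point: the discrete Sobolev inequality $\norm[L^\infty((0,D))]{v_h}\lesssim\norm[L^2((0,D))]{(-\cA_h)^{\beta/2}v_h}$ is not uniform in $D$ (by scaling its constant grows like $D^{\beta-1/2}$); this costs only harmless powers of $D$, but you assert it without justification.

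For comparison, the paper gains the full rate at the level of operators acting on $H_{q_W}$ rather than through the kernel: after the Poincar\'e inequality $\norm[H^1((0,D))]{v_h}\lesssim D\norm[L^2]{(-\cA_h)^{1/2}v_h}$ and a Sobolev embedding, the term is reduced to the Hilbert--Schmidt norm of $(-\cA_h)^{(1/2+\epsilon)/2}\cS_D^{h,k}(t_i-s)P_h^0(I-I_h)R_{\R^+\to(0,D)}$ on $H_{q_W}$, which is then estimated via approximation and entropy numbers (properties \eqref{eq:operator-norm-property}--\eqref{eq:hs-norm-approximation-numbers}), the entropy decay of $H^{s_W,\alpha}\hookrightarrow H^{r_3}$ (this is precisely where Assumption~\ref{ass:noise-kernel-w} and the strict loss $r_3<s_W$ enter), the interpolation estimate of Lemma~\ref{lem:linfty-estimates-finite-elements}\ref{eq:interpolation-error}, the discrete smoothing of Lemma~\ref{lem:linfty-estimates-finite-elements}\ref{eq:disc-semigroup-singularity}, and the eigenvalue growth of $-\cA_h$; this is how the factor $D$ and the rate $h^{r_3}$ in the statement arise. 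The moral is that the second power of $h$ has to come from an operator bound on $H_{q_W}$, such as $\norm[\cL(H_{q_W}(\R^+),L^2((0,D)))]{(I-I_h)R_{\R^+\to(0,D)}}\lesssim h^{\min(s_W,2)}$ (which does follow from Assumption~\ref{ass:noise-kernel-w} together with Lemma~\ref{lem:linfty-estimates-finite-elements}\ref{eq:interpolation-error}), or from the paper's approximation-number machinery---not from the $L^2((0,D)^2)$-norm of $((I-I_h)\otimes(I-I_h))q_W$.
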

	
	\begin{proof}
		In an argument similar to the proof of Proposition~\ref{prop:holder}, we again use the fact that $\delta_x$ is continuous on $H^r((0,D))$ for any $r > 1/2$ when applying the It\^o isometry and then bound the integrands by the $L^\infty$ norm. With the mild formulas~\eqref{eq:mild-stochastic-heat} and~\eqref{eq:fem-mild}, we split the error into three parts, 
		\begin{align*}
			\E\left[ |Y_D(t_i,x) - Y_D^{h,k}(t_i,x)|^2 \right] &\lesssim \norm[L^\infty]{(\cS_D(t_i) - \cS_D^{h,k}(t_i) P_h^0) Y_D(0)}^2 \\
			&\quad+ \sum_{j=1}^\infty \int^{t_i}_0 \norm[L^\infty]{(\cS_D(t_i-s) - \cS_D^{h,k}(t_i-s) P_h^0) e_j}^2 \dd s \\
			&\quad+ \sum_{j=1}^\infty \int^{t_i}_0 \norm[L^\infty]{\cS_D^{h,k}(t_i-s) P_h^0 (I-I_h)R_{\R^+ \to (0,D)} e_j}^2 \dd s
		\end{align*}
		for any $x \in (0,D)$. Again, the ONB $(e_j)_{j=1}^\infty$ of $H_{q_W} = H_{q_W}(\R^+)$ is arbitrary. The first term on the right hand side is handled directly by~Lemma~\ref{lem:linfty-estimates-finite-elements}\ref{eq:semigroup-error-bound}. 
		
		For the second term, we choose the ONB of Assumption~\ref{ass:noise-kernel}\ref{ass:noise-kernel-iv} and use~Lemma~\ref{lem:linfty-estimates-finite-elements}\ref{eq:semigroup-error-bound} to see that it is bounded by a constant times 
		\begin{equation*}
			\sum_{j=1}^\infty \norm[L^\infty]{e_j}^2 (h^{2 r_2} + k^{r_2})^2 \int_{0}^{\infty} (t_i - s)^{-2 {r_2}} \dd r_2 \lesssim (h^{2 r_2} + k^{r_2})^2
		\end{equation*}
		for all $r_2 < 1/2$.
		
		For the third term we use the Poincar\'e inequality $$\norm[H^1((0,D))]{v_h} \lesssim D \norm[H^1_0]{v_h} = D \norm[L^2((0,D))]{A_h^{1/2}v_h}$$ for $v_h \in V_h^0$ along with an interpolation argument and a Sobolev embedding to obtain that for all $\epsilon > 0$ it is bounded by
		\begin{equation*}
			D^{1+2\epsilon} \int^{t_i}_0 \norm[\cL_2(H_{q_W},L^2)]{(-\cA_h)^{(1/2+\epsilon)/2} \cS_D^{h,k}(t_i-s) P_h^0 (I-I_h)R_{\R^+ \to (0,D)}}^2 \dd s.
		\end{equation*}
		We use the identity~\eqref{eq:hs-norm-approximation-numbers} to obtain a bound on the integrand in terms of approximation numbers. In order to make use of~\eqref{eq:multiplicative-property}, the sum is split into two parts:
		\begin{equation}
			\label{eq:prop:fem-convergence:pf:1}
			\begin{split}
				&\norm[\cL_2(H_{q_W},L^2)]{(-\cA_h)^{(1/2+\epsilon)/2} \cS_D^{h,k}(t_i-s) P_h^0 (I-I_h)R_{\R^+ \to (0,D)}}^2 \\ &= \sum^\infty_{j = 1} \psi_{2j-1}\left((-\cA_h)^{(1/2+\epsilon)/2} \cS_D^{h,k}(t_i-s) P_h^0 (I-I_h)R_{\R^+ \to (0,D)}\right)^2 \\
				&\quad+ \sum^\infty_{j = 1} \psi_{2j}\left((-\cA_h)^{(1/2+\epsilon)/2} \cS_D^{h,k}(t_i-s) P_h^0 (I-I_h)R_{\R^+ \to (0,D)}\right)^2.
			\end{split}
		\end{equation}
		For the first sum, we use~\eqref{eq:operator-norm-property}, \eqref{eq:multiplicative-property} and \eqref{eq:approximation-bound-by-entropy} to see that for all ${r_3} < s_W$
		\begin{align*}
			&\psi_{2j - 1}\left((-\cA_h)^{(1/2+\epsilon)/2} \cS_D^{h,k}(t_i-s) P_h^0 (I-I_h)R_{\R^+ \to (0,D)}\right) \\ &\quad\le 2 \psi_j\left((-\cA_h)^{(-1/2+2\epsilon)/2} \right) \norm[\cL(L^2)]{(-\cA_h)^{(1-\epsilon)/2} \cS_D^{h,k}(t_i-s) P_h^0} \norm[\cL(H^{r_3}, L^2)]{I-I_h} \\ &\qquad\times\norm[\cL(H^{r_3}(\R),H^{r_3}((0,D)))]{R_{\R \to (0,D)}} \epsilon_j(I_{H^{s_W,\alpha} \hookrightarrow H^{r_3}} ) \norm[\cL(H_{q_W}(\R),H^{s_W,\alpha}(\R))]{I_{H_{q_W} \hookrightarrow H^{s_W,\alpha}}} \\
			&\qquad\times \norm[\cL(H_{q_W}(\R^+),H_{q_W}(\R))]{E_{\R^+ \to \R}}.
		\end{align*}
		By Lemma~\ref{lem:linfty-estimates-finite-elements}\ref{eq:interpolation-error} and \ref{eq:disc-semigroup-singularity} along with H\"older's inequality for sequence spaces, therefore, it is bounded by a constant times 
		\begin{equation*}
			(t_i-s)^{\epsilon - 1} h^{2 \min({r_3},2)} \lrnorm[\ell^p]{\left(\psi_j\left((-\cA_h)^{(-1/2+2\epsilon)/2} \right)\right)_{j=1}^\infty}^{2}
			\lrnorm[\ell^q]{\left(\epsilon_j(I_{H^{s_W,\alpha} \hookrightarrow H^{r_3}} )\right)_{j=1}^\infty}^{2}
		\end{equation*}
		for all $p,q > 2$ such that $1/p + 1/q = 1/2$. As in the proof of Theorem~\ref{thm:noise-kernel}, for sufficiently small $s_W - {r_3} > 0$, $\epsilon_j(I_{H^{s_W,\alpha} \hookrightarrow H^{r_3}} ) \le j^{{r_3}-s_W}$. Letting $q > 1/(s_W-{r_3})$ yields $\left(\epsilon_j(I_{H^{s_W,\alpha} \hookrightarrow H^{r_3}} )\right)_{j=1}^\infty \in \ell^q$. By the min-max principle, the eigenvalues of $-\cA_h \in \cL(V^0_h)$ are bounded from below by the eigenvalues of $-\cA$. Since approximation numbers coincide with eigenvalues for self-adjoint operators on Hilbert spaces (see~\cite[Proposition~11.3.3]{P80}), it follows that 
		\begin{align*}
			\lrnorm[\ell^p]{\left(\psi_j\left((-\cA_h)^{(-1/2+2\epsilon)/2} \right)\right)_{j=1}^\infty}^{2} &\le \left(\sum_{j = 1}^\infty (a\pi^2 j^2 / D^2)^{p(-1/2+2\epsilon)/2}\right)^{2/p} \\&= a^{2\epsilon - 1/2}D^{1-4\epsilon} \pi^{4 \epsilon - 1} \left(\sum_{j = 1}^\infty j^{2\epsilon p-p/2}\right)^{2/p}. %
		\end{align*}
		Since $p >2$, this sum is finite as long as $\epsilon > 0$ is sufficiently small. The second sum in~\eqref{eq:prop:fem-convergence:pf:1} is handled in the same way, so that 
		\begin{equation*}
			\sum_{j=1}^\infty \int^{t_i}_0 \norm[L^\infty]{\cS_D^{h,k}(t_i-s) P_h^0 (I-I_h)R_{\R^+ \to (0,D)} e_j}^2 \dd s \lesssim D^2 h^{2\min({r_3},2)}. \qedhere
		\end{equation*}
	\end{proof}
	
	\begin{remark}
		The singularity $t_i^{-r_1}$ may be removed by assuming sufficient regularity of $Y_D$, cf.\ \cite[Theorem~3.6]{CLT94}. We do not discuss this further other than noting that this requires $Y(0,x) = 0$ for $x = D$. In practice this entails assuming that $Y(0)$ has compact support since we employ $Y_D$ as an approximation of $Y$ with increasing $D$.
	\end{remark}
	
	\begin{remark}
		\label{rem:weight-stationary-pointwise-discussion}
		Suppose that $Y(0) = 0$ and that $q_W$ is given by a weight-stationary kernel as in Theorem~\ref{thm:noise-kernel} with smoothness parameter $\sigma > 1/2$ and weight parameter $\alpha_W > 1/4$. Taking also Lemma~\ref{lem:noise-kernel} into account, the result of Proposition~\ref{prop:fem-convergence} reads: there is for all $r_2 \le 1/2$ and $r_3 \le \sigma$ a constant $C < \infty$ such that for all $D \ge 1$, $h,k \in (0,1]$, $i = 1, \ldots, N_k$, and $x \in [0,D]$,
		\begin{equation*}
			\E\left[ |Y_D(t_i,x) - Y_D^{h,k}(t_i,x)|^2 \right]^{1/2} \le C \left(h^{2r_2} + k^{r_2} + D h^{r_3} \right).
		\end{equation*}
		We see that the rate in time is essentially $1/2$ no matter the value of $\sigma$ while the rate in space varies between $1/2$ and $1$. If we had sampled the noise by $P_h^0 R_{\R^+ \to (0,D)} \Delta W$ directly in~\eqref{eq:fem-recursion} instead of employing $I_h$ we would see a convergence rate of $1$ in space no matter the value of $\sigma$. A covariance matrix of the elements in the vector corresponding to~$P_h^0 R_{\R^+ \to (0,D)} \Delta W$ can be computed and used for this purpose. There are two problems with this approach, however. First, the matrix involves integrals that must in general be computed numerically. This can be very expensive. Second, even for weighted Mat\'ern kernels, there are no theoretical guarantees that circulant embedding algorithms can be employed. Then more expensive methods, such as using a Cholesky decompositions and multiplying with the resulting dense matrix for sampling, must be employed.
	\end{remark}
	
	\begin{remark}
		\label{rem:sobolev-embedding}
		In the proof of Proposition~\ref{prop:fem-convergence}, we used a Sobolev embedding technique for the term involving $I_h$. Had we also used it for the other integral term, we could employ classical techniques (see, e.g., \cite[Theorem~1.1]{Y05}) combined with interpolation between error estimates in $\cS_D(t) - \cS_D^{h,k}(t) P_h^0$ measured in $L^2$ and $\dot{H}_D^1$ norms (cf.\ \cite[Lemma~3.12]{K14}) to obtain a convergence rate of essentially $h^{1/2} + k^{1/4}$. That is, assuming that no essential smoothness of the kernel in the Hilbert--Schmidt sense was available. This is the case if a weight-stationary kernel as in Theorem~\ref{thm:noise-kernel} with $\sigma = 1/2 + \epsilon$ and $\epsilon > 0$ small is used. From the previous remark, we see that this rate would be suboptimal in time. It would also be suboptimal in space if we were to sample the noise by $P_h^0 R_{\R^+ \to (0,D)} \Delta W$ directly.
	\end{remark}
	
	Traditionally, finite element approximations of the stochastic heat equation have been analyzed in $L^2$ norms in space, see, e.g., \cite{Y05,K14}. In the same setting as in Remark~\ref{rem:weight-stationary-pointwise-discussion}, suppose also for simplicity that $D = 1$. Then, an application of \cite[Lemma~3.12]{K12} and Lemma~\ref{lem:linfty-estimates-finite-elements}\ref{eq:interpolation-error} yields for all $r_2 \in [0,1/2 + \min(\sigma-1/2,1/2)/2)$ and $r_3 \in [0,\sigma)$, the existence of a constant $C < \infty$ such that for all $h,k \in (0,1]$, $i = 1, \ldots, N_k$,
	\begin{equation*}
		\E\left[ \norm[L^2]{Y_D(t_i) - Y_D^{h,k}(t_i)}^2 \right]^{1/2} \le C(h^{2 r_2} + k^{r_2} + h^{r_3}).
	\end{equation*}
	Compared to the results of Proposition~\ref{prop:fem-convergence}, we see that if $\sigma$ is close to $1/2$, the same rate is obtained. Otherwise, the rate can be as high as $3/2$ in space and $3/4$ in time. 	The reason for the potentially higher rate is that Assumption~\ref{ass:noise-kernel}\ref{ass:noise-kernel-iii} may be used with $r_W < \sigma - 1/2$ combined with~\eqref{eq:sobolev_id_D}. Since no such equivalence is available in our $L^\infty$ setting, a lower rate is obtained. This is however sharp, which we now demonstrate in simulations.
	
	\begin{figure}[ht!]
		\centering
		\includegraphics[width = \textwidth]{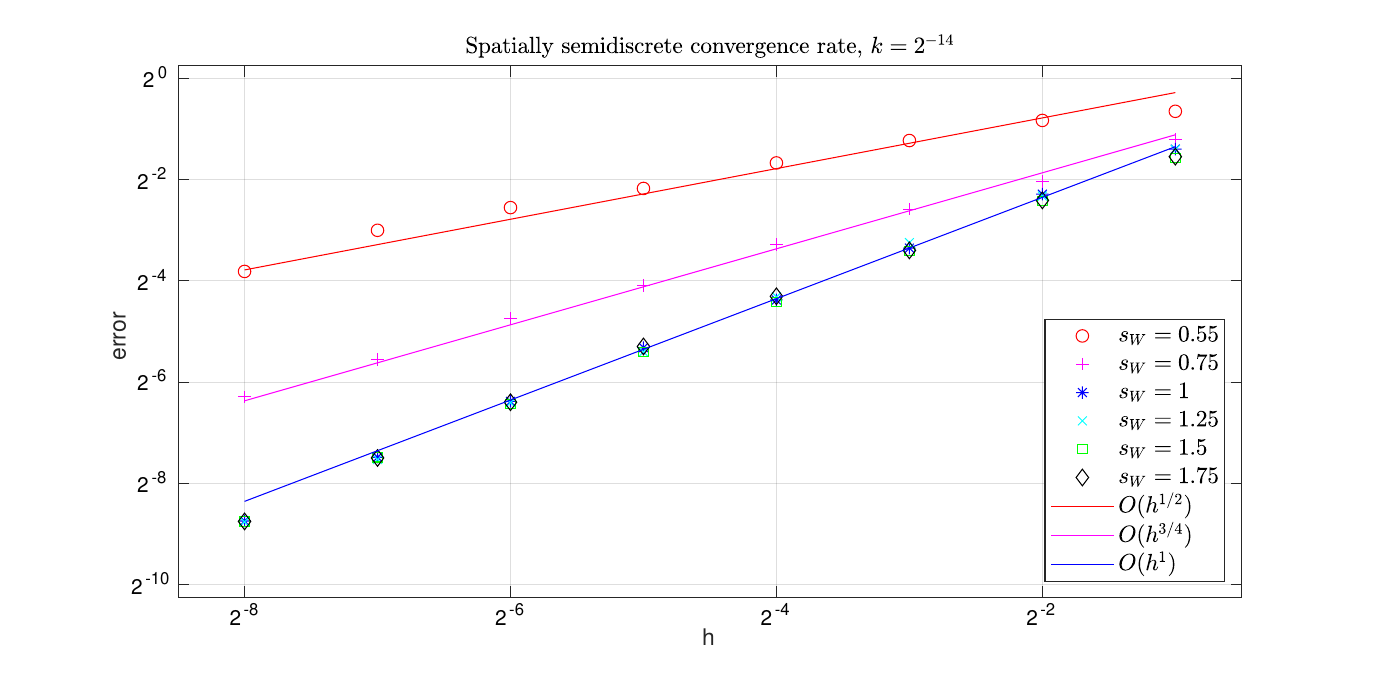}
		\caption{Errors $\max_{t_i,x} \E[|Y_D(t_i,x) - Y^{k,h}_D(t_i,x)|^2]^{1/2}$ for the discretized and localized stochastic heat equation and decreasing space step sizes $h$ with time step $k$ fixed and small.}
		\label{fig:spatially-semidiscrete-errors}
	\end{figure}
	
	\begin{figure}[ht!]
		\centering
		\includegraphics[width = \textwidth]{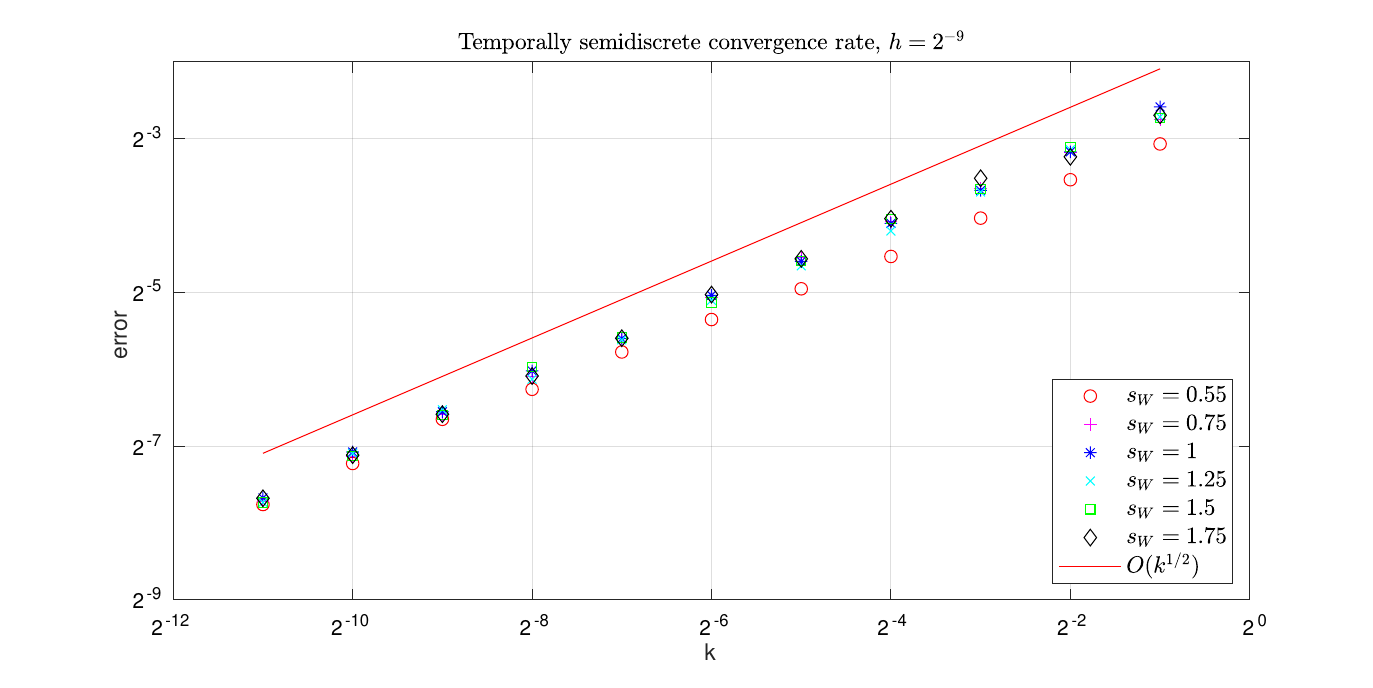}
		\caption{Errors $\max_{t_i,x}\E[|Y_D(t_i,x) - Y^{k,h}_D(t_i,x)|^2]^{1/2}$ for the discretized and localized stochastic heat equation and decreasing time step sizes $k$ with space step $h$ fixed and small.}
		\label{fig:temporally-semidiscrete-errors}
	\end{figure}
	
	We let $D = T := 1$, $Y_D := 0$ and $ a := 0.05$. We let the kernel $q_W$ of $W$ be of Mat\'ern class (see Remark~\ref{rem:matern}) with $\mu = \zeta := 1$ and increasing $\nu > 0$. This may be regarded as a weighted stationary kernel in Theorem~\ref{thm:noise-kernel} by choosing the weight function as a smooth bump function with large but finite support and equal to $1$ in a sufficiently large neighborhood of $0$. In this context, the results of Proposition~\ref{prop:fem-convergence} hold true with $s_W := \nu + 1/2$. We plot approximations of the errors
	\begin{equation}
		\label{eq:error-to-plot}
		\max_{\substack{i \in \{0,\ldots,N_k\} \\ x \in [0,D]}} \E[|Y_D(t_i,x) - Y^{k,h}_D(t_i,x)|^2]^{1/2}
	\end{equation} 
	based on Monte Carlo approximations with sample size $N = 100$. First, we fix $k = 2^{-14}$ and examine the spatial convergence rate for decreasing $h >0$. We replace $Y_D$ with a reference solution computed at $h = 2^{-10}$. As we see from Figure~\ref{fig:spatially-semidiscrete-errors}, the convergence rate increases from essentially $1/2$ when $s_W \approx 1/2$ to $1$ when $s_W \ge 1$. This is in line with the results of Theorem~\ref{thm:noise-kernel}. Next, we fix $h = 2^{-9}$ and examine the temporal convergence rate for decreasing $k > 0$. We replace $Y_D$ with a reference solution computed at $k = 2^{-14}$. We see from Figure~\ref{fig:temporally-semidiscrete-errors}, that the convergence rate stays constant no matter the value of $s_W$. This is again in line with the results of Theorem~\ref{thm:noise-kernel}.
	
	The sample size was chosen to ensure that the plots in Figures~\ref{fig:spatially-semidiscrete-errors} and~\ref{fig:temporally-semidiscrete-errors} were sufficiently stable. We used the same $N=100$ samples of $W$ across all values of $h$ for a given value of $s_W$. The samples across different $s_W$ were drawn independently of each other. The sample size $N=100$ does not yield a precise estimate of~\eqref{eq:error-to-plot}. However, we are only interested in observing the rate with which the error decrease as $h$ or $k$ tends to $0$, and in this sense the Monte Carlo error will never dominate, at least in a mean square sense and for fixed $t_i$, $x$. 
	For, with $E_{i,x} := |Y_D(t_i,x) - Y^{k,h}_D(t_i,x)|^2$, it follows from Proposition~\ref{prop:fem-convergence} that
	\begin{equation*}
		\E\Bigg[\Bigg| \Bigg( \frac{1}{N} \sum_{j = 1}^N E_{i,x}^{(j)}\Bigg)^{1/2} - \E[E_{i,x}]^{1/2} \Bigg|^2\Bigg] \le 4 \E\left[E_{i,x}\right] \lesssim \left(t_i^{-r_1}(h^{2 r_1} + k^{r_1}) + h^{2r_2} + k^{r_2} + D h^{r_3}\right)^{2},
	\end{equation*}
	where $(E_{i,x}^{(j)})_{j=1}^N$ are the independent samples of $E_{i,x}$ that make up the Monte Carlo estimate.
	
	\subsection{A fully discrete finite difference-finite element discretization of the HEIDIH model}
	\label{subsec:final-approx}
	
	We end this section on numerical approximation by combining the results of Propositions~\ref{prop:X-error-decomp}, \ref{prop:localization-heat} and~\ref{prop:fem-convergence}, thereby obtaining a fully discrete approximation result for the HEIDIH model in the case that $Z(t) := \eta$ for all $t \in [0,T]$.
	
	We employ the same finite difference method of Section~\ref{subsec:error-decomp} to define an approximation $X^{h,k}_D$ on a space-time grid with equal spatial and temporal step sizes $k \in (0,T]$.  Recalling~\eqref{eq:X-with-eta-simple-version}, this is defined by
	\begin{equation}
		\label{eq:fully-disc-recursion}
		X_D^{h,k}(t_{i+1},x_j) = X_D^{h,k}(t_{i},x_{j+1}) + \norm[H_{q_B}]{I^*_{H_{q_B} \hookrightarrow \cH^r} \eta} Y_D^{h,k}(t_{i},x_j) \Delta \tilde \beta^{i}
	\end{equation}
	for $t_i = i k \ge 0$ and $x_j = jk \ge 0$.
	Here $Y_D^{h,k}$ is the approximation of $Y_D$ defined by~\eqref{eq:fem-recursion} and $\tilde \beta$ the real-valued Wiener process defined in~\eqref{eq:beta-tilde} with $\Delta {\tilde \beta}^i = \tilde \beta(t_{i+1})-\tilde \beta(t_i)$. Note that $Y_D^{h,k}$ is computed on a space-time grid with temporal step size $k$ and spatial mesh size $h$ that may not be equal. Since $Y_D^{h,k}(t_i,\cdot)$ is a piecewise linear function on $[0,D]$, the scheme~\eqref{eq:fully-disc-recursion} is well-defined even if $h \neq k$.
	The closed form is given by
	\begin{equation}
		\label{eq:X-closed-form-approximation-eta}
		X_D^{h,k}(t_{n},x_j) = \delta_{x_j} \cS(t_{n}) X(0) + \norm[H_{q_B}]{I^*_{H_{q_B} \hookrightarrow \cH^r} \eta} \sum^{n-1}_{i=0} \int^{t_{i+1}}_{t_i}  Y_D^{h,k}(t_{i},x_j+t_n - t_{i+1})  \dd \tilde \beta(r).
	\end{equation}
	
	\begin{theorem}
		\label{thm:full-approximation}
		Let the conditions of Theorem~\ref{thm:transport-regularity} and Proposition~\ref{prop:fem-convergence} be satisfied. Then, for all $s_1 < 1/2, s_2 < \min(s_W,2)$ and $T \ge 1$ there is a constant $C < \infty$ such that for all $h,k \in (0,1]$ and $D \ge 2 T - k$,
		\begin{equation*}
			\max_{n,j \in \{0,\ldots,N_k\}} \E\left[ \left| X(t_n,x_j) - X^{h,k}_D(t_n,x_j) \right|^2 \right]^{1/2} \le C \left(k^{s_1} + h^{2 s_1} + D h^{s_2} + e^{-(D+k-2 T)^2/(8 a T)} \right).
		\end{equation*}
	\end{theorem}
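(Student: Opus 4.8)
The plan is to reduce the statement to the three results already established. \textbf{Step 1: error decomposition.} Apply Proposition~\ref{prop:X-error-decomp} in the special case $Z(t)=\hat Z(t)=\eta$, with $\hat Y=Y^{h,k}_D$ (extended off $(0,D)$ in any fixed bounded way so as to take values in $\cH^r$; only pointwise values at arguments $\le D$ will be used); the predictability and second-moment requirements are immediate since $t\mapsto Y^{h,k}_D(t)\otimes\eta$ is piecewise constant in time, adapted, and $Y^{h,k}_D(t_i)$ is a finite-dimensional Gaussian vector. Writing $\tau_r:=t_n+x_j-r$ and $\tau_i':=t_n+x_j-t_{i+1}$, this gives
\begin{equation*}
\bigE{\left|X(t_n,x_j)-X^{h,k}_D(t_n,x_j)\right|^2}=\norm[H_{q_B}]{I^*_{H_{q_B}\hookrightarrow\cH^r}\eta}^2\sum_{i=0}^{n-1}\int_{t_i}^{t_{i+1}}\bigE{\left|Y(r,\tau_r)-Y^{h,k}_D(t_i,\tau_i')\right|^2}\dd r .
\end{equation*}
Because $n,j\le N_k$ one has $0\le\tau_i'\le\tau_r\le 2T$, $\left|\tau_r-\tau_i'\right|=\left|t_{i+1}-r\right|\le k$ for $r\in[t_i,t_{i+1}]$, and $\tau_i'\le 2T-k\le D$; hence every spatial argument lies in $[0,D]$, so $Y^{h,k}_D(t_i,\cdot)$, being piecewise linear on $[0,D]$, may be evaluated there (even when $\tau_i'$ is not a node of the spatial grid for $Y$).

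\textbf{Step 2: triangle split.} Insert $Y(t_i,\tau_i')$ and $Y_D(t_i,\tau_i')$ and use the triangle inequality in $L^2(\Omega)$ to bound $\bigE{|Y(r,\tau_r)-Y^{h,k}_D(t_i,\tau_i')|^2}^{1/2}$ by the sum of three terms. The first, $\bigE{|Y(r,\tau_r)-Y(t_i,\tau_i')|^2}^{1/2}$, is estimated by splitting it further into a temporal increment, bounded by Proposition~\ref{prop:time-holder} (exponent arbitrarily close to $1/2$, and $|r-t_i|\le k$), and a spatial increment, bounded using $\sup_t\E[\norm[\dot{H}^{r'}]{Y(t)}^2]<\infty$, $r'<\rho+1$, from Theorem~\ref{thm:heat-regularity} together with the Sobolev embedding of $\dot{H}^{r'}$ into Hölder (indeed Lipschitz, since $\rho+1>3/2$) continuous functions, which gives an exponent $>1/2$ on the shift $|\tau_r-\tau_i'|\le k$; altogether this term is $\lesssim k^{s_1}$ for any $s_1<1/2$. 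The second term is the localization error, bounded via Proposition~\ref{prop:localization-heat} by $Ce^{-(D-\tau_i')^2/(8aT)}\le Ce^{-(D+k-2T)^2/(8aT)}$, using $\tau_i'\le 2T-k$, $D+k-2T\ge0$ and monotonicity of $\lambda\mapsto e^{-\lambda^2/(8aT)}$ on $[0,\infty)$. For $i\ge1$ the third term is precisely what Proposition~\ref{prop:fem-convergence} controls; choosing $r_1=r_2=s_1\in[0,1/2)$ and $r_3=s_2\in[0,\min(s_W,2))$ it is $\lesssim t_i^{-s_1}(h^{2s_1}+k^{s_1})+h^{2s_1}+k^{s_1}+Dh^{s_2}$. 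For $i=0$ the third term equals $|(I-P^0_h)Y_D(0)(\tau_0')|$, a deterministic quantity since $Y(0)$ and $Y_D(0)$ coincide on $(0,D)$; by the $L^\infty$-stability of $P^0_h$ (Lemma~\ref{lem:linfty-estimates-finite-elements}\ref{eq:projection-stability}) and $Y(0)\in\dot{H}^{s+1}\hookrightarrow L^\infty(\R^+)$ it is bounded by a constant independent of $h$ and $D$, so, since the interval $[t_0,t_1]$ has length $k$, the $i=0$ summand contributes at most $\lesssim k$ to the squared error, hence $\lesssim k^{1/2}\le k^{s_1}$ after taking square roots.

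\textbf{Step 3: reassembly.} Square the pointwise bounds, sum over $i=0,\dots,n-1$, and use $\sum_i\int_{t_i}^{t_{i+1}}\dd r=t_n\le T$. The first two contributions give $\lesssim k^{2s_1}$ and $\lesssim e^{-(D+k-2T)^2/(4aT)}$. For the third, the terms without the time singularity sum to $\lesssim(h^{2s_1}+k^{s_1}+Dh^{s_2})^2$, whereas the singular part produces the scalar factor
\begin{equation*}
\sum_{i=1}^{n-1}k\,t_i^{-2s_1}=k^{1-2s_1}\sum_{i=1}^{n-1}i^{-2s_1}\le C_{s_1}\,k^{1-2s_1}N_k^{\,1-2s_1}=C_{s_1}\,T^{\,1-2s_1},
\end{equation*}
which is finite precisely because $2s_1<1$, so the singular part of the third contribution is $\lesssim(h^{2s_1}+k^{s_1})^2$. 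Taking square roots, collecting terms and absorbing $\norm[H_{q_B}]{I^*_{H_{q_B}\hookrightarrow\cH^r}\eta}$, $T$, $a$, $s_1$ and $s_2$ into the generic constant yields the claimed bound, uniformly in $n,j\in\{0,\dots,N_k\}$.

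\textbf{Main obstacle.} The delicate point is the time singularity $t_i^{-r_1}$ delivered by Proposition~\ref{prop:fem-convergence}: its contribution to the error sum is $\sum_i k\,t_i^{-2r_1}$, summable only when $2r_1<1$, which is exactly what caps the final temporal rate at $s_1<1/2$ --- the space-time regularity of $Y$ invoked for the first term would, by itself, permit rates close to $1/2$ in time and $1$ in space. A secondary nuisance is the $i=0$ summand, where the mismatch between $Y_D(0)$ (which need not vanish at $x=D$) and the Dirichlet finite-element space $V^0_h$ is handled crudely, by exploiting the short length $k$ of the first time step rather than by a sharp projection estimate.
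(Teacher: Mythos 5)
Your proof follows essentially the same route as the paper's: the second decomposition of Proposition~\ref{prop:X-error-decomp} with $Z=\hat Z=\eta$, a triangle-inequality split of $Y(r,t_n+x_j-r)-Y^{h,k}_D(t_i,t_n+x_j-t_{i+1})$ into a space--time increment of $Y$ (Proposition~\ref{prop:time-holder} plus spatial Sobolev regularity), a localization error (Proposition~\ref{prop:localization-heat}), a fully discrete error for $i\ge 1$ (Proposition~\ref{prop:fem-convergence} with $r_1=r_2=s_1$, $r_3=s_2$), and a separately treated $i=0$ summand handled via the $L^\infty$-stability of $P_h^0$; your summation of the singular factor $t_i^{-2s_1}$, finite precisely because $2s_1<1$, is the paper's (implicit) computation spelled out. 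You are in fact more explicit than the paper about the spatial mismatch $|(t_n+x_j-r)-(t_n+x_j-t_{i+1})|\le k$, which the paper's displayed bound records only as a temporal increment at a fixed $x$.

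Two justification slips, both repairable. First, in Step~1 you cannot in general extend $Y^{h,k}_D(t_i,\cdot)$, a piecewise linear function with kinks, to an element of $\cH^r$ once $r\ge 3/2$ --- a regime the hypotheses allow and which the paper explicitly flags (``it might not be possible to extend $Y^{h,k}_D$ to $\cH^r$ if $r$ is big''); the paper instead notes that \eqref{eq:X-with-eta-simple-version} and \eqref{eq:X-closed-form-approximation-eta} are well-defined real-valued stochastic integrals, so the It\^o isometry yields the decomposition directly (alternatively, one may interpolate smoothly through the finitely many grid values of $Y^{h,k}_D$ that the scheme actually uses). Second, the claim that the spatial increment is Lipschitz ``since $\rho+1>3/2$'' is false in the present setting: the numerical section imposes Dirichlet conditions, so $\theta=1/2$ and $\rho=\min(r_W,\theta)\le 1/2$, whence $\rho+1\le 3/2$. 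What Theorem~\ref{thm:heat-regularity} and the Sobolev embedding actually give is a mean-square spatial H\"older exponent arbitrarily close to $\min(1,\rho+1/2)$; since for any $s_1<1/2$ one can choose an exponent in $(s_1,1/2)$, the required $k^{s_1}$ bound still follows, so only the stated reason, not the conclusion, is wrong.
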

	
	\begin{proof}
		While it might not be possible to extend $Y^{h,k}_D$ to $\cH^r$ if $r$ is big, the equations~\eqref{eq:X-with-eta-simple-version} and~\eqref{eq:X-closed-form-approximation-eta} are well-defined so that we may use the second error decomposition formula in Proposition~\ref{prop:X-error-decomp} and split the error by
		\begingroup
		\allowdisplaybreaks
		\begin{align*}
			&\bigE{\left|X(t_{n},x_{j}) - X^{h,k}_D(t_{n},x_{j})\right|^2} \\
			&\quad= \norm[H_{q_B}]{I^*_{H_{q_B} \hookrightarrow \cH^r} \eta}^2 \sum^{n-1}_{i=0} \int^{t_{i+1}}_{t_i} \bigE{\left|Y^{h,k}_D(t_i,t_{n} + x_{j}- t_{i+1})-Y(r,t_{n} + x_{j}-r)\right|^2} \dd r \\
			&\quad\lesssim \int^{k}_{0} \norm[L^\infty((0,D))]{(I-P^0_h)R_{\R^+\to(0,D)}Y(0)}^2 \dd r \\
			&\qquad+ \sum_{i=1}^{n-1} \int^{t_{i+1}}_{t_i} \E\left[ |Y_D^{h,k}(t_i,t_{n} + x_{j}- t_{i+1})-Y_D(t_i,t_{n} + x_{j}- t_{i+1})|^2 \right] \dd r \\
			&\qquad+ \sum_{i=0}^{n-1} \int^{t_{i+1}}_{t_i} \E\left[ |Y_D(t_i,t_{n} + x_{j}- t_{i+1})-Y(t_i,t_{n} + x_{j}- t_{i+1})|^2 \right] \dd r \\
			&\qquad+ \int^{k}_{0} \E\left[ |Y(0,t_{n} + x_{j}- t_{i+1})-Y(r,t_{n} + x_{j}- t_{i+1})|^2 \right] \dd r \\
			&\qquad+ \sum_{i=1}^{n-1} \int^{t_{i+1}}_{t_i} \E\left[ |Y(t_i,t_{n} + x_{j}- t_{i+1})-Y(r,t_{n} + x_{j}- t_{i+1})|^2 \right] \dd r \\
			&\qquad+ \sum_{i=0}^{n-1} \int^{t_{i+1}}_{t_i} \E\left[ |Y(r,t_n + x_j - t_{i+1})-Y(r,t_n + x_j - r|^2 \right] \dd r.
		\end{align*}%
		\endgroup
		The first and fourth term are bounded by a constant times $k$ by Lemma~\ref{lem:linfty-estimates-finite-elements}\ref{eq:projection-stability} along with Theorem~\ref{thm:heat-regularity} and a Sobolev embedding argument. The second is treated by Proposition~\ref{prop:fem-convergence} with $r_1 = r_2 = s_1$ and $r_3 = s_2$, the third by Proposition~\ref{prop:localization-heat} and the last two by Proposition~\ref{prop:holder}.
	\end{proof}
	
	\begin{remark}
		Note that this result provides a convergence estimate in a discrete $L^2$ norm defined by 
		$\norm[k]{v}^2 = \sum_{j = 0}^{N_k} k |v(x_j)|^2$ for functions $v$ on $(x_j)^{N_k}_{j=0}$.
		We have	
		\begin{equation*}
			\max_{n \{ 0, \ldots, N_k \}} \E[\norm[k]{X(t_n,\cdot) - X^{h,k}_D(t_n,\cdot)}^2]^{1/2} \le \max_{n,j \in \{0,\ldots,N_k\}} \E\left[ \left| X(t_n,x_j) - X^{h,k}_D(t_n,x_j) \right|^2 \right]^{1/2}.
		\end{equation*}
		Similarly, a (non-discrete) $L^2((0,D))$ bound can be obtained for $Y$ using~\eqref{prop:fem-convergence}. In that case, it is suboptimal for smooth noise, see the discussion after Remark~\ref{rem:sobolev-embedding}.
	\end{remark}
	
	\begin{figure}[ht!]
		\centering
		\includegraphics[width = \textwidth]{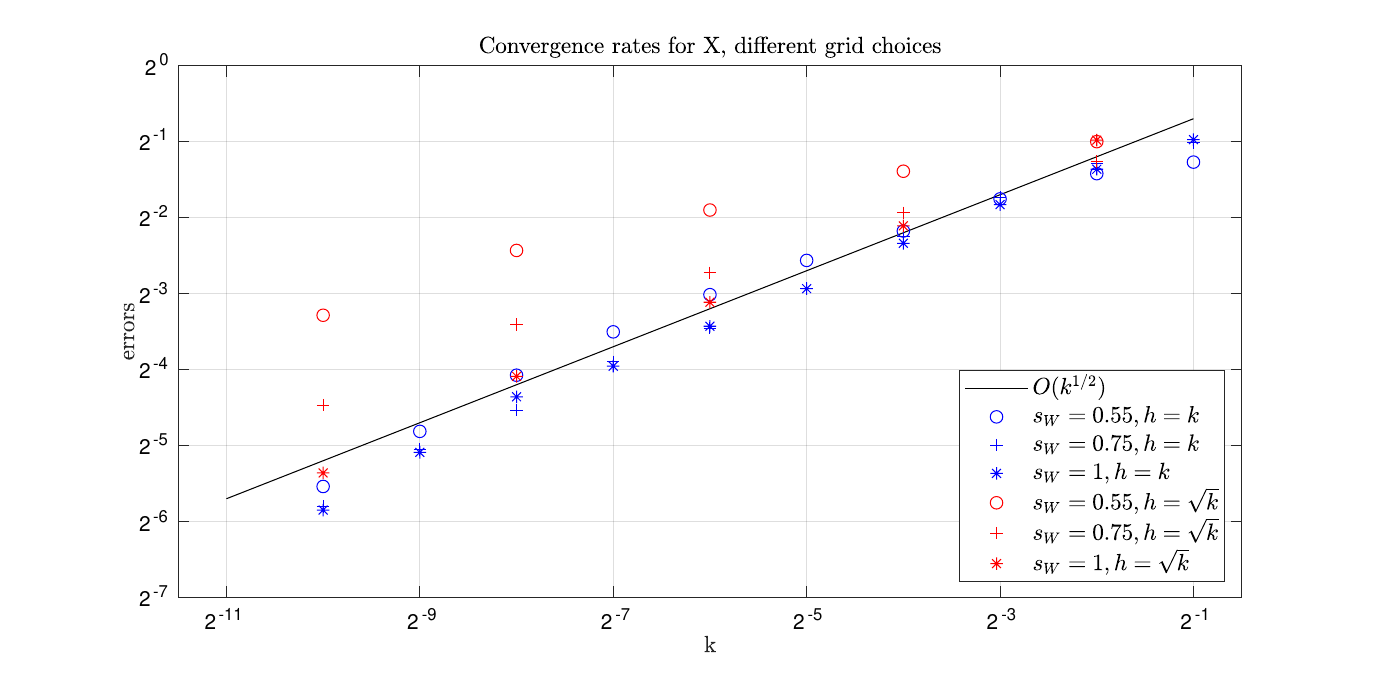}
		\caption{HEIDIH model errors $\max_{t_n,x_j}\E{[|X_D(t_n,x_j) - X^{k,h}_D(t_n,x_j)|^2]}^{1/2}$ for decreasing space and time mesh sizes. Here the approximation of $X$ is computed on a grid with equal space and time step sizes $k$, while $Y$ is computed with space step size $h$ equal to $k$ or $\sqrt{k}$.}
		\label{fig:temporalerror-comparison-plot}
	\end{figure}
	
	Suppose that $D$ is large, so that the error is dominated by the space and time discretization. The relationship between the spatial step size $h$ and time step size $k$ have important consequences for the efficiency of the algorithm. If $s_W$ is close to $1/2$, we must choose $h$ on the same order of magnitude as $k$ in order for the term $h^{s_2}$ not to dominate in the error estimate of Theorem~\ref{thm:full-approximation}. If $s_W \ge 1$, we can choose $h$ on the same order as $\sqrt{k}$ and still achieve a convergence rate of $1/2$ as $k \to 0$. We illustrate this point in Figure~\ref{fig:temporalerror-comparison-plot}, where we have let $T = 1$ and $a = 0.05$, and chosen $q_B$ and $\eta$ so that $\norm[H_{q_B}]{I^*_{H_{q_B} \hookrightarrow \cH^r}} = 1$. We have chosen $q_W$ as in Remark~\ref{rem:example-figure} with $q_s$ being a Matérn kernel with $\mu = \zeta = 1$. We let $\nu = s_W - 0.5$ range from $0.05$ to $0.5$ and plot the error 
	$$\max_{n,j \{ 0, \ldots, N_k \}}\E{[|X_D(t_n,x_j) - X^{k,h}_D(t_n,x_j)|^2]}^{1/2}$$
	for decreasing $k$ and choose either $h = k$ or $h = \sqrt{k}$. We have replaced $X_D(t_n,x_j)$ with a reference solution at $k = 2^{-11}$ in the first case, $k = 2^{-12}$ in the second, and used $N = 100$ samples in a Monte Carlo approximation of the error. We see that, as expected, the rate $1/2$ is achieved for all $s_W$ when $h = k$ but only for $s_W \ge 1$ when $h = \sqrt{k}$. If $s_W \ge 1$, it is important that the choice $h = \sqrt{k}$ is made, however, since this yields a much faster algorithm with the same convergence rate compared to $h = k$, see Figure~\ref{fig:comparison-times}.
	
	\begin{figure}[ht!]
		\centering
		\includegraphics[width = \textwidth]{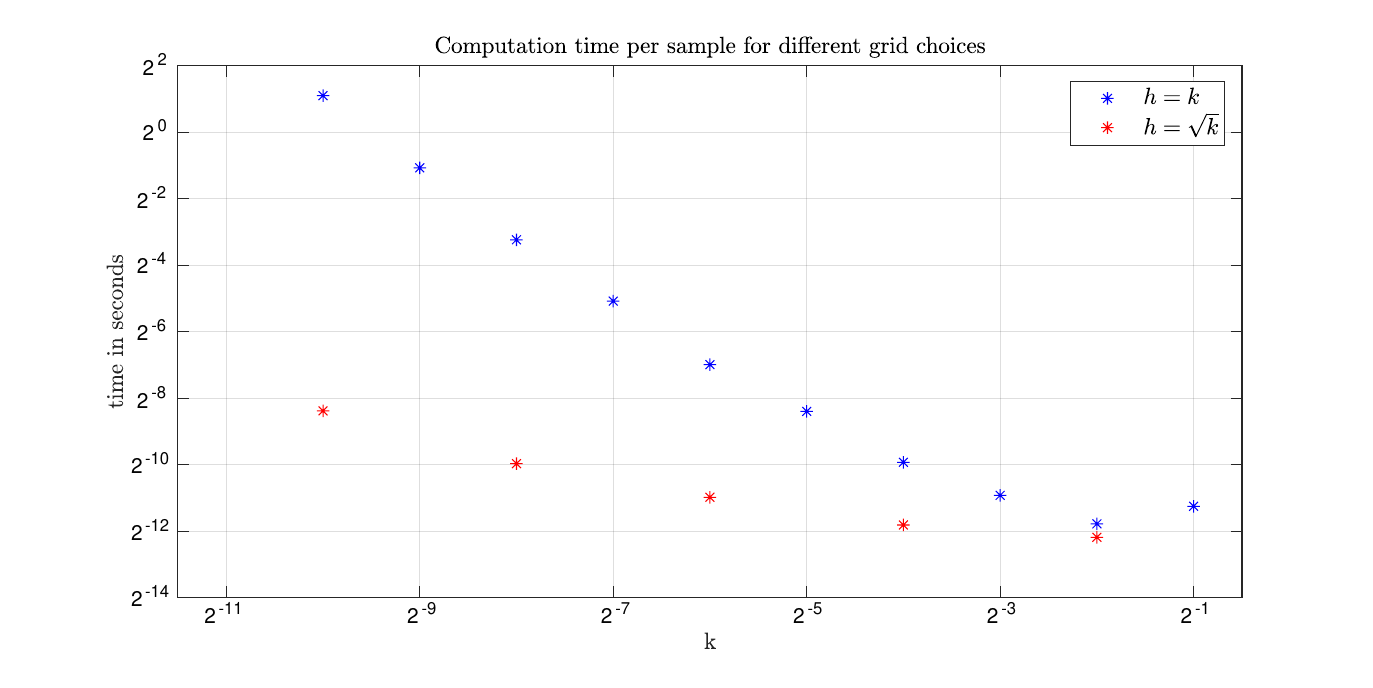}
		\caption{Computation time in seconds of one sample of $X^{k,h}_D$ for the different grid choices in Figure~\ref{fig:temporalerror-comparison-plot} for the case $s_W = 1$. The simulations were computed in MATLAB\textsuperscript{\tiny\textregistered} R2021b on an Intel\textsuperscript{\tiny\textregistered} Core\textsuperscript{\tiny\texttrademark} i7-10510U processor.}
		\label{fig:comparison-times}
	\end{figure}
	
	\section{Discussion and future research}
	\label{sec:conclusion}
	
	We end this paper with a brief discussion of our results with regard to novelty and future research. 
	
	The HEIDIH model was introduced as a special case of the infinite dimensional Heston stochastic volatility model of~\cite{BS18}. As far as we know, this paper is the first to consider numerical approximation of such a model, as well as providing a detailed regularity analysis. In particular, we focused on how regularity of the covariance kernels of the driving Wiener processes influence regularity of the model and gave concrete examples of covariance kernels that fit into our setting. A basic result for finite difference approximations that applies to all infinite dimensional Heston stochastic volatility models of forward prices was proven in Proposition~\ref{prop:X-error-decomp}. In the special case that $Z(t) = \eta$ for all $t \in [0,T]$ we were able to deduce and prove sharp convergence rates for a fully implementable and computationally efficient algorithm in Theorem~\ref{thm:full-approximation}. Future areas of research include the approximation of the HEIDIH model with other choices of $Z$, such as $Z(t) = Y(t)/\norm[\cH]{Y(t)}$ for $Y(t) \neq 0$. The main challenge then is to find an efficient way of computing the terms $\inpro[\cH^r]{\hat{Y}(t_i)}{\Delta B^i}$ and $\norm[\cH^r]{\hat{Y}(t_i)}$ in ~\eqref{eq:X-recursion}. Had $r = 0$ this could be solved by numerical integration, but for $r > 1/2$ the problem is harder. Another area for numerical development is other boundary conditions for $Y$. The Dirichlet condition was in our numerical simulations chosen due to the availability of suitable deterministic non-smooth data error estimates \cite{CLT94}. This led to relatively low convergence rates, as noted in Section~\ref{subsec:fem-section}.
	
	A large part of our numerical analysis was dedicated to finding sharp convergence rates for numerical approximations to the stochastic heat equation. There is a large body of literature dedicated to this topic, see~\cite{JK09} for a review of early results. In our setting, the situation is complicated by the fact that we need pointwise-in-space error estimates, that localization errors need to be taken into account and that sampling the Wiener process $W$ pointwise causes an additional error. 
	
	We are not aware of any pointwise-in-space error estimates for finite element approximations of the stochastic heat equation. However, for finite difference schemes there is a long tradition, starting with~\cite{G99} in the Walsh setting. The results of~\cite{G99} is for noise that is white in both space and time. We are not aware of any case for which noise colored by a concrete covariance kernel was considered for finite difference approximations. 
	
	Localization have been dealt with before in the Walsh setting, in~\cite{C22} for example. We showed how similar arguments can be adapted to the semigroup framework of \cite{DPZ14} considered in \cite{BS18}. 
	
	The question of how sampling the Wiener process $W$ pointwise affect the convergence of approximations of the stochastic heat equation has, as far as we know, barely been analyzed at all in the literature. In terms of finite elements it means replacing the $L_2$-projection $P^0_h$ with the interpolant $I_h$ in $P^0_h \Delta W^i = P^0_h (W(t_{i+1})- W(t_i))$ and the question of what effect this has is mentioned as open in~\cite[Section~6.2]{K14}. We showed in theory as well as by numerical simulations that there is a negative effect on the convergence if the noise is rough (i.e., if $H_{q_W} \cancel{\hookrightarrow} H^{s_W,\alpha}(\R)$ for $s_W \ge 1$) but not otherwise. After the submitting the first version of this paper, the result of replacing of $P_h^0$ with $I_h$ was analyzed in detail for a semi-linear SPDE with multiplicative noise on a bounded polygon \cite{LP22}.
	
	\bibliographystyle{abbrv}
	\bibliography{references}

\end{document}